\definecolor{bleu1}{RGB}{0,57,128}
\def\bleu1{\color{bleu1}}
\patchcmd{\section}{\normalfont}{\normalfont \bleu1}{}{}
\patchcmd{\subsection}{\normalfont}{\normalfont \bleu1}{}{}
\patchcmd{\subsubsection}{\normalfont}{\normalfont \bleu1}{}{}
\newtheorem{thm}{\bleu1Theorem}[section]
\newtheorem{prop}[thm]{\bleu1 Proposition}
\theoremstyle{plain}
\theoremstyle{definition}
\newtheorem{lem}[thm]{\bleu1 Lemma}
\newtheorem{defn}[thm]{\bleu1 Definition}
\newtheorem{claim}[thm]{\bleu1 Claim}
\newtheorem{rem}[thm]{\bleu1 Remark}
\numberwithin{equation}{section} 
\newcommand{\z}{\mathbb{Z}}
\newcommand{\ba}{\mathbf{a}}
\newcommand{\supp}{\mathrm{supp}}
\definecolor{lgray}{gray}{0.9}
\def\leq{\leqslant}
\def\geq{\geqslant}
\def\tilde{\widetilde}
\def\hat{\widehat}
\def\lk {\left}
\def\re {\right}
\renewcommand*{\backref}[1]{}
\renewcommand*{\backrefalt}[4]{\quad \tiny
  \ifcase #1 (\textbf{NOT CITED.})
  \or    (Cited on page~#2.)
  \else   (Cited on page~#2.)
  \fi}
\subjclass[2020]{37H05, 37B40.}
\title[Mean Li-Yorke chaos for random dynamical systems]{Mean Li-Yorke chaos along any infinite sequence for  infinite-dimensional  random dynamical systems}
\author{Chunlin Liu}
\address[Chunlin Liu]{CAS Wu Wen-Tsun Key Laboratory of Mathematics\\School of Mathematical Sciences\\ University of Science and Technology of China\\ Hefei, Anhui, 230026, P.R. China}
\email{lcl666@mail.ustc.edu.cn}
\author{Feng Tan}
\address[Feng Tan]{School of  Mathematical Sciences\\ South China Normal University \\  Guangzhou, GuangDong, 510631, P.R. China}
\email{tanfeng@scnu.edu.cn}
\author{Jianhua Zhang}
\address[Jianhua Zhang]{CAS Wu Wen-Tsun Key Laboratory of Mathematics\\School of Mathematical Sciences\\ University of Science and Technology of China\\ Hefei, Anhui, 230026, P.R. China}
\email{leapforg@mail.ustc.edu.cn}
\keywords{Infinite-dimensional random dynamical systems;  Mean Li-Yorke chaos; Positive entropy.}
\date{\today}
\subjclass[2020]{37H05, 37B40.}
\begin{document}

\begin{abstract}
 In this paper, we  study the mean Li-Yorke chaotic phenomenon along any infinite positive integer sequence for infinite-dimensional random dynamical systems. To be precise, we prove that if an injective continuous infinite-dimensional random dynamical system $(X,\phi)$ over an invertible ergodic Polish system $(\Omega,\mathcal{F},\mathbb{P},\theta)$ admits a $\phi$-invariant random compact  set $K$ with positive topological entropy, then given a positive integer sequence $\ba=\{a_i\}_{i\in\mathbb{N}}$ with $\lim_{i\to+\infty}a_i=+\infty$,   for $\mathbb{P}$-a.s. $\omega\in\Omega$ there  exists an  uncountable subset $S(\omega)\subset K(\omega)$ and $\epsilon(\omega)>0$ such that  for any two distinct points $x_1$, $x_2\in S(\omega)$ with following properties
  \begin{align*}
\liminf_{N\to+\infty}\frac{1}{N}\sum_{i=1}^{N} d\big(\phi(a_i, \omega)x_1, \phi(a_i, \omega)x_2\big)=0,\quad\limsup_{N\to+\infty}\frac{1}{N}\sum_{i=1}^{N} d\big(\phi(a_i, \omega)x_1, \phi(a_i, \omega)x_2\big)>\epsilon(\omega),
\end{align*}
where $d$ is a compatible complete metric on $X$.
\end{abstract}
\maketitle

\section{Introduction}


In the study of  dynamical systems, there is a fundamental question: how to describe the chaotic phenomenon.
In the past few decades, there are various results about this question, such as \cite{MR175106,MR2101573,MR1046376,MR213508,MR1874089,MR1146241,MR385028, MR1227094,MR0182020}. The reader can refer to \cite{MR3431162} for more aspects and  details.  In this paper, we focus on the 
relation between the positive entropy and  the mean Li-Yorke chaos in infinite-dimensional random dynamical systems.
In deterministic finite-dimensional dynamical systems,  Blanchard et al. \cite{BGKM} proved that the  positive entropy implies the Li-Yorke chaos; moreover, Downarowicz \cite{MR3119189} observed that the mean Li-Yorke chaos is equivalent to the  DC2 chaos and proved that the positive  entropy implies the mean Li-Yorke chaos.  For further study on the Li-Yorke chaos, see  \cite{BeBP,HLY,MR3788230,liu2022pinsker,WCLZ,WCZ}.

 Since about 1980,  random dynamical systems have attracted widespread attention. There is an extensive literature on random dynamical systems, for example \cite{A,K86,KiL,LY881,LY882,MR1369243,MR647807}. However,  little is known about the relation between the entropy and the Li-Yorke chaos  for  random dynamical systems. 
Until 2017,   the first result was obtained by Huang and Lu \cite{HL}. They showed that the positive entropy  implies the Li-Yorke chaos for almost sure fibers  in infinite-dimensional random dynamical systems. Recently, the second author \cite{Tan} proved 
 that the positive entropy  implies the DC2 chaos for almost sure fibers.
If the phase space is not compact, even though  two points are very close under iteration of a  map for most of the time, then the average of  distance of these two points under the iteration  is not necessarily small, since these two points may be far apart  under the iteration  for  the rest time. 
 Therefore,  the DC2 chaos may not imply the mean Li-Yorke chaos in infinite-dimensional random dynamical systems which is different from the case in  deterministic finite-dimensional dynamical systems.   

It is natural to ask  that whether the positive entropy implies  the mean Li-Yorke chaos in infinite-dimensional random dynamical systems. In this paper, we give a positive answer. The main difficulty in our proofs is  that  the phase space 
is not compact, even  not locally compact. This leads to the invalidation of many classical results in compact systems. By utilizing the regularity of probability measure on the Polish measurable space,  we show that there also are many asymptotic pairs  in non-compact systems (see Theorem \ref{infinity}). Besides, 
 we prove  that  the set of the  pairs  being separate in the mean sense along a given infinite positive integer sequence    has the full measure   under  the  product of  conditional measure with respect to the relative Pinsker factor in non-compact 
  systems with  positive entropy (see Theorem \ref{22-03-06-05}). Through these two theorems,  we show that the positive entropy  implies the mean Li-Yorke chaos for almost sure fibers in infinite-dimensional random dynamical systems.
 
To describe  the results of this paper precisely,  we present the basic setting. Assume that $(X,d)$ is a  complete separable metric space and $(\Omega,\mathcal{F},\mathbb{P},\theta)$ is an invertible ergodic Polish system (see Section \ref{2.1} for the  definition).  Consider the discrete random dynamical system over $(\Omega,\mathcal{F},\mathbb{P},\theta)$ defined by the measurable map 
  $$\phi:\mathbb{N}_0\times\Omega\times X\to X,\quad (n,\omega, x)\mapsto\phi(n, \omega, x)$$
where $\mathbb{N}_0=\mathbb{N}\cup\{0\}$, such that the map $\phi(n,\omega):=\phi(n,\omega,\cdot)$ for any  $\omega\in\Omega$ satisfies  the following properties: 
  \begin{enumerate}[(i)]
  \item $\phi(0,\omega)=\text{Id}_X$;
  \item $\phi(n+m,\omega)=\phi(n,\theta^m\omega)\circ \phi(m,\omega)$ for any $n,m\in\mathbb{N}_0$.
  \end{enumerate}
The pair $(X,\phi)$ is called a \emph{continuous (resp. an injective continuous) random dynamical system} over $(\Omega,\mathcal{F},\mathbb{P},\theta)$ if $\phi(n,\omega)$ is a   continuous (resp. an injective continuous) from $X$ to itself for any $n\in\mathbb{N}$ and $\mathbb{P}$-a.s. $\omega\in\Omega$.
  
   A multifunction $K=\{K(\omega)\}_{\omega\in\Omega}$  is called a \emph{random closed (resp. compact) set} if  $\mathbb{P}$-a.s. $\omega\in\Omega$, $K(\omega)$  is a nonempty closed (resp. compact) set of $X$ and 
  $$\omega\mapsto\inf_{y\in K(\omega)}d(x,y)$$
 is  measurable for any $x\in X$.   In this paper, $K$ will also be  regarded as  a subset of $\Omega\times X$ by $\{(\omega,x): x\in K(\omega)\}$.  A random closed set $K$ is said to be $\phi$-invariant  if  $\phi(n, \omega)K(\omega)=K(\theta^n\omega)$ for  any $n\in\mathbb{N}$ and $\omega\in\Omega$.

The first main result of this paper is stated as follows.
\begin{thm}\label{main2}
	Let $(X,\phi)$ be  a  continuous random dynamical systems   over an invertible ergodic  Polish system $(\Omega,\mathcal{F},\mathbb{P},\theta)$ and  $K\subset \Omega\times X$ be a $\phi$-invariant random compact  set. 
	 If the topological entropy  $h_{top}(\phi, K)>0$ of $(K,\phi)$ (see Section \ref{22-11-28-01}), then $(K,\phi)$ is mean Li-Yorke chaotic for almost sure fibers. Namely,  for $\mathbb{P}$-a.s. $\omega\in\Omega$ there exists a Mycielski set (i.e., a union of countably many Cantor sets)  $S(\omega)\subset K(\omega)$ and $\epsilon(\omega)>0$  such that for any distinct points $x_1$, $x_2\in S(\omega)$ with following properties
  \begin{align*}
&\liminf_{N\to+\infty}\frac{1}{N}\sum_{i=1}^{N} d\big(\phi(i, \omega)x_1, \phi(i, \omega)x_2\big)=0,
\\&\limsup_{N\to+\infty}\frac{1}{N}\sum_{i=1}^{N} d\big(\phi(i, \omega)x_1, \phi(i, \omega)x_2\big)>\epsilon(\omega).
\end{align*}
\end{thm}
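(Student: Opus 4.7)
The strategy is to combine the two auxiliary ``many pairs'' results highlighted in the introduction with a measurable Mycielski extraction. To start, I would use the positivity of $h_{top}(\phi,K)$ together with the (relative) variational principle to produce a $\phi$-invariant Borel probability measure $\mu$ on $K$ of positive fiber entropy. Disintegrating $\mu$ over $(\Omega,\mathcal{F},\mathbb{P})$ gives fiber measures $\{\mu_\omega\}$ with $\mathrm{supp}(\mu_\omega)\subset K(\omega)$, and further disintegrating these over the relative Pinsker $\sigma$-algebra $\mathcal{P}_\mu$ yields conditional probabilities $\{\mu_\omega^{\mathcal{P}}\}$ which, by positivity of the entropy, are non-atomic (hence supported on perfect sets) for $\mathbb{P}$-a.e.\ $\omega$.

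With this setup in place I would invoke \fullref{22-03-06-05} applied to the sequence $a_i=i$ to obtain a positive measurable function $\omega\mapsto\epsilon(\omega)$ and, for $\mathbb{P}$-a.e.\ $\omega$, a $(\mu_\omega^{\mathcal{P}}\times\mu_\omega^{\mathcal{P}})$-conull set of pairs $(x_1,x_2)\in K(\omega)^2$ satisfying
\[
\limsup_{N\to\infty}\frac{1}{N}\sum_{i=1}^N d\bigl(\phi(i,\omega)x_1,\phi(i,\omega)x_2\bigr)>\epsilon(\omega).
\]
Dually, \fullref{infinity} produces a $(\mu_\omega^{\mathcal{P}}\times\mu_\omega^{\mathcal{P}})$-conull set of mean-asymptotic pairs, i.e.\ pairs with $\liminf_{N\to\infty}\frac{1}{N}\sum_{i=1}^N d(\phi(i,\omega)x_1,\phi(i,\omega)x_2)=0$. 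Intersecting these two conull sets and deleting the diagonal (null because $\mu_\omega^{\mathcal{P}}$ is non-atomic) gives, for a.e.\ $\omega$, a conull symmetric Borel set $C(\omega)\subset K(\omega)^2$ whose elements are mean Li-Yorke pairs with modulus $\epsilon(\omega)$.

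To upgrade this ``large-in-measure'' set to an uncountable, indeed Mycielski, set of mutually chaotic points, I would work on the perfect Polish space $Y(\omega):=\mathrm{supp}(\mu_\omega^{\mathcal{P}})\subset K(\omega)$. Both the condition $\liminf=0$ and the condition $\limsup>\epsilon(\omega)$ define $G_\delta$ subsets of $Y(\omega)^2$, and being conull for the square of a fully supported measure forces them to be dense; hence $C(\omega)\cap Y(\omega)^2$ contains a dense $G_\delta$ symmetric set. Mycielski's theorem then extracts a Cantor set in $Y(\omega)\subset K(\omega)$ any two distinct points of which form a pair in $C(\omega)$, and a countable union of such extractions delivers the desired Mycielski set $S(\omega)$. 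Measurability of the map $\omega\mapsto S(\omega)$ is arranged by a Kuratowski--Ryll-Nardzewski selection applied to the naturally measurable multifunction of admissible Cantor sets, so that the fiberwise statement upgrades to the ``for $\mathbb{P}$-a.s.\ $\omega$'' formulation of the theorem.

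The hard part is precisely the obstruction flagged by the authors: since $X$ is not (locally) compact, proximity ``most of the time'' does not force small average distance, so DC2 does not automatically upgrade to mean Li-Yorke as it does in the deterministic compact setting. Overcoming this is the content of \fullref{infinity}, whose proof must exploit the inner regularity of Borel probability measures on Polish spaces to extract compact sets on which mean-asymptotic pairs can actually be built. Matching this output with the separation statement of \fullref{22-03-06-05} fiberwise, and performing the Mycielski extraction uniformly in $\omega$ while preserving measurability of $\epsilon(\omega)$ and $S(\omega)$, is the most delicate bookkeeping I expect in the argument.
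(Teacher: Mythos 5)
There is a genuine gap: your argument applies Theorem~\ref{infinity} and Theorem~\ref{22-03-06-05} directly to the skew product $(K,\mathcal{K},\mu,\Phi)$, but both of these theorems are stated (and proved, via Lemma~\ref{partition} and the relative Pinsker machinery) for \emph{invertible} MDSs. In Theorem~\ref{main2} the random system is only assumed continuous, not injective, so $\Phi$ need not be invertible on $K$; invertibility is exactly the extra hypothesis that makes your outline work as the proof of Theorem~\ref{main1}. The paper's proof of Theorem~\ref{main2} instead passes to the natural extensions $\bar{K}$ over $\bar{\Omega}$, uses Lemma~\ref{GAR} together with Lemma~\ref{na=zero} to check that the relative entropy over $\bar{\pi}$ is still positive, runs the stable-set and separating-pair theorems upstairs in the sequence space with the product metric $\rho_2$, and then projects the resulting Mycielski set to the zeroth coordinate. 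That last step is the real work you are missing: one must show the projection is injective on the chaotic set and transfer the mean $\limsup$ separation from the weighted metric on $X^{\mathbb{Z}}$ back to $d$ on $X$ by a bounded-shift estimate (truncating at $|i|\leq L$ with $2^{-L}<\epsilon_0/6$, which produces the final constant $\epsilon_0/36$). This deviation argument is precisely why the natural-extension route only works for the full sequence $a_i=i$ and why general sequences require injectivity, as the paper remarks at the start of Section~\ref{22-03-08-03}.

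A second, repairable error: you assert that Theorem~\ref{infinity} produces a $\bigl(\mu_\omega^{\mathcal{P}}\times\mu_\omega^{\mathcal{P}}\bigr)$-conull set of mean-asymptotic pairs. It does not, and this is false in general: for a system whose Pinsker conditional measures behave like a Bernoulli factor, the relatively independent self-joining gives almost every pair an average distance bounded away from zero, so the set $\{\liminf_N \frac1N\sum_{i\leq N} d(T^i x_1,T^i x_2)=0\}$ is typically \emph{null} for $\mu_y\times\mu_y$. What Theorem~\ref{infinity} gives is topological density of $W^{s}(x,T)\cap\supp(\mu_{\pi_1(x)})$ in $\supp(\mu_{\pi_1(x)})$; since products of stable sets consist of asymptotic pairs, this yields density (not conullness) of the $G_\delta$ set defined by the $\liminf$ condition, which is what the paper uses before intersecting with the dense $G_\delta$ set coming from Theorem~\ref{22-03-06-05} (where conull does imply dense) and applying Lemma~\ref{Myc}. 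Your Mycielski extraction would survive with density in place of conullness, but as written the intermediate "conull intersection" step is not valid. The measurable-selection remark is unnecessary: the theorem only asks for existence of $S(\omega)$ and $\epsilon(\omega)$ for $\mathbb{P}$-a.s.\ $\omega$, which the paper obtains by choosing, for a.e.\ $\omega$, some $y$ in the fiber of the Pinsker factor above $\omega$.
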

Furthermore, the  Li-Yorke  chaos along a given sequence has been investigated in the deterministic dynamical systems. For example,  Huang, Li and Ye \cite{HLY1} showed that the positive entropy implies the chaos along any infinite sequence, and 
the reader can see \cite{MR3788230,liu2022pinsker} for more results. Inspired by the above works,   we prove that an injective continuous infinite-dimensional random dynamical systems with positive  entropy is mean Li-Yorke chaotic along any infinite sequence for almost sure fibers. Specifically, 
\begin{thm}\label{main1}
Let $(X,\phi)$ be an injective  continuous random dynamical systems over an invertible ergodic Polish system $(\Omega,\mathcal{F},\mathbb{P},\theta)$ and  $K\subset \Omega\times X$ be a $\phi$-invariant random compact  set.  
If the topological entropy  $h_{top}(\phi, K)>0$ of $(K,\phi)$, then given an  infinite positive integer sequence  $\ba=\{a_i\}_{i\in\mathbb{N}}$ $($i.e. $\lim_{i\to+\infty}a_i=+\infty)$, $(K,\phi)$ is mean Li-Yorke chaotic along $\ba$ for almost sure fibers. Namely, for $\mathbb{P}$-a.s. $\omega\in\Omega$ there exists a Mycielski set  $S(\omega)\subset K(\omega)$ and $\epsilon(\omega)>0$  such that for any distinct points $x_1$, $x_2\in S(\omega)$ with following properties
	\begin{align}
	\label{22-03-09-01}&\liminf_{N\to+\infty}\frac{1}{N}\sum_{i=1}^{N} d\big(\phi(a_i, \omega)x_1, \phi(a_i, \omega)x_2\big)=0,
	\\\label{22-03-09-02}&\limsup_{N\to+\infty}\frac{1}{N}\sum_{i=1}^{N} d\big(\phi(a_i, \omega)x_1, \phi(a_i, \omega)x_2\big)>\epsilon(\omega).
	\end{align}
\end{thm}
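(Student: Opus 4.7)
The plan is to reduce the statement to a measure-theoretic assertion on pairs and then extract a Mycielski set fiberwise, leveraging the two auxiliary theorems flagged in the introduction. First I would invoke the variational principle for random dynamical systems to produce a $\phi$-invariant Borel probability measure $\mu$ supported on $K$ with $h_\mu(\phi) > 0$, let $\mathcal{P}_\mu$ denote the relative Pinsker $\sigma$-algebra, and form the fiberwise self-product $\lambda := \mu \times_{\mathcal{P}_\mu} \mu$. Disintegrating $\lambda$ over $\Omega$ yields, for $\mathbb{P}$-a.e.\ $\omega$, a probability measure $\lambda_\omega$ on $K(\omega) \times K(\omega)$ that is not concentrated on the diagonal, since $h_\mu(\phi)>0$ forces $\mathcal{P}_\mu$ to be a strictly smaller sub-$\sigma$-algebra than the full one.

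Next I would combine the two main technical inputs. Theorem \ref{infinity} provides that the set of pairs $(x_1,x_2)$ whose forward orbits are asymptotic has full measure under $\lambda$; a fortiori, for any infinite sequence $\ba=\{a_i\}$ with $a_i\to\infty$ such pairs satisfy the liminf condition \eqref{22-03-09-01}. Theorem \ref{22-03-06-05}, which is tailored to the prescribed sequence $\ba$, then yields a full-measure set of pairs satisfying \eqref{22-03-09-02} for some random $\epsilon(\omega)>0$. Intersecting the two full-measure sets and disintegrating, I obtain, for $\mathbb{P}$-a.e.\ $\omega$, a Borel set $R(\omega)\subset K(\omega)\times K(\omega)$ of positive $\lambda_\omega$-measure, disjoint from the diagonal, each of whose pairs simultaneously verifies \eqref{22-03-09-01} and \eqref{22-03-09-02}.

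Finally I would apply a measurable Mycielski-type extraction fiberwise: using inner regularity of $\lambda_\omega$ on the Polish space $X$, approximate $R(\omega)$ by a compact subset of positive $\lambda_\omega$-measure, run the classical Mycielski construction to build an uncountable Cantor set $S(\omega)\subset K(\omega)$ with $S(\omega)\times S(\omega)\setminus\Delta_{K(\omega)}\subset R(\omega)$, and verify measurability in $\omega$ of the selection (as in \cite{HL,Tan}). The main obstacle is the sequence-$\ba$ component: unlike Theorem \ref{main2}, averaging along $\{a_i\}$ is not furnished by the Birkhoff ergodic theorem applied to a fixed function, so one must push the excellent-partition / Pinsker-algebra machinery through the non-uniform time windows dictated by $\ba$; this is exactly what Theorem \ref{22-03-06-05} accomplishes, and its proof is where the injectivity of $\phi$ on fibers is used, so that excellent partitions can be pulled back along $\ba$. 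A secondary difficulty is the absence of local compactness of $X$, handled via the inner regularity of Polish probability measures invoked both in Theorem \ref{infinity} and in the Mycielski extraction.
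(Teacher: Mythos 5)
Your overall skeleton (variational principle, relative Pinsker factor, the two auxiliary theorems, a Mycielski extraction) matches the paper, but the way you combine the two inputs contains a genuine gap. You read Theorem \ref{infinity} as saying that the set of pairs asymptotic along $\ba$ has full measure under $\lambda=\mu\times_Y\mu$, and you then intersect it with the full-measure set of mean-separated pairs from Theorem \ref{22-03-06-05}. Theorem \ref{infinity} asserts nothing of the sort: it is a purely topological statement, namely that for $\mu$-a.e.\ $x$ the stable set $W^s_{\ba}(x,T)\cap\supp(\mu_{\pi_1(x)})$ is \emph{dense} in $\supp(\mu_{\pi_1(x)})$. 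Worse, the full-measure claim you need is false precisely in the situation at hand: if $(x_1,x_2)$ is asymptotic along $\ba$, then $\frac1N\sum_{i=1}^{N}d(T^{a_i}x_1,T^{a_i}x_2)\to0$, so such a pair cannot belong to $\mathcal{W}_{\ba}(X,\delta_y)$; since Theorem \ref{22-03-06-05} gives $\mu_y\times\mu_y(\mathcal{W}_{\ba}(X,\delta_y))=1$ when $h_\mu(T|\pi)>0$, the asymptotic pairs have $\mu_y\times\mu_y$-measure zero for $\nu$-a.e.\ $y$. Consequently your set $R(\omega)$ of pairs satisfying both \eqref{22-03-09-01} and \eqref{22-03-09-02} is not obtained by intersecting two full-measure sets, and nothing in your argument shows it has positive $\lambda_\omega$-measure. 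Moreover, even a positive-measure $R(\omega)$ would not feed into the Mycielski machinery: the hypothesis of Lemma \ref{Myc} is a \emph{dense $G_\delta$} subset of the square of a perfect complete space, a Baire-category hypothesis, not a measure one, and a Cantor set $S$ with $S\times S\setminus\Delta$ inside a merely positive-measure set need not exist.

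The paper's resolution is to make the liminf half categorical rather than measure-theoretic. It fixes the relative Pinsker factor $\pi_1:(K,\mathcal{K},\mu,\Phi)\to(Y,\mathcal{Y},\nu,S)$ and disintegrates over $Y$ (not over $\Omega$), working on the compact perfect set $E_y$ with $\pi_2(y)\times E_y=\supp(\mu_y)$ (perfect because $\mu_y$ is non-atomic, by Lemma \ref{key-lem}). The separated pairs $\mathbf{D}_{\delta_y}(y)$ are dense in $E_y\times E_y$ because they carry full $\mu_y\times\mu_y$-measure and $E_y$ is exactly the fiber support; the pairs $\mathbf{P}(y)$ with vanishing liminf average are dense because they contain the pairs asymptotic along $\ba$ inside $\supp(\mu_y)\times\supp(\mu_y)$, whose density is exactly what Theorem \ref{infinity} provides. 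Both sets are $G_\delta$, so Lemma \ref{Myc} applies to $\mathbf{C}(y)=\mathbf{P}(y)\cap\mathbf{D}_{\delta_y}(y)$ and yields the Mycielski set $S_y$ with $\epsilon(\omega)=\delta_y/2$; no measurable selection in $\omega$ is required, since for $\mathbb{P}$-a.e.\ $\omega$ one simply picks a single $y\in\pi_2^{-1}(\omega)\cap\hat Y$ and sets $S(\omega)=S_y$. Finally, the injectivity of $\phi$ is used earlier than you suggest: it makes $(K,\mathcal{K},\mu,\Phi)$ an invertible MDS, so Theorems \ref{infinity} and \ref{22-03-06-05} apply directly to the factor $\pi_\Omega$, avoiding the natural-extension detour of Theorem \ref{main2}, which would distort the averages along a general sequence $\ba$.
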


This paper is organized as follows. In Section \ref{22-03-05-01}, we review some necessary notions and required properties. In Section \ref{22-03-08-01},  we prove that  stable sets in  a Polish system are dense in the support of
 conditional measure with respect to  relative Pinsker factor, namely Theorem \ref{infinity}; at last,  we prove the set of all separating pairs in a Polish system with positive entropy is  full measure of  the  product of conditional  measure with respect to the relative Pinsker factor, namely Theorem \ref{22-03-06-05}. In  \Cref{22-03-08-02}, we prove  Theorem \ref{main2} and Theorem \ref{main1}.

\section{Preliminary}\label{22-03-05-01}

 In this section, we review some basic concepts and results from the theory of measure-preserving dynamical systems.
\subsection{Conditional measure-theoretic entropy}\label{2.1}
 In this subsection, we review the definition of  conditional measure-theoretic entropy of a measure-preserving dynamical system and state some properties of  the conditional measure-theoretic entropy. The reader can refer to \cite{ELW,Wal} for more details.
 
\emph{A Polish  space} $X$ means that is a separable topological space whose topology is metrizable by a complete metric. \emph{A Polish probability space} $(X,\mathcal{X},\mu)$ means that $X$  is a Polish space,  $\mathcal{X}$ is the  Borel $\sigma$-algebra,  and $\mu$ is the probability measure on $\mathcal{X}$. In this moment, $(X, \mathcal{X})$ is called \emph{Polish measurable space}.  A probability space $(X ,\mathcal{X},\mu)$  is called a 
\emph{standard probability space} if  $(X,\mathcal{X})$ is  isomorphic to a Polish measurable space. \emph{A measure-preserving dynamical system} (MDS for short)  $(X,\mathcal{X},\mu,T)$ means that  $T$ is a measure-preserving map  on the probability space $(X,\mathcal{X},\mu)$.  A MDS $(X,\mathcal{X},\mu,T)$ is called \emph{ergodic} if for any $A\in\mathcal{X}$, $T^{-1}A=A$ implies that $\mu(A)\mu(X\setminus A)=0$.
 
    Given two MDSs  $(X,\mathcal{X},\mu,T)$ and $(Y,\mathcal{Y},\nu,S)$, we say that  $(Y,\mathcal{Y},\nu,S)$ is \emph{a factor} of $(X,\mathcal{X},\mu,T)$ if there exists a
measure-preserving map $\pi:(X,\mathcal{X},\mu, T)\rightarrow (Y,\mathcal{Y},\nu, S)$ such that $\pi\circ T=S\circ\pi$, and $\pi$ is called a   \emph{factor map}.
\begin{defn}
	\label{22-07-15-01}
A MDS $(X,\mathcal{X},\mu,T)$ is called a \emph{Polish system} if $(X,\mathcal{X},\mu)$ is a Polish probability space. If $T^{-1}: X\to X$ exists and is measurable,  then  $(X,\mathcal{X},\mu,T)$ is called an \emph{invertible} MDS. 
\end{defn}
Let $(X,\mathcal{X},\mu)$ be a probability space and $\mathcal{S}$ be a sub-$\sigma$-algebra of $\mathcal{X}$. \emph{The conditional expectation} is a linear operator
$\mathbb{E}(\cdot |\mathcal{S}): \mathcal{L}^1(X,\mathcal{X},\mu)\rightarrow \mathcal{L}^1(X,\mathcal{S},\mu)$ charactered by following properties:
\begin{enumerate}[(i)]
	\item for every $f \in \mathcal{L}^1(X,\mathcal{X},\mu)$, $\mathbb{E}(f|\mathcal{S})$ is $\mathcal{S}$-measurable;
	\item for any $A\in\mathcal{S}$ and $f \in \mathcal{L}^1(X,\mathcal{X},\mu)$, $\int_Afd\mu=\int_A\mathbb{E}(f|\mathcal{S})d\mu$.
\end{enumerate}

Recall the Martingale theorem. The reader can see it in  \cite[Theorem 14.26]{G} or \cite[Chapter 5.2]{EW}.
\begin{thm} [Martingale theorem]\label{M}
	Given a  probability space $(X,\mathcal{X},\mu)$,  suppose
	that $\{\mathcal{S}_n\}_{n\in\mathbb{N}}$ is a decreasing sequence (resp. an increasing sequence) of sub-$\sigma$-algebra of $\mathcal{X}$ and $\mathcal{S}=\bigcap_{n\geq 1}\mathcal{S}_n$ (resp. $\mathcal{S}=\sigma(\bigvee_{n\geq 1}\mathcal{S}_n)$). Then for any $f\in\mathcal{L}^1(X,\mathcal{X},\mu)$,
	$\mathbb{E}(f|\mathcal{S}_n)\rightarrow \mathbb{E}(f|\mathcal{S})$
	as $n\rightarrow+\infty$ in $\mathcal{L}^1(X,\mathcal{X},\mu)$ and $\mu$-almost sure.
\end{thm}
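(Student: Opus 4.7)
The plan is to treat both cases within a single scheme: set $M_n := \mathbb{E}(f|\mathcal{S}_n)$; verify a (forward or backward) martingale identity; deduce almost-sure convergence from Doob's upcrossing inequality; upgrade to $\mathcal{L}^1$-convergence via uniform integrability of conditional expectations; and finally identify the limit using the defining property of conditional expectation together with a monotone class argument.

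In the increasing case, the tower property gives $\mathbb{E}(M_{n+1}|\mathcal{S}_n)=M_n$, so $(M_n)$ is a martingale with $\|M_n\|_1\leq\|f\|_1$. For rationals $a<b$, Doob's upcrossing inequality bounds the expected number $U_n[a,b]$ of upcrossings of $[a,b]$ by $M_1,\ldots,M_n$ via $(b-a)\,\mathbb{E}[U_n[a,b]] \leq \mathbb{E}[(M_n-a)^+]$, uniformly in $n$; this forces only finitely many upcrossings of $[a,b]$ almost surely, and a countable union over rational $a<b$ yields $M_n \to M_\infty$ almost surely, with $M_\infty \in \mathcal{L}^1$ by Fatou. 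The family $\{M_n\}$ is uniformly integrable, as is any family $\{\mathbb{E}(f|\mathcal{G})\}_{\mathcal{G}}$ (split $f = f\mathbf{1}_{\{|f|\leq K\}} + f\mathbf{1}_{\{|f|>K\}}$ and choose $K$ large), so $M_n \to M_\infty$ in $\mathcal{L}^1$ as well. For each $A\in\mathcal{S}_m$ and $n\geq m$, $\int_A M_n\,d\mu = \int_A f\,d\mu$; passing to the limit gives the same equality on the algebra $\bigcup_m \mathcal{S}_m$, which extends to $\mathcal{S}=\sigma(\bigcup_m\mathcal{S}_m)$ by a $\pi$-$\lambda$ argument. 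Since $M_\infty$ is $\mathcal{S}$-measurable as an a.s.\ limit of $\mathcal{S}$-measurable functions, this identifies $M_\infty = \mathbb{E}(f|\mathcal{S})$.

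In the decreasing case, $\mathcal{S}_m \subset \mathcal{S}_n$ for $n\leq m$, and the tower property gives $\mathbb{E}(M_n|\mathcal{S}_m)=M_m$, exhibiting $(M_n)$ as a reverse martingale. Applying the upcrossing inequality to the finite sequence $M_n, M_{n-1},\ldots,M_1$ (read as a martingale with reversed time) yields the same uniform bound on expected upcrossings of $[a,b]$, hence $M_n$ converges almost surely to some $M_\infty$. Uniform integrability, established just as above, upgrades this to $\mathcal{L}^1$-convergence. To identify the limit, observe that $M_\infty$ is the a.s.\ limit of $(M_k)_{k\geq n}$, each $\mathcal{S}_n$-measurable, so $M_\infty$ is $\mathcal{S}_n$-measurable modulo null sets for every $n$ and therefore $\mathcal{S}$-measurable. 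For $A\in\mathcal{S}\subset\mathcal{S}_n$, $\int_A M_n\,d\mu = \int_A f\,d\mu$ passes in the limit to $\int_A M_\infty\,d\mu = \int_A f\,d\mu$, so $M_\infty = \mathbb{E}(f|\mathcal{S})$.

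The technical core is Doob's upcrossing inequality and its reverse-time variant, each proved by applying the optional stopping theorem to the stopping times that mark successive entries below $a$ and exits above $b$. Everything else is soft: the uniform integrability of $\{\mathbb{E}(f|\mathcal{G})\}_{\mathcal{G}}$ is the standard splitting estimate, and the limit is pinned down by a monotone class argument from pre-limit equalities on the generating algebra. I expect the reverse case to be the subtler step, since one must apply the upcrossing inequality to a finite backward segment and let its length tend to infinity, rather than letting time evolve forward as in the classical martingale case.
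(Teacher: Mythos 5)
Your proof is correct and complete in all essentials. The paper offers no proof of Theorem \ref{M} at all --- it is recalled as a standard fact with pointers to \cite[Theorem 14.26]{G} and \cite[Chapter 5.2]{EW} --- and your argument (forward/reverse martingale structure of $\mathbb{E}(f|\mathcal{S}_n)$, Doob's upcrossing inequality for almost sure convergence, uniform integrability of $\{\mathbb{E}(f|\mathcal{G})\}_{\mathcal{G}}$ for the $\mathcal{L}^1$ upgrade, and a $\pi$-$\lambda$ identification of the limit) is exactly the standard proof those references give; the only cosmetic refinement is to realize the limit in the decreasing case as $\limsup_{k}\mathbb{E}(f|\mathcal{S}_k)$, which is genuinely $\mathcal{S}_n$-measurable for every $n$, rather than working with the raw a.s.\ limit, which is $\mathcal{S}_n$-measurable only modulo null sets.
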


Let $(X,\mathcal{X}, \mu, T)$ be a MDS. Given a finite measurable partition $\alpha $ and a sub-$\sigma$-algebra $\mathcal{S}$ of $\mathcal{X}$, denote
$$
H_{\mu}(\alpha|\mathcal{S}):=\sum_{A\in \alpha} \int_X-\mathbb{E}(1_A|\mathcal{S}) \log\mathbb{E}(1_A|\mathcal{S}) d \mu,$$ 
Note that $\{H_\mu(\bigvee_{i=0}^{n-1}T^{-i}\alpha|\mathcal{S})\}_{n\in\mathbb{N}_0}$ is a non-negative and sub-additive sequence. Therefore,  the conditional
measure-theoretic entropy of $\mu$ with respect to $\mathcal{S}$ is defined as 
$$h_\mu(T|\mathcal{S}):=\sup_{\alpha} h_\mu(T,\alpha|\mathcal{S}):=\sup_{\alpha}\lim_{n\to+\infty}\frac{1}{n}H_\mu\left(\bigvee_{i=0}^{n-1}T^{-i}\alpha|\mathcal{S}\right),$$
where $\alpha$ runs over all finite measurable partitions of $\mathcal{X}$. Let $\pi:(X,\mathcal{X},\mu,T)\rightarrow (Y,\mathcal{Y},\nu,S)$ be a factor map between two MDSs. The conditional  measure-theoretic entropy  of $\mu$ with respect to $\pi$ is defined as $h_\mu(T|\pi):=h_\mu(T|\pi^{-1}\mathcal{Y}).$ The following result is a generalization of Abramov-Rohlin formula from \cite{BC}.
\begin{lem} \label{GAR} 
	Let $\pi:(X,\mathcal{X},\mu,T)\rightarrow (Y,\mathcal{Y},\nu,S)$ and $\psi:(Y,\mathcal{Y},\nu,S)\rightarrow
	(Z,\mathcal{Z},\eta,R)$ be two factor maps between two MDSs on standard probability spaces. Then  $\psi\circ
	\pi:(X,\mathcal{X},\mu,T)\rightarrow (Z,\mathcal{Z},\eta,R)$ is also a factor map  and
	$$h_\mu(T|\psi\circ \pi)=h_\mu(T|\pi)+h_\nu(S|\psi).$$ \end{lem}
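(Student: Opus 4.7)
The plan is to reduce the claim to a partition-level chain rule for conditional entropy and then take suprema carefully. First I would check that $\psi\circ\pi$ is a factor map: the composition is measurable, $(\psi\circ\pi)_*\mu = \psi_*\nu = \eta$, and $(\psi\circ\pi)\circ T = \psi\circ S\circ\pi = R\circ(\psi\circ\pi)$, so the three defining properties are inherited from those of $\pi$ and $\psi$. Setting $\mathcal{A} = \pi^{-1}\mathcal{Y}$ and $\mathcal{C} = (\psi\circ\pi)^{-1}\mathcal{Z} = \pi^{-1}\psi^{-1}\mathcal{Z}$, one has $\mathcal{C}\subset\mathcal{A}$, both $T$-invariant, and the formula to prove becomes
\[
h_\mu(T\,|\,\mathcal{C}) = h_\mu(T\,|\,\mathcal{A}) + h_\nu(S\,|\,\psi^{-1}\mathcal{Z}).
\]

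The core computation happens at the partition level. Fix finite measurable partitions $\alpha$ of $X$ and $\beta$ of $Y$, and set $\alpha_n=\bigvee_{i=0}^{n-1}T^{-i}\alpha$ and $\beta_n=\bigvee_{i=0}^{n-1}S^{-i}\beta$. The chain rule for conditional entropy yields
\[
H_\mu(\alpha_n\vee\pi^{-1}\beta_n\,|\,\mathcal{C}) \;=\; H_\mu(\alpha_n\,|\,\pi^{-1}\beta_n\vee\mathcal{C}) \;+\; H_\mu(\pi^{-1}\beta_n\,|\,\mathcal{C}).
\]
Because $\pi$ is measure-preserving and $\mathcal{C}=\pi^{-1}\psi^{-1}\mathcal{Z}$, the last summand equals $H_\nu(\beta_n\,|\,\psi^{-1}\mathcal{Z})$, so after dividing by $n$ and letting $n\to\infty$ it contributes $h_\nu(S,\beta\,|\,\psi^{-1}\mathcal{Z})$. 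For the middle summand I observe that as $n\to\infty$, the conditioning $\sigma$-algebra $\pi^{-1}\beta_n\vee\mathcal{C}$ increases to $\pi^{-1}\bigl(\beta^{\infty}\vee\psi^{-1}\mathcal{Z}\bigr)\vee\mathcal{C}$, where $\beta^{\infty}=\sigma\bigl(\bigvee_{i\geq 0}S^{-i}\beta\bigr)$.

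Combining, and dividing by $n$ in the chain-rule identity before sending $n\to\infty$, one arrives at
\[
h_\mu(T,\alpha\vee\pi^{-1}\beta\,|\,\mathcal{C}) \;=\; h_\mu\bigl(T,\alpha\,\big|\,\pi^{-1}(\beta^{\infty}\vee\psi^{-1}\mathcal{Z})\vee\mathcal{C}\bigr) \;+\; h_\nu(S,\beta\,|\,\psi^{-1}\mathcal{Z}).
\]
Since $(Y,\mathcal{Y},\nu,S)$ is standard, I can choose a refining sequence of finite partitions $\{\beta_k\}$ whose $S$-iterates generate $\mathcal{Y}$ modulo $\psi^{-1}\mathcal{Z}$, so that $\pi^{-1}(\beta_k^{\infty}\vee\psi^{-1}\mathcal{Z})\vee\mathcal{C}\nearrow\mathcal{A}$. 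Taking the supremum over $\alpha$ and along $\{\beta_k\}$, the left side rises to $h_\mu(T\,|\,\mathcal{C})$, the first term on the right converges via \fullref{M} to $h_\mu(T\,|\,\mathcal{A})$, and the second converges to $h_\nu(S\,|\,\psi^{-1}\mathcal{Z})$, delivering the desired identity.

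The main obstacle I expect is justifying the interchange of the three limiting procedures at play — the Ces\`aro limit defining the per-step entropy, the refinement limit of the conditioning $\sigma$-algebra governed by the Martingale theorem, and the supremum over partitions — which do not commute unconditionally. This is handled via the uniform bound $-x\log x\leq 1/e$ on $[0,1]$ combined with the $\mathcal{L}^1$-convergence from \fullref{M}, supplemented by a monotone-class argument that promotes convergence on a countable generator to convergence on the full $\sigma$-algebra. This step is the technical heart of the Bogensch\"utz--Crauel-style relative Abramov--Rohlin formula in the non-invertible setting.
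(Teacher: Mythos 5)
First, note that the paper does not prove this lemma at all: it is quoted as a known generalization of the Abramov--Rohlin formula from \cite{BC}, so your attempt has to be measured against the standard argument in the literature rather than against anything in the text. Your overall strategy (chain rule at the partition level, identify the base term $H_\mu(\pi^{-1}\beta_n\mid\mathcal{C})=H_\nu(\beta_n\mid\psi^{-1}\mathcal{Z})$, then exhaust $\mathcal{Y}$ by finite partitions) is indeed the classical route, and the opening verification that $\psi\circ\pi$ is a factor map and the closing exhaustion step are essentially fine (the continuity of $h_\mu(T,\alpha\mid\cdot)$ under an increasing sequence of $T$-invariant $\sigma$-algebras follows from $h=\inf_n\frac1n H$ together with \fullref{M}).

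The genuine gap is the middle step. From the monotonicity $\pi^{-1}\beta_n\vee\mathcal{C}\nearrow\pi^{-1}\beta^{\infty}\vee\mathcal{C}$ you conclude
\[
\lim_{n\to\infty}\tfrac1n H_\mu\bigl(\alpha_n\mid\pi^{-1}\beta_n\vee\mathcal{C}\bigr)=h_\mu\bigl(T,\alpha\mid\pi^{-1}\beta^{\infty}\vee\mathcal{C}\bigr),
\]
but since both the partition $\alpha_n$ and the conditioning $\sigma$-algebra move with $n$, monotone convergence of the $\sigma$-algebras (plus the Martingale theorem) only yields the inequality $\geq$ (conditioning on the smaller $\sigma$-algebra $\pi^{-1}\beta_n\vee\mathcal{C}$ gives larger entropy). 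The reverse inequality is precisely the substance of the Abramov--Rohlin-type formula: in the invertible case it is the identity $h_\mu(T,\alpha\vee\beta\mid\mathcal{S})=h_\mu(T,\beta\mid\mathcal{S})+h_\mu(T,\alpha\mid\beta^{T}\vee\mathcal{S})$ with the \emph{two-sided} $\sigma$-algebra $\beta^{T}$ (the paper's \fullref{H-1}\eqref{22-03-05-03}), which is proved by a chain-rule-plus-time-shift argument and only afterwards can be rewritten with the one-sided $\beta^{\infty}$ by a time-reversal argument; in the non-invertible case, which the statement of \fullref{GAR} allows and which the paper actually uses (it applies the lemma with the non-invertible intermediate system $(K,\mathcal{K},\mu,\Phi)$ in the proof of \fullref{main2}), the shift trick is unavailable and this is exactly the difficulty that \cite{BC} is devoted to, typically handled via careful estimates or a reduction through the natural extension as in \fullref{na=zero}. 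The tools you invoke to repair the limit interchange --- the bound $-x\log x\leq 1/e$ and $\mathcal{L}^1$-convergence from \fullref{M} --- do not address this point, because the failure is not one of integrability or of convergence on a generator but of the direction of the inequality when partition and conditioning refine simultaneously. As written, the proof therefore asserts rather than proves its key identity, and it is not even clear that the one-sided identity you state is true verbatim for non-invertible $T$.
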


\subsection{Entropy for random dynamical systems}
\label{22-11-28-01}
In this subsection, we mainly  introduce the entropy and the variational principle in random dynamical systems. Throughout this subsection, we assume that $(X,\phi)$ is a continuous random dynamical system over the ergodic Polish system  $(\Omega, \mathcal{F},\mathbb{P},\theta)$, where $(X,d)$ is a complete separable metric space. The 
reader can  refer to \cite{A,KiL} for more details.
\begin{defn} 
Suppose that $(X,\phi)$ is a continuous random dynamical system over an  invertible  ergodic Polish system $(\Omega,\mathcal{F},\mathbb{P},\theta)$. The map
$$\Phi:\Omega\times X\rightarrow \Omega\times X,\ \ (\omega,x)\mapsto(\theta \omega,\phi(1,\omega)x)$$ is said to be a \emph{skew product  system }induced by $(X,\phi)$. 
\end{defn}

Let  $\pi_\Omega:\Omega\times X\rightarrow \Omega$ be the projection. A
probability measure $\mu$ on the measurable space $(\Omega\times X,\mathcal{F}\times  \mathcal{X})$ is said
to have \emph{marginal $\mathbb{P}$}  if $(\pi_{\Omega})_*\mu=\mathbb{P}$, namely $\mu(A\times X)=\mathbb{P}(A)$ for any measurable subset $A\in\mathcal{F}$.  Denote 
$\mathcal{P}_{\mathbb{P}}(\Omega\times X)$  as the collection of such measures, $\mathcal{M}_{\mathbb{P}}(\Omega\times X,\Phi)$ as 
the collections of $\Phi$-invariant elements of $\mathcal{P}_{\mathbb{P}}(\Omega\times X)$ and  $\mathcal{E}_{\mathbb{P}}(\Omega\times X,\Phi)$  as the collections of ergodic elements of $ \mathcal{M}_{\mathbb{P}}(\Omega\times X,\Phi)$. For convenience, we omit $\Phi$ and write $ \mathcal{M}_{\mathbb{P}}(\Omega\times X,\Phi)$ and $\mathcal{E}_{\mathbb{P}}(\Omega\times X,\Phi)$ as $ \mathcal{M}_{\mathbb{P}}(\Omega\times X)$ and $\mathcal{E}_{\mathbb{P}}(\Omega\times X)$, respectively.

Assume that $K$ is a $\phi$-invariant random compact set.  Then  there  exists $\mu\in  \mathcal{M}_{\mathbb{P}}(\Omega\times X)$ with $\mu(K)=1$ (see \cite{C} or \cite[Theorem 1.6.13]{A}). Set
 $$\mathcal{M}^K_{\mathbb{P}}(\Omega\times X)=\{ \mu\in \mathcal{M}_{\mathbb{P}}(\Omega\times X):\mu(K)=1\}.$$
Since $K$ is a Borel subset of $\Omega\times X$ (see \cite[Proposition 1.6.2]{A}), $(K,\mathcal{K},\mu,\Phi)$ is a MDS on a standard probability space where $\mathcal{K}=\{A\cap K: A\in\mathcal{F}\times\mathcal{X}\}$ and 
$$\pi_{\Omega}: (K,\mathcal{K},\mu,\Phi)\to (\Omega,\mathcal{F},\mathbb{P},\theta)$$ 
 is a factor map between MDSs on standard probability spaces. The \emph{measure-theoretic entropy} $(K,\phi)$ with respect to $\mu$ is defined by
$$h_\mu(\phi,K):=h_\mu(\Phi|\pi_\Omega)=h_\mu(\Phi|\pi_\Omega^{-1}\mathcal{F}).$$ That is, $h_{\mu}(\phi,K)$ is the conditional measure-theoretic entropy of $(K,\mathcal{K},\mu,\Phi)$ with respect to
$\pi_\Omega$.

 Now we prepare to give the definition of the topological entropy of $(K,\phi)$. The reader can refer to \cite{MR1181382, K01} for the definition of the topological entropy when $X$ is compact. For $\omega\in\Omega, \epsilon>0$ and $n\in\mathbb{N}$, a subset $E$ of $K(\omega)$ is called an $(\omega, n,\epsilon,\phi)$-separated subsets of $K(\omega)$ if for any distinct points $x,y\in E$, one has that 
 \begin{align*}
 \max_{0\leq i\leq n-1} d(\phi(i,\omega)x, \phi(i,\omega)y)>\epsilon.
 \end{align*}
Denote the maximal cardinality of all $(\omega, n,\epsilon,\phi)$-separated subsets of $K(\omega)$ as $r_n(K,\omega,\epsilon,\phi)$. The \emph{topological entropy} of $(K, \phi)$ is defined as following 
\begin{align}
h_{top}(\phi, K):=\lim_{\epsilon\to 0}\limsup_{n\to+\infty}\frac{1}{n}\int_{\Omega}\log r_n(K,\omega,\epsilon,\phi)d\mathbb{P}(\omega).
\end{align}
    
    We end this subsection by presenting the  variational principle  for random dynamical systems.  The reader can see it in \cite[ Proposition 3.7]{HL}.
\begin{prop}
\label{VP} 
Let  $(X,\phi)$ be a continuous random dynamical system over $(\Omega,\mathcal{F},\mathbb{P},\theta)$ and  $K$ be a $\phi$-invariant random  compact set. Then $$h_{\text{top}}(\phi,K)=\sup \{h_\mu(\phi,K):\mu\in
\mathcal{M}_{\mathbb{P}}^K(\Omega\times X)\}=\sup \{h_\mu(\phi,K):\mu\in \mathcal{E}_{\mathbb{P}}^K(\Omega\times X)\},$$ 
where   $ \mathcal{E}^K_{\mathbb{P}}(\Omega\times X)$ is  the set of the ergodic elements of $ \mathcal{M}^K_{\mathbb{P}}(\Omega\times X)$.
\end{prop}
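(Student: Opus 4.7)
The plan is to establish the two equalities by proving (i) $h_\mu(\phi,K)\le h_{\text{top}}(\phi,K)$ for every $\mu\in\mathcal{M}_\mathbb{P}^K(\Omega\times X)$, (ii) a matching lower bound via a Misiurewicz-type construction producing an invariant measure from maximal separated sets, and (iii) the reduction from invariant to ergodic measures through the ergodic decomposition.

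For the upper bound, I would disintegrate $\mu$ as $d\mu(\omega,x)=d\mu_\omega(x)\,d\mathbb{P}(\omega)$ with $\mu_\omega$ supported on $K(\omega)$, so that, by the definition of $h_\mu(\phi,K)$,
\begin{align*}
h_\mu(\phi,K)=\sup_{\alpha}\lim_{n\to+\infty}\frac{1}{n}\int_\Omega H_{\mu_\omega}\Bigl(\bigvee_{i=0}^{n-1}\phi(i,\omega)^{-1}\alpha\Bigr)\,d\mathbb{P}(\omega),
\end{align*}
where $\alpha$ ranges over finite Borel partitions of $X$. Fixing such an $\alpha=\{A_1,\ldots,A_k\}$ and $\varepsilon>0$, I would approximate each $A_j\cap K(\omega)$ from inside by compact sets (using inner regularity of $\mu_\omega$ on the Polish space) and cover these compacts by finitely many sets of diameter less than $\varepsilon$. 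Counting the atoms of $\bigvee_{i=0}^{n-1}\phi(i,\omega)^{-1}\alpha$ against a maximal $(\omega,n,\varepsilon,\phi)$-separated subset of $K(\omega)$ and applying the standard entropy-counting lemma yields $H_{\mu_\omega}(\bigvee_{i=0}^{n-1}\phi(i,\omega)^{-1}\alpha)\le \log r_n(K,\omega,\varepsilon,\phi)+o(n)$ fiberwise, and integrating over $\omega$ and sending $\varepsilon\to 0$ gives the claim.

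For the lower bound, I would use a measurable selection theorem to choose, for each $n$, a $(\omega,n,\varepsilon,\phi)$-separated subset $E_n(\omega)\subset K(\omega)$ of maximal cardinality depending measurably on $\omega$, form the random empirical measures $\sigma_n(\omega)=\frac{1}{|E_n(\omega)|}\sum_{x\in E_n(\omega)}\delta_x$ together with their Ces\`aro averages $\nu_n(\omega)=\frac{1}{n}\sum_{i=0}^{n-1}\phi(i,\omega)_\ast\sigma_n(\omega)$, and assemble them into $\mu_n\in\mathcal{P}_\mathbb{P}(\Omega\times X)$ with $\mu_n(K)=1$. Since $K$ is a random compact set, I would extract a weak-$\ast$ subsequential fiberwise limit $\mu\in\mathcal{M}_\mathbb{P}^K(\Omega\times X)$; then the classical Misiurewicz estimate, applied fiberwise with a partition whose atoms have diameter less than $\varepsilon/2$ and boundaries $\mu_\omega$-null, yields $h_\mu(\phi,K)\ge\limsup_{n\to+\infty}\frac{1}{n}\int_\Omega\log r_n(K,\omega,\varepsilon,\phi)\,d\mathbb{P}(\omega)$, and letting $\varepsilon\to 0$ closes the inequality. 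The ergodic version follows by disintegrating any such $\mu$ along the $\Phi$-invariant sub-$\sigma$-algebra (whose components remain in $\mathcal{M}_\mathbb{P}^K(\Omega\times X)$ because $\pi_\Omega$ pushes each component to the ergodic $\mathbb{P}$), combined with the affinity of conditional entropy under this decomposition.

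The main obstacle is the lower bound: because $X$ is only Polish and not locally compact, weak-$\ast$ compactness on $\Omega\times X$ is not automatic, so the tightness of $\{\mu_n\}$ has to be recovered from the random compactness of $K$ via a measurable selection of the $E_n(\omega)$ together with a fiberwise (rather than global) notion of convergence. One must then verify that the resulting limit respects the $\mathbb{P}$-marginal structure, remains concentrated on $K$, and admits a partition satisfying the Misiurewicz boundary condition simultaneously in $\omega$, which is the technical heart of the argument.
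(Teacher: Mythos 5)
The paper does not actually prove this proposition: it is quoted directly from Huang and Lu \cite[Proposition 3.7]{HL}, so there is no internal proof to compare against. Your outline is the standard Misiurewicz scheme adapted to the random setting, which is essentially the route taken in that cited source (building on Bogensch\"utz \cite{MR1181382} and Kifer), and the three-part structure --- upper bound by fiberwise counting against separated sets, lower bound by empirical measures built from maximal separated sets, reduction to ergodic measures via affinity of the conditional entropy over the ergodic decomposition (each ergodic component retains marginal $\mathbb{P}$ because $\mathbb{P}$ is ergodic, and stays supported on $K$) --- is the correct one. Two points in your sketch need more than the acknowledgment you give them. First, in the upper bound you implicitly use that the supremum defining $h_\mu(\Phi\mid\pi_\Omega^{-1}\mathcal{F})$ may be restricted to partitions of the form $\{\Omega\times A_j\}$ with $A_j\subset X$; this is true but is a lemma, not the definition, and since $X$ is not compact there is no finite partition of uniformly small diameter, so your inner-regularity step must also control the conditional entropy contributed by the complement of the chosen compact sets (a term of the shape $\mu_\omega(X\setminus C)\log k$ that has to be made small uniformly in $n$). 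Second, and more seriously, the extraction of the limit measure in the lower bound is exactly where a na\"ive weak-$\ast$ argument fails on a Polish space; the correct substitute is Crauel's result \cite{C} that the set of probability measures on $\Omega\times X$ with marginal $\mathbb{P}$ supported on a fixed random compact set is compact in the narrow topology on random measures (fiberwise weak-$\ast$ convergence), which simultaneously gives the subsequential limit, preserves the marginal, and keeps the limit concentrated on $K$. With those two ingredients supplied, your argument closes and agrees with the proof in \cite{HL}.
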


\subsection{Natural extension}\label{Natural}
In this subsection, we review the natural extension of the MDSs on standard probability spaces. Assume that $(X,\mathcal{X},\mu,T)$ is  a MDS on a standard probability space.    Let
\begin{align*}
&\bar{X}={\{\vec{x}=(x_{i})_{i\in  \mathbb{Z}}\in X^{\mathbb{Z}}: \, Tx_{i}=x_{{i+1}}, i\in \mathbb{Z}\}},
\\\quad&\bar{T}:\bar{X}\to \bar{X},\quad (x_{i})_{i\in \mathbb{Z}}\mapsto(Tx_{{i}})_{i\in \mathbb{Z}},
\end{align*} 
$\bar{\mathcal{X}}$ be the $\sigma$-algebra which is  generated by $\bigcup_{n\in\mathbb{Z}} \Pi_{n,X}^{-1}\mathcal{X}$  where $\Pi_{n,X}: \bar{X}\to X $ with $\Pi_{n, X}(\vec{x})=x_{n}$,  and   $\bar{\mu}$ be the measure on $\bar{\mathcal{X}}$ which is defined by  $\bar{\mu}(\Pi_{n,X}^{-1}(A))=\mu(A)$  for $A\in
\mathcal{X}$.  It is clear that $(\bar{X}, \bar{\mathcal{X}},\bar{\mu}, \bar{T})$ is an  invertible MDS  on a standard probability space.  Then  
$$\Pi_X:=\Pi_{0,X}:(\bar{X},\bar{\mathcal{X}},\bar{\mu},\bar{T})\rightarrow(X,\mathcal{X},\mu,T)$$ is a factor map  and $(\bar{X},\bar{\mathcal{X}},\bar{\mu},\bar{T})$ is  called the natural extension of $(X,\mathcal{X},\mu,T)$. The reader can refer to \cite{Roh2} for the proof. In \cite{Roh1}, it is proved that $(\bar{X},\bar{\mathcal{X}},\bar{\mu},\bar{T})$ is ergodic if and only if $(X,\mathcal{X},\mu,T)$ is ergodic.


Now, we state  a result about the conditional measure-theoretic entropy of the  natural extension from \cite[Lemma 3.2]{HL}. 
  \begin{lem} \label{na=zero} 
 Let $\Pi_X:(\bar{X},\bar{\mathcal{X}},\bar{\mu},\bar{T})\rightarrow
(X,\mathcal{X},\mu,T)$ be  the natural extension  of the MDS $(X,\mathcal{X},\mu,T)$ on a standard probability space. Then
$h_{\bar{\mu}}(\bar{T}|\Pi_X)=0$.
\end{lem}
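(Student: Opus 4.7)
The plan is to follow the standard natural-extension entropy argument: since $\bar{\mathcal X}$ is generated (mod $\bar\mu$) by the algebra $\bar{\mathcal X}_\infty:=\bigcup_{n\in\mathbb Z}\Pi_{n,X}^{-1}\mathcal X$, any finite measurable partition of $\bar X$ can be approximated arbitrarily well by partitions lying in $\Pi_{-k,X}^{-1}\mathcal X$ for sufficiently large $k$, and such partitions shift (under $\bar T$) into refinements of $\Pi_X^{-1}\mathcal X$. This will force the fiber entropy over $\Pi_X$ to vanish.

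More concretely, fix a finite measurable partition $\alpha=\{A_1,\dots,A_k\}$ of $\bar X$ and an integer $m\geq 1$. By standard continuity of entropy with respect to the partition metric $\bar\mu(\alpha\Delta\beta)=\sum_i\bar\mu(A_i\Delta B_i)$, there is $\delta>0$ such that $\bar\mu(\alpha\Delta\beta)<\delta$ implies $H_{\bar\mu}(\alpha|\beta)<1/m$. Since $\bar{\mathcal X}_\infty$ is an algebra generating $\bar{\mathcal X}$ mod $\bar\mu$, I can choose sets $A_i'\in\bar{\mathcal X}_\infty$ with $\bar\mu(A_i\Delta A_i')$ as small as I wish; disjointifying them (setting $C_1=A_1'$, $C_i=A_i'\setminus\bigcup_{j<i}A_j'$ for $1<i<k$, and $C_k=\bar X\setminus\bigcup_{j<k}C_j$) yields a measurable partition $\gamma=\{C_1,\dots,C_k\}\subset\bar{\mathcal X}_\infty$ with $\bar\mu(\alpha\Delta\gamma)<\delta$. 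Because the $C_i$ all lie in finitely many of the $\Pi_{n,X}^{-1}\mathcal X$, and the family $\{\Pi_{n,X}^{-1}\mathcal X\}_{n\in\mathbb Z}$ is decreasing in $n$ (as $\Pi_{n,X}=T\circ\Pi_{n+1,X}$), there exists $k_*\in\mathbb N$ and a partition $\tau$ of $X$ such that $\gamma=\Pi_{-k_*,X}^{-1}\tau=\bar T^{-k_*}\Pi_X^{-1}\tau$.

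The crux is then the estimate
\[
H_{\bar\mu}\Bigl(\bigvee_{i=0}^{n-1}\bar T^{-i}\alpha\,\Big|\,\Pi_X^{-1}\mathcal X\Bigr)\leq H_{\bar\mu}\Bigl(\bigvee_{i=0}^{n-1}\bar T^{-i}\alpha\,\Big|\,\Pi_X^{-1}\bigvee_{i=0}^{n-1}T^{-i}\tau\Bigr),
\]
where one applies $\bar T^{-k_*}$ to both arguments (preserving conditional entropy because $\bar T$ is measure-preserving and the conditioning algebra already sits inside $\Pi_X^{-1}\mathcal X$) and then bounds the right-hand side by expanding to $\bigvee_{i=0}^{n+k_*-1}\bar T^{-i}\alpha$ and using the standard inequalities $H(\xi\vee\eta|\mathcal C)\leq\sum H(\xi|\mathcal C),\ H(\xi_{n},\dots,\xi_{n+k_*-1})\leq k_*H(\alpha)$. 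The result is the bound $H_{\bar\mu}(\bigvee_{i=0}^{n-1}\bar T^{-i}\alpha\,|\,\Pi_X^{-1}\mathcal X)\leq nH_{\bar\mu}(\alpha|\gamma)+k_*H_{\bar\mu}(\alpha)$. Dividing by $n$, letting $n\to\infty$, and invoking $H_{\bar\mu}(\alpha|\gamma)<1/m$ gives $h_{\bar\mu}(\bar T,\alpha|\Pi_X^{-1}\mathcal X)<1/m$; since $m$ and $\alpha$ were arbitrary, this yields $h_{\bar\mu}(\bar T|\Pi_X)=0$.

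The main technical obstacle is the bookkeeping in the middle step: ensuring that after applying $\bar T^{-k_*}$ to rewrite $\gamma$ as $\bar T^{-k_*}\Pi_X^{-1}\tau$, the resulting inequality is obtained by comparing $\bigvee_{i=0}^{n-1}\bar T^{-i}\alpha$ against a partition of the form $\bigvee_{j=0}^{n-1}\bar T^{-j}\gamma$ via a shift, which requires carefully tracking that the additional $k_*$ shifts contribute only a bounded additive error $k_*H_{\bar\mu}(\alpha)$ that dies after normalization by $1/n$. Everything else is a routine application of the standard continuity lemma for entropy and the generating property of the cylinder algebra for the natural extension.
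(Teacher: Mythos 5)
Your proposal is correct and is essentially the argument the paper relies on: the paper itself simply cites Huang--Lu's Lemma 3.2, whose proof (and the paper's commented-out analogue for the sequence version) is exactly your scheme of approximating $\alpha$ by a cylinder partition $\gamma\subset\Pi_{-k_*,X}^{-1}\mathcal{X}$, shifting, and using subadditivity to get the bound $nH_{\bar\mu}(\alpha|\gamma)+k_*H_{\bar\mu}(\alpha)$. One bookkeeping remark: with the paper's convention $Tx_i=x_{i+1}$ (so $\Pi_{n+1,X}=T\circ\Pi_{n,X}$, not $\Pi_{n,X}=T\circ\Pi_{n+1,X}$), the correct relation is $\bar{T}^{-k_*}\gamma=\Pi_X^{-1}\tau$, i.e.\ $\gamma=\bar{T}^{k_*}\Pi_X^{-1}\tau$ rather than $\gamma=\bar{T}^{-k_*}\Pi_X^{-1}\tau$, but since the two partitions differ only by a shift of $k_*$ steps the estimate and the conclusion $h_{\bar\mu}(\bar{T}|\Pi_X)=0$ are unaffected.
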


\subsection{Disintegration of measures}
\label{22-07-06-01}
In this subsection, we recall some notations and results on the disintegration of measures which are summarized from \cite[Chapter 5 and 6]{EW}.

For a factor map $\pi:(X,\mathcal{X},\mu,T)\rightarrow (Y,\mathcal{Y},\nu,S)$  between two MDSs on standard probability spaces, there is a set of conditional probability measures $\{\mu_y\}_{y\in Y}$ with the following
properties: 
\begin{itemize}
	\item $\mu_y$ is a probability measure on $(X,\mathcal{X})$ with $\mu_y(\pi^{-1}(y))=1$ for $\nu$-a.s. $y\in Y$;
	\item for each $f \in \mathcal{L}^1(X,\mathcal{X},\mu)$,  one  has $f \in \mathcal{L}^1(X,\mathcal{X},\mu_y)$ for $\nu$-a.s. $y\in Y$, the map $y \mapsto
	\int_X f\,d\mu_y$ is in $\mathcal{L}^1(Y,\mathcal{Y},\nu)$ and $\int_Y \left(\int_X f\,d\mu_y \right)\, d\nu(y)=\int_X f \,d\mu$.
\end{itemize}
$\mu=\int_Y \mu_y d \nu(y)$ is called the \emph{disintegration of $\mu$ relative to
	the  factor $(Y,\mathcal{Y},\nu,S)$}.
	Furthermore, the measures $\{\mu_y\}_{y\in Y}$ are essentially unique and $T_*\mu_y=\mu_{Sy}$  for $\nu$-a.s. $y\in Y$. The conditional  expectations and the conditional measures are related by
\begin{equation} \label{meas3} \mathbb{E}(f|\pi^{-1}\mathcal{Y})(x)=\int_X f\,d\mu_{\pi(x)} \ \ \text{for
	$\mu$-a.s. } x\in X
\end{equation} for every  $f\in\mathcal{L}^1(X,\mathcal{X},\mu)$. The product of $(X,\mathcal{X},\mu,T)$ with itself relative to  factor $(Y,\mathcal{Y},\nu,S)$ is the MDS
$$(X\times X,\mathcal{X}\times \mathcal{X},\mu \times_Y\mu, T\times T),$$ where the measure $\mu\times_Y\mu=\int_{Y}(\mu_y\times \mu_y) \, d \nu(y),$ which is   $T\times T$-invariant and  is supported on $R_\pi:=\{(x_1,x_2)\in X\times
X:\pi(x_1)=\pi(x_2)\}.$ 

The disintegration of measures has another equivalent form of expression. Specifically, denoting $\mathcal{S}:=\pi^{-1}\mathcal{Y}$, $\{\mu_x^{\mathcal{S}}\}_{x\in X}$ is the disintegration of $\mu$ relative to $\mathcal{S}$  if the followings hold:
\begin{itemize}
	\item $\mu_x^{\mathcal{S}}$ is a probability measure on $(X,\mathcal{X})$ with $\mu_x^{\mathcal{S}}(\pi^{-1}\pi(x))=1$ for $\mu$-a.s. $x\in X$;
	\item for $\mu$-a.s. $x\in X$, one has that for any $x_1,x_2\in\pi^{-1}\pi(x)$, $\mu_{x_1}^{\mathcal{S}}=\mu_{x_2}^{\mathcal{S}}$;
	\item for each $f \in \mathcal{L}^1(X,\mathcal{X},\mu)$,  one  has that $f \in \mathcal{L}^1(X,\mathcal{S}, \mu_x^{\mathcal{S}})$ for $\mu$-a.s. $x\in X$, the map $x \mapsto
	\int_X f\,d\mu_x^{\mathcal{S}}$ belongs to $\mathcal{L}^1(X,\mathcal{X}, \mu)$ and $\int_X \left(\int_X f\,d\mu_x^{\mathcal{S}} \right)\, d\mu(x)=\int_X f \,d\mu$.
\end{itemize}
We remark that the two forms above-mentioned are equivalent. Namely, $\mu_x^{\mathcal{S}}=\mu_{\pi(x)}$ for $\mu$-a.s. $x\in X$.

	\subsection{Relative Pinsker $\sigma$-algebra}In this
subsection, we introduce some notations and results on the relative
Pinsker $\sigma$-algebra.

 Let $(X,\mathcal{X},\mu,T)$ be a MDS  on a standard probability space and $\mathcal{S}$ be  a $T$-invariant sub-$\sigma$-algebra  of $\mathcal{X}$. The \emph{relative Pinsker $\sigma$-algebra $\mathcal{P}_\mu(\mathcal{S})$}
 with respect to $\mathcal{S}$ is defined as the smallest $\sigma$-algebra containing
 $$\{ A\in\mathcal{X}:  h_\mu(T,\{A, A^c\}|\mathcal{S})=0\}.$$
  By \cite[Section 4.10]{Wal},  $\mathcal{P}_\mu(\mathcal{S})$ is a $T$-invariant sub-$\sigma$-algebra of $\mathcal{X}$. Hence,  it
uniquely (up to an isomorphism) determines a factor $(Y,\mathcal{Y},\nu,S)$ of $(X,\mathcal{X},\mu,T)$ (see \cite[Theorem 6.5]{EW}). That is, there
exists a factor map 
$$\pi:(X,\mathcal{X},\mu,T)\rightarrow (Y,\mathcal{Y},\nu,S)$$ between two MDSs on standard probability spaces such that $\pi^{-1}(\mathcal{Y})=\mathcal{S}\pmod\mu$. Usually,  $\pi:(X,\mathcal{X},\mu,T)\rightarrow (Y,\mathcal{Y},\nu,S)$ is called \emph{relative Pinsker factor map}  with  respect to $\mathcal{S}$.
If $\mathcal{S}=\{X,\emptyset\}$, then $\mathcal{P}_\mu(T):=\mathcal{P}_\mu(\mathcal{S})$ is called  \emph{Pinsker $\sigma$-algebra} of  $(X,\mathcal{X},\mu,T)$.

Following lemma is  well-known  in  MDSs.  This lemma   will be used in the proof of Theorem \ref{22-03-06-05} and the main theorems.  The reader  can refer to  \cite[Theorem 2.1 and Theorem 2.3]{BGKM} or \cite[Lemma 4.1]{ZHANG}).
\begin{lem} \label{key-lem} 
Assume that $\pi:(X,\mathcal{X},\mu,T)\rightarrow (Z,\mathcal{Z},\eta,R)$ is  a  factor map between two  ergodic  MDSs on standard probability spaces. Let  $\pi_1:(X,\mathcal{X},\mu,T)\rightarrow
(Y,\mathcal{Y},\nu ,S)$ be the relative Pinsker factor map with  respect to
$\pi^{-1}\mathcal{Z}$ and  $\mu=\int_Y  \mu_y d \nu(y)$ be the disintegration of $\mu$ relative to the
factor $(Y,\mathcal{Y},\nu,S)$. If  $h_\mu(T|\pi)>0$, then $\mu_y$ is non-atomic (i.e. $\mu_y(\{x\})=0$ for each $x\in  X$) for $\nu$-a.s. $y\in Y$.
 \end{lem}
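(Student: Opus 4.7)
The plan is a proof by contradiction via the ergodicity of the fibered self-joining $\mu\times_Y\mu$, which is a distinguishing feature of the relative Pinsker factor. Suppose toward a contradiction that there exists $Y_0\in\mathcal{Y}$ with $\nu(Y_0)>0$ such that $\mu_y$ carries at least one atom for every $y\in Y_0$. After replacing $(X,T)$ by its natural extension (\Cref{Natural}, using \Cref{na=zero} together with \Cref{GAR} to see that this preserves the ergodicity, the factor $\pi$, the relative Pinsker structure, and the value of $h_\mu(T|\pi)$), I may assume $T$ is invertible.

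The first step pins down the atomic structure. The $T$-equivariance $T_*\mu_y=\mu_{Sy}$ sends atoms to atoms of equal mass, so the function $f(x):=\mu_{\pi_1(x)}(\{x\})$ is $T$-invariant, and by ergodicity $f\equiv c$ $\mu$-a.s.\ for some $c\geq 0$. The standing assumption forces $c>0$; setting $N:=1/c\in\mathbb{N}$, we have $\mu_y=\tfrac{1}{N}\sum_{i=1}^N\delta_{x_i(y)}$ for $\nu$-a.s.\ $y$. The disintegration formula of \Cref{22-07-06-01} then gives
\begin{equation*}
(\mu\times_Y\mu)(\Delta_X)=\int_Y(\mu_y\times\mu_y)(\Delta_X)\,d\nu(y)=\int_Y Nc^2\,d\nu(y)=\frac{1}{N},
\end{equation*}
where $\Delta_X=\{(x,x):x\in X\}\subset X\times X$.

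The second step invokes the relative Rokhlin-type theorem: whenever $\pi_1^{-1}\mathcal{Y}=\mathcal{P}_\mu(\pi^{-1}\mathcal{Z})$ and $h_\mu(T|\pi)>0$, the system $(X\times X,\mu\times_Y\mu,T\times T)$ is ergodic. Since $\Delta_X$ is $T\times T$-invariant, ergodicity forces $(\mu\times_Y\mu)(\Delta_X)\in\{0,1\}$. If $N\geq 2$, this is the immediate contradiction $1/N\in(0,1)$. If $N=1$, then $\mu_y=\delta_{x(y)}$, so $\pi_1$ is a mod-$\mu$ isomorphism and $\pi_1^{-1}\mathcal{Y}=\mathcal{X}=\mathcal{P}_\mu(\pi^{-1}\mathcal{Z})$; the standard equivalence $\mathcal{P}_\mu(\mathcal{S})=\mathcal{X}\iff h_\mu(T|\mathcal{S})=0$ (any $\mathcal{X}$-measurable partition is then approximable by partitions each of whose atoms has zero relative entropy, hence itself has zero relative entropy) yields $h_\mu(T|\pi)=0$, contradicting the hypothesis.

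The main obstacle is the ergodicity of $\mu\times_Y\mu$ used in the second step. This is the relative version of the classical Rokhlin--Sinai-type theorem that the Pinsker factor is characterized as the largest factor whose fibered self-product over the base is ergodic, and its derivation in the relative setting used here is precisely what is carried out in \cite[Theorems 2.1 and 2.3]{BGKM} and \cite[Lemma 4.1]{ZHANG}; the remaining ingredients---the constant-$c$ conclusion via ergodicity and the atomic-diagonal mass computation---are routine.
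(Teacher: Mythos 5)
Your argument has a genuine gap at its central step. The paper gives no proof of Lemma~\ref{key-lem}; it cites \cite[Theorems 2.1 and 2.3]{BGKM} and \cite[Lemma 4.1]{ZHANG}, and the argument behind those citations is a direct entropy computation, not the ergodicity of the fibered self-joining. Your proof hinges on the claim that $(X\times X,\mu\times_Y\mu,T\times T)$ is ergodic whenever $Y$ is the relative Pinsker factor over $Z$ and $h_\mu(T|\pi)>0$, but you never prove this: you attribute it to the very references the paper cites for the whole lemma, and this attribution is inaccurate (BGKM work in the absolute setting, and Zhang's Lemma~4.1 is essentially the statement you are asked to prove, obtained there without any such ergodicity). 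In the relative setting the ergodicity you need amounts to the theorem that a relative c.p.e.\ extension of an ergodic system is a relatively weakly mixing extension --- a result at least as deep as Lemma~\ref{key-lem} itself --- and your motivating characterization (``the Pinsker factor is the largest factor whose fibered self-product over the base is ergodic'') is false as stated: the fibered self-product over the full $\sigma$-algebra is the diagonal joining, which is ergodic whenever $X$ is. A secondary problem is the reduction to the invertible case: the relative Pinsker $\sigma$-algebra of the natural extension over $Z$ is in general strictly larger than the lift of $\mathcal{P}_\mu(\pi^{-1}\mathcal{Z})$ (already in the absolute, zero-entropy, non-invertible case), non-atomicity of the fiber measures upstairs does not descend through $\Pi_X$ (push-forwards of non-atomic measures may be atomic), and your contradiction hypothesis does not lift; so ``preserves the relative Pinsker structure'' is unjustified. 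This detour is also unnecessary: since $T_*\mu_{\pi_1(x)}=\mu_{S\pi_1(x)}$, one has $f(Tx)=\mu_{\pi_1(x)}(T^{-1}\{Tx\})\geq f(x)$, and $T$-invariance of $\mu$ forces $f\circ T=f$ a.e., after which ergodicity gives constancy without invertibility.

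For comparison, the argument the paper is pointing to runs as follows and avoids both issues. Once one knows (by your first step, suitably repaired) that $\nu$-a.e.\ $\mu_y$ consists of exactly $N$ atoms of equal mass $1/N$, the extension $\pi_1$ is finite-to-one, hence $h_\mu(T|\pi_1)=0$: for any finite partition $\alpha$, the trace of $\bigvee_{i=0}^{n-1}T^{-i}\alpha$ on a fiber $\pi_1^{-1}(y)$ has at most $N$ cells, so $H_\mu\big(\bigvee_{i=0}^{n-1}T^{-i}\alpha\,\big|\,\pi_1^{-1}\mathcal{Y}\big)\leq\log N$ uniformly in $n$. Writing $\pi=\psi\circ\pi_1$ with $\psi:(Y,\mathcal{Y},\nu,S)\to(Z,\mathcal{Z},\eta,R)$ the intermediate factor map, the definition of the relative Pinsker $\sigma$-algebra gives $h_\nu(S|\psi)=0$, and the Abramov--Rohlin formula (Lemma~\ref{GAR}) yields $h_\mu(T|\pi)=h_\mu(T|\pi_1)+h_\nu(S|\psi)=0$, contradicting the hypothesis; note this treats $N=1$ and $N\geq 2$ uniformly, whereas your diagonal-mass computation forces a separate entropy argument at $N=1$ anyway. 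If you wish to keep your route, you must supply an actual proof (or a correct, independent reference) for the ergodicity of $\mu\times_Y\mu$ over the relative Pinsker factor; as written, that step is assumed rather than proved.
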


Finally, we give a proposition which describes  the relation between two different  relative Pinsker factors. The reader  can see a more general form in \cite[Theorem 0.4 (iii)]{MR1878075}.
\begin{prop}\label{22-03-06-03}
	Let  $\pi:(X,\mathcal{X},\mu,T)\rightarrow (Z,\mathcal{Z},\eta, R)$  be a factor map between two MDSs on standard probability spaces and $\pi_1: (X,\mathcal{X},\mu,T)\rightarrow (Y,\mathcal{Y},\nu, S)$ be the relative Pinsker factor map with respect to  $\pi^{-1}\mathcal{Z}$.  Denoting $\mu=\int_Y\mu_yd\nu(y)$ as the disintegration of $\mu$ relative to the factor $(Y,\mathcal{Y},\nu,S)$ and $$(X\times X,\mathcal{X}\times \mathcal{X},\mu \times_Y\mu, T\times T),$$ as the product of $(X,\mathcal{X},\mu,T)$ with itself relative to  factor $(Y,\mathcal{Y},\nu,S)$,  then one has that 
	\begin{align}
	\mathcal{P}_{\lambda}\big((\pi\circ\mathrm{Proj}_1)^{-1}\mathcal{Z}\big)=(\pi_1\circ\mathrm{Proj}_1)^{-1}\mathcal{Y}\pmod\lambda
	\end{align}
	where $\lambda=\mu\times_{Y}\mu$ and $\mathrm{Proj}_1: X\times X\to X$ is the projection to the first coordinate.
\end{prop}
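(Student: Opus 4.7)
The plan is to prove the two inclusions separately. Set $\tilde\pi = \pi\circ\mathrm{Proj}_1$ and $\tilde\pi_1 = \pi_1\circ\mathrm{Proj}_1$. Note that $\pi_1^{-1}\mathcal{Y} = \mathcal{P}_\mu(\pi^{-1}\mathcal{Z}) \supseteq \pi^{-1}\mathcal{Z}$, so there is a factor map $\psi_Z:(Y,\mathcal{Y},\nu,S)\to(Z,\mathcal{Z},\eta,R)$ with $\pi = \psi_Z\circ\pi_1$; from the disintegration $\lambda = \int_Y \mu_y\times\mu_y\,d\nu(y)$ we have $(\mathrm{Proj}_i)_*\lambda = \mu$, making each $\mathrm{Proj}_i$ a factor map, and since $\lambda$ concentrates on $R_{\pi_1}$, $\mathrm{Proj}_1^{-1}\pi_1^{-1}\mathcal{Y} = \mathrm{Proj}_2^{-1}\pi_1^{-1}\mathcal{Y}\pmod\lambda$.

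For the inclusion $\tilde\pi_1^{-1}\mathcal{Y}\subseteq\mathcal{P}_\lambda(\tilde\pi^{-1}\mathcal{Z})$, take a finite $\pi_1^{-1}\mathcal{Y}$-measurable partition $\alpha$ of $X$. Since $\pi_1^{-1}\mathcal{Y}$ is the relative Pinsker $\sigma$-algebra, $h_\mu(T,\alpha|\pi^{-1}\mathcal{Z})=0$. Both the partition $\mathrm{Proj}_1^{-1}\alpha$ and the $\sigma$-algebra $\tilde\pi^{-1}\mathcal{Z} = \mathrm{Proj}_1^{-1}\pi^{-1}\mathcal{Z}$ are pulled back by the factor map $\mathrm{Proj}_1$, so the conditional entropies agree: $h_\lambda(T\times T, \mathrm{Proj}_1^{-1}\alpha|\tilde\pi^{-1}\mathcal{Z}) = h_\mu(T,\alpha|\pi^{-1}\mathcal{Z}) = 0$. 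Letting $\alpha$ range over all such partitions yields the inclusion.

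For the reverse inclusion, first compute $h_\lambda(T\times T|\tilde\pi_1^{-1}\mathcal{Y}) = 0$: by the conditional independence of the two coordinates given $\tilde\pi_1^{-1}\mathcal{Y}$, product partitions $\alpha\times\beta$ satisfy $h_\lambda(T\times T,\alpha\times\beta|\tilde\pi_1^{-1}\mathcal{Y}) = h_\mu(T,\alpha|\pi_1^{-1}\mathcal{Y}) + h_\mu(T,\beta|\pi_1^{-1}\mathcal{Y}) = 0$, and approximating arbitrary finite partitions of $X\times X$ by product partitions covers the general case. Next set $\mathcal{Q} = \mathcal{P}_\lambda(\tilde\pi^{-1}\mathcal{Z})$ with associated intermediate factor $\sigma:(X\times X,\lambda)\to(W,\mathcal{W},\xi,Q)$ satisfying $\sigma^{-1}\mathcal{W}=\mathcal{Q}\pmod\lambda$; by the easier inclusion $\sigma$ factors through $\tilde\pi_1$, giving $\rho:W\to Y$ with $\tilde\pi_1 = \rho\circ\sigma$. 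Applying Lemma \ref{GAR} along the tower $X\times X\to W\to Y\to Z$ and separately along $X\times X\to Y\to Z$, and combining with $h_\lambda(T\times T|\sigma)=0$ from the Pinsker characterization, forces $h_\xi(Q|\rho)=0$.

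The hardest step is upgrading $h_\xi(Q|\rho)=0$ to $\mathcal{Q}=\tilde\pi_1^{-1}\mathcal{Y}\pmod\lambda$, since a zero-entropy extension $W\to Y$ need not be trivial in general. The resolution must exploit that $(X\times X,\lambda)$ is the \emph{relatively independent} joining of $(X,\mu,T)$ with itself over $(Y,\mathcal{Y},\nu,S)$: any factor strictly between $(Y,\nu,S)$ and $(X\times X,\lambda)$ sitting inside this joining is forced to come from the relative Pinsker algebras of the two coordinates, both of which coincide with $\tilde\pi_1^{-1}\mathcal{Y}$ modulo $\lambda$ by the opening paragraph. A clean way to make this rigorous is to invoke the general factorization of the relative Pinsker $\sigma$-algebra for relatively independent joinings established in \cite[Theorem 0.4(iii)]{MR1878075}; the present proposition is its specialization to the case where the common factor is itself the relative Pinsker factor of $\mu$ over the base.
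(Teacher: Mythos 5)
Your first inclusion $(\pi_1\circ\mathrm{Proj}_1)^{-1}\mathcal{Y}\subseteq\mathcal{P}_\lambda\big((\pi\circ\mathrm{Proj}_1)^{-1}\mathcal{Z}\big)$ is fine, but the reverse inclusion rests on a false entropy identity. You claim $h_\mu(T,\alpha|\pi_1^{-1}\mathcal{Y})=0$ for every finite partition $\alpha$, i.e.\ $h_\mu(T|\pi_1)=0$. This confuses the two sides of the Pinsker construction: the relative Pinsker factor is the largest intermediate factor having zero entropy \emph{over} $Z$ (the correct vanishing is $h_\nu(S|\psi_Z)=0$ for your map $\psi_Z:Y\to Z$), not a factor above which $X$ has zero relative entropy. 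In fact, by \eqref{22-03-05-03} of Lemma \ref{H-1}, $h_\mu(T,\alpha|\mathcal{P}_\mu(\pi^{-1}\mathcal{Z}))=h_\mu(T,\alpha|\pi^{-1}\mathcal{Z})$ for every finite $\alpha$, hence $h_\mu(T|\pi_1)=h_\mu(T|\pi)$; and in the only situation where Proposition \ref{22-03-06-03} is invoked (inside the proof of Theorem \ref{22-03-06-05}, where $h_\mu(T|\pi)>0$) one gets, via Lemma \ref{GAR} applied to $\mathrm{Proj}_1$ followed by $\pi_1$, that $h_\lambda\big(T\times T|(\pi_1\circ\mathrm{Proj}_1)^{-1}\mathcal{Y}\big)\geq h_\mu(T|\pi)>0$, not $0$. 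So the tower computation that was supposed to yield $h_\xi(Q|\rho)=0$ collapses at its first step.

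Even setting that aside, your argument does not finish on its own terms: you concede that $h_\xi(Q|\rho)=0$ would not force $\mathcal{Q}=(\pi_1\circ\mathrm{Proj}_1)^{-1}\mathcal{Y}\pmod\lambda$, and you close the gap by invoking \cite[Theorem 0.4(iii)]{MR1878075}. That citation is not an auxiliary tool here; it carries the entire content of the hard inclusion, and it is exactly how the paper treats Proposition \ref{22-03-06-03} (stated with this reference, no internal proof). So what your proposal establishes independently is only the easy inclusion; the difficult direction --- that the relatively independent self-joining over the relative Pinsker factor admits no sets of zero relative entropy over $Z$ beyond $(\pi_1\circ\mathrm{Proj}_1)^{-1}\mathcal{Y}$ --- is neither proved nor correctly approached by the vanishing claim above, and is simply delegated to the same reference the paper already cites.
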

\section{Two Fundamental Theorems}
\label{22-03-08-01}

Letting $X$ be a Borel subset of a Polish space $\tilde X$ and $\pi : (X,\mathcal{X},\mu, T)\rightarrow(Z,\mathcal{Z},\eta, R)$\footnote{In this place, $\mathcal{X}$ is the $\sigma$-algebra generated by the Borel subsets of $X$.} be a factor map between two MDSs on standard probability spaces, we prove that the stable sets are dense in the support of conditional measure with respect to the relative Pinsker factor,  namely  Theorem \ref{infinity}.  Additionally  assuming that $h_{\mu}(T|\pi)>0$ and $ (X,\mathcal{X},\mu, T)$ is ergodic, one has that the set of all separating pairs  has the full measure   under  the  product of 
conditional measure with respect to the relative Pinsker factor, namely Theorem \ref{22-03-06-05}.  

The reason why we don't directly  assume   that $(X,\mathcal{X},\mu, T)$ is a Polish system is that when we prove Theorem \ref{main1}, the random compact set may not be a Polish space but only a Borel subset of the whole space.

\subsection{Stable sets}
  The following theorem is a generalization  of \cite[Lemma 3.2]{WCZ}  to our setting. In the proof of it, the main difficulty is that the previous results depend on the compactness of the space.  Through some more detail observations, we can prove that, in non-compact systems, there also are many asymptotic pairs.
  \begin{thm}\label{infinity}
Let $X$ be a Borel subset of a Polish space $\tilde X$ and $\pi : (X,\mathcal{X},\mu, T)\rightarrow(Z,\mathcal{Z},\eta, R)$ be a factor map between two invertible MDSs on standard probability spaces.  Denote  $\pi_1 : (X,\mathcal{X},\mu, T)\rightarrow (Y,\mathcal{Y},\nu, S)$ as the Pinsker factor map  with respect to $\pi^{-1}\mathcal{Z}$ and $\mu=\int_Y\mu_yd\nu(y)$ as the disintegration of $\mu$ relative to  $(Y,\mathcal{Y},\nu, S)$.  For any infinite positive integer sequence $\ba=\{a_n\}_{n\in\mathbb{N}}$ $($i.e.  $\lim_{n\to+\infty}a_n=+\infty)$,  there exists a $\mu$-full measure subset $X_1$ such that  for any $x\in X_1$,
\begin{align}
\label{22-11-27-01}
\overline{W_{\ba}^s(x,T)\cap\supp(\mu_{\pi_1(x)})}=\supp(\mu_{\pi_1(x)})
\end{align}
where  $W_{\ba}^{s}(x,T)=\{y\in X: \lim_{n\to+\infty}d(T^{a_n}x, T^{a_n}y)=0\}$ and   $d$ is a compatible  complete metric on $\tilde X$.
\end{thm}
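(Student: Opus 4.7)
The case $h_\mu(T|\pi)=0$ is immediate: then $\mathcal{P}_\mu(\pi^{-1}\mathcal{Z})=\mathcal{X}$ modulo $\mu$, the factor $\pi_1$ is essentially the identity, $\mu_{\pi_1(x)}=\delta_x$ almost surely, and \eqref{22-11-27-01} holds trivially with $y=x$. Assume from now on that $h_\mu(T|\pi)>0$; by \cref{key-lem}, $\mu_y$ is then non-atomic for $\nu$-almost every $y$.

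\textbf{Reduction to a countable family of open sets.} Fix a countable open basis $\{V_k\}_{k\in\mathbb N}$ of the topology on $\tilde X$. Then \eqref{22-11-27-01} for a fixed $x$ is equivalent to the statement that for every $k$ with $\mu_{\pi_1(x)}(V_k)>0$ there exists $y\in V_k$ with $d(T^{a_n}x,T^{a_n}y)\to 0$. Setting
\[
B_k=\{x\in X:\mu_{\pi_1(x)}(V_k)>0\text{ and }W^s_\ba(x,T)\cap V_k=\emptyset\},
\]
it suffices to show $\mu(B_k)=0$ for each $k$, and then take $X_1:=X\setminus\bigcup_k B_k$.

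\textbf{Construction of stable points.} Fix $V\subset\tilde X$ open. For $\mu$-a.e.\ $x$ with $\mu_{\pi_1(x)}(V)>0$, the plan is to build $y\in V\cap W^s_\ba(x,T)$ by iterative refinement. At scale $m$, use tightness of $\mu$ on the Polish space $\tilde X$ to pick a compact $K_m\subset X$ with $\mu(K_m)>1-2^{-m}$ together with a finite measurable partition $\xi_m$ of $X$ whose pieces contained in $K_m$ have diameter $<1/m$. Positivity of $h_\mu(T|\pi)$ together with a Shannon--McMillan--Breiman type analysis and non-atomicity of $\mu_{\pi_1(x)}$ is expected to force the refined atoms of $\xi_m^{(N)}:=\bigvee_{i=1}^{N}T^{-a_i}\xi_m$ to carry $\mu_{\pi_1(x)}$-mass tending to zero as $N\to\infty$, while the condition $\mu_{\pi_1(x)}(V)>0$ guarantees that these atoms still hit $V$ in positive conditional mass. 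Pick $z_m\in V$ in the $\xi_m^{(N_m)}$-atom of $x$ for a suitable $N_m$: this yields $d(T^{a_j}x,T^{a_j}z_m)<1/m$ for every $j\le N_m$ with $T^{a_j}x\in K_m$. A diagonal Cauchy extraction together with completeness of $(\tilde X,d)$ produces $y\in\overline V$ with the required stability; to enforce $y\in V$, each $z_m$ should be chosen inside a relatively compact open subset of $V$, which is available because $\mu_{\pi_1(x)}$ is non-atomic and positive on $V$.

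\textbf{Main obstacle.} The principal difficulty is the non-compactness of $X$: the compact-case arguments (as in \cite{WCZ}) rely on uniform partition diameters and compactness-based subsequence extraction. In the present setting the sets $K_m$ only cover $X$ up to small $\mu$-mass, so the construction must additionally regulate the density of times $j\le N_m$ at which $T^{a_j}x\notin K_m$. This is delicate because Birkhoff's theorem along an arbitrary sequence $\ba$ is unavailable; I expect to handle it via a coordinated choice of $K_m$, $\xi_m$, and $N_m$ (for instance by enlarging $K_m$ to $\bigcap_{j=1}^{N_m}T^{-a_j}L_m$ for a compact $L_m$ of sufficiently large $\mu$-mass) so that the set of bad indices is provably sparse for $\mu$-a.e.\ $x$. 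This coordination is the main technical step of the argument.
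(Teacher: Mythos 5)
There is a genuine gap at the heart of your construction. Finding, for each scale $m$, a point $z_m\in V$ lying in the same atom of $\bigvee_{i=1}^{N_m}T^{-a_i}\xi_m$ as $x$ only controls $d(T^{a_j}x,T^{a_j}z_m)$ for the finitely many times $j\le N_m$ (and only at those times when the orbit sits in $K_m$); a diagonal/Cauchy limit $y$ of such points need not satisfy $\lim_{n\to+\infty}d(T^{a_n}x,T^{a_n}y)=0$, because at times beyond the window used to build $z_m$ you have no control whatsoever on where $z_m$ (hence $y$) is sent. Your proposed remedy --- arranging that the set of ``bad'' indices is \emph{sparse} --- is in any case insufficient: membership in $W_{\ba}^s(x,T)$ requires convergence along \emph{all} of $\ba$, not density-one convergence, so a density-type control would at best yield a mean-asymptotic statement, not \eqref{22-11-27-01}. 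You acknowledge that the coordination of $K_m,\xi_m,N_m$ is ``the main technical step,'' but that step is exactly what is missing, and it is where the actual work lies. The paper resolves it differently: Lemma \ref{partition} builds a single measurable generating partition $\alpha$ (from partitions of an increasing sequence of compact sets given by inner regularity, inserted at carefully chosen times $t_n$ via Lemma \ref{H-1}) so that, after a Borel--Cantelli argument, for $\mu$-a.e.\ $x$ \emph{every} pair of points in the atom $\alpha^-(x)$ is asymptotic along $\mathbb{N}$, while simultaneously $\bigcap_{n}T^{-n}\alpha^-=\mathcal{P}_\mu(\pi^{-1}\mathcal{Z})\pmod\mu$; then the decreasing martingale theorem (Lemma \ref{22-03-05-08}) and support monotonicity (Lemma \ref{22-06-25-01}) transport the full conditional mass of $\overline{W_{\ba}^s(x,T)\cap\supp(\mu_{\pi_1(x)})}$ from the measures $\mu_{n,x}$ to $\mu_{\pi_1(x)}$. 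No SMB-type argument and no positivity of entropy enter at all.

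Two further problems. First, your entropy dichotomy is both unnecessary and partly unjustified: the theorem does not assume ergodicity, while Lemma \ref{key-lem} (your source for non-atomicity of $\mu_y$ when $h_\mu(T|\pi)>0$) does; in a non-ergodic system with positive relative entropy your appeal to it fails, and in any case non-atomicity is not what the proof needs. Second, your reduction via the sets $B_k$ only asks for a stable point in $V_k$, whereas \eqref{22-11-27-01} requires stable points inside $V_k\cap\supp(\mu_{\pi_1(x)})$; the condition should be $W_{\ba}^s(x,T)\cap V_k\cap\supp(\mu_{\pi_1(x)})=\emptyset$, and your construction would then also have to place the limit point $y$ in the support, which your sketch does not address (the paper gets this for free since the conditional measures $\mu_{n,x}$ are supported in $\supp(\mu_{\pi_1(x)})$ by Lemma \ref{22-06-25-01}).
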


\begin{rem}
In  Theorem \ref{infinity}, for a probability measure $\hat{\mu}$ on $(X,\mathcal{X})$,  $\supp(\hat{\mu})$ is the smallest closed subset of $\tilde{X}$ with $\hat{\mu}(\supp(\hat{\mu}))=1$. The closure in \eqref{22-11-27-01} is with respect to the topology of $\tilde{X}$.
\end{rem}
In order to prove Theorem \ref{infinity}, let us begin with some lemmas.
\begin{lem}\label{PC}
Assume that $(X,\mathcal{X},\mu)$ is a Polish probability space. Then for any $A\in\mathcal{X}$ and $\epsilon>0$, there exists a compact subset $A_\epsilon\subset A$ such that $\mu(A\setminus A_\epsilon)\leq \epsilon$. 
\end{lem}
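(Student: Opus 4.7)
The statement is exactly the inner regularity (tightness) of Borel probability measures on Polish spaces, which is the classical theorem of Ulam together with the standard closed-set approximation of Borel measures on a metric space. My plan is to first produce a single compact set of measure close to $1$, then approximate $A$ from within by a closed set, and finally intersect the two.

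The first step is to establish tightness of $\mu$ itself: there exists a compact $K_0 \subset X$ with $\mu(X \setminus K_0) \leq \epsilon/2$. Since $X$ is Polish, fix a compatible complete metric $\rho$ and a countable dense sequence $\{x_j\}_{j \in \mathbb{N}}$. For each $n \in \mathbb{N}$, the open balls $B(x_j, 1/n)$ cover $X$, so one can choose $N_n$ large enough that
\[
\mu\Bigl(\bigcup_{j=1}^{N_n} \overline{B(x_j, 1/n)}\Bigr) \geq 1 - \frac{\epsilon}{2^{n+2}}.
\]
Setting $F_n := \bigcup_{j=1}^{N_n} \overline{B(x_j, 1/n)}$ and $K_0 := \bigcap_{n \geq 1} F_n$, the set $K_0$ is closed and totally bounded in the complete metric space $(X,\rho)$, hence compact, and $\mu(X \setminus K_0) \leq \sum_{n \geq 1} \epsilon/2^{n+2} \leq \epsilon/2$.

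The second step is inner regularity by closed sets. Let $\mathcal{D}$ be the collection of $B \in \mathcal{X}$ such that for every $\delta > 0$ there exist a closed set $C$ and an open set $U$ with $C \subset B \subset U$ and $\mu(U \setminus C) < \delta$. A routine verification shows $\mathcal{D}$ is a $\sigma$-algebra: closure under complement is immediate since the roles of $C$ and $U$ swap, and closure under countable unions follows by taking $C = \bigcup_{k \leq K} C_k$ with $K$ large and $U = \bigcup_{k} U_k$, choosing approximations of $B_k$ with tolerance $\delta/2^{k+1}$. Every closed set $F$ belongs to $\mathcal{D}$ because one can take $C = F$ and $U = \{x : \rho(x,F) < 1/n\}$, whose measures decrease to $\mu(F)$ as $n \to \infty$ by continuity of $\mu$. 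Hence $\mathcal{D} = \mathcal{X}$.

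Combining the two, apply the second step to $A$ with tolerance $\epsilon/2$ to obtain a closed $C \subset A$ with $\mu(A \setminus C) \leq \epsilon/2$, and define $A_\epsilon := C \cap K_0$. Then $A_\epsilon$ is a closed subset of the compact set $K_0$, hence compact, $A_\epsilon \subset A$, and
\[
\mu(A \setminus A_\epsilon) \leq \mu(A \setminus C) + \mu(X \setminus K_0) \leq \epsilon.
\]
The main (and only) substantive point is the tightness construction; everything else is a bookkeeping exercise. No step is truly delicate given that $(X,\rho)$ is complete and separable.
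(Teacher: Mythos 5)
Your proof is correct and amounts to the same thing the paper does: the paper simply cites the classical regularity/tightness theorem for Borel probability measures on Polish spaces (\cite[Theorem~1.3]{Billingsley}) together with standard measure-theoretic bookkeeping, and you have reproduced the standard proof of exactly that result (Ulam tightness via total boundedness, inner regularity by closed sets via the good-sets $\sigma$-algebra, then intersecting). No gaps; the only difference is that you prove the cited theorem rather than quoting it.
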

\begin{proof}
This lemma is a combining result from \cite[Theorem 1.3]{Billingsley} and the standard argument in measure theory.
\end{proof}

 
  Let $(X,\mathcal{X}, \mu, T)$ be a  MDS. A partition $\xi$ is called a measurable partition of $(X,\mathcal{X})$ if 
  $$\xi=\bigvee_{i\in I}\xi_i$$
   where $\{\xi_i\}_{ i\in I}$ is a countable family of finite measurable partitions.  Denote $\sigma(\xi)$ as the smallest 
sub-$\sigma$-algebra of $\mathcal{X}$ which contains $\xi$.  For  two measurable partitions $\xi_1$ and $\xi_2$ of $(X,\mathcal{X})$,  it can be shown that $\sigma(\xi_1)\vee\sigma(\xi_2) = \sigma(\xi_1\vee\xi_2)$ where $\sigma(\xi_1)\vee\sigma(\xi_2)$ is   the smallest sub-$\sigma$-algebra of $\mathcal{X}$ that contains the $\sigma$-algebras
$\sigma(\xi_1)$ and $\sigma(\xi_2)$, so there is no ambiguity
to denote $\sigma(\xi_1\vee\xi_2)$ by $\xi_1\vee\xi_2$. For convenience, we write $\xi_1\preceq\xi_2$ if  for any element of $\xi_2$ is contained in some element of $\xi_1$. For a  measurable partition $\xi$ of  $(X,\mathcal{X},\mu,T)$, put $\xi(x)$ as the element of $\xi$ containing $x$,
$$\xi^-=\bigvee_{n\in\mathbb{N}}T^{-n}\xi\quad \text{and}\quad \xi^T=\bigvee_{n\in\mathbb{Z}}T^{-n}\xi.$$
The measurable partition $\xi$  is called \emph{measurable generating partition} if $\xi^T=\mathcal{X} \pmod\mu$.  
Now, we recall \cite[Lemma 3.1, Theorem 3.3, Lemma 3.5 and Lemma 3.6]{ZHANG} as follows.

\begin{lem}\label{H-1}
For an invertible MDS $(X,\mathcal{X}, \mu, T)$ on standard probability space,  let $\alpha$, $\beta$, $\gamma$ be finite measurable partitions of $(X,\mathcal{X})$ and $\mathcal{S}$ be a  $T$-invariant sub-$\sigma$-algebra of $\mathcal{X}$. 
Then, we have 
\begin{enumerate}[(i)]
\item\label{22-03-05-02} If $\alpha\preceq\beta$,  then $\lim_{n\rightarrow+\infty}H_{\mu}(\alpha|\beta^-\vee T^{-n}\gamma^{-}\vee\mathcal{S})=H_\mu(\alpha|\beta^-\vee\mathcal{S}).$
\item\label{22-03-05-03} $H_\mu (\alpha|\alpha^- \vee \mathcal{P}_{\mu}(\mathcal{S}))=H_{\mu}(\alpha|\alpha^{-} \vee\mathcal{S})=h_\mu(T,\alpha|\mathcal{S})$ and 
\[h_\mu(T,\alpha\vee\beta|\mathcal{S})=h_\mu(T,\beta|\mathcal{S})+h_\mu(T,\alpha|\beta^T\vee \mathcal{S}).\]

\item \label{22-03-05-04}If  $\xi$ is  a measurable generating partition of $(X,\mathcal{X})$ with $\xi \supset \mathcal{S}\pmod \mu$, then
$$\bigcap_{n\in\mathbb{N}_0}T^{-n}\xi^-\supset \mathcal{P}_{\mu}(\mathcal{S})\pmod\mu.$$
\end{enumerate}
\end{lem}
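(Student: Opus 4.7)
The plan is to prove each of the three parts in turn, using the Martingale theorem (Theorem \ref{M}), the chain rule for conditional Shannon entropy, and the defining property of the relative Pinsker $\sigma$-algebra $\mathcal{P}_\mu(\mathcal{S})$. All three statements are classical in the absolute case (Rokhlin--Sinai), and the relative versions require only carrying $\mathcal{S}$ along as an auxiliary conditioning $\sigma$-algebra.

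For (i), I observe that $T^{-(n+1)}\gamma^-\subseteq T^{-n}\gamma^-$, so the $\sigma$-algebras $\mathcal{F}_n:=\beta^-\vee T^{-n}\gamma^-\vee\mathcal{S}$ form a decreasing sequence with intersection $\mathcal{F}_\infty:=\bigcap_n\mathcal{F}_n$. The reverse Martingale theorem applied to $1_A$ for each atom $A\in\alpha$ yields $\mathbb{E}(1_A|\mathcal{F}_n)\to\mathbb{E}(1_A|\mathcal{F}_\infty)$ both $\mu$-a.s.\ and in $L^1$; since $t\mapsto -t\log t$ is bounded and continuous on $[0,1]$, dominated convergence upgrades this to $H_\mu(\alpha|\mathcal{F}_n)\to H_\mu(\alpha|\mathcal{F}_\infty)$. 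To identify this limit with $H_\mu(\alpha|\beta^-\vee\mathcal{S})$, I use the hypothesis $\alpha\preceq\beta$ to show that $\mathbb{E}(1_A|\mathcal{F}_\infty)=\mathbb{E}(1_A|\beta^-\vee\mathcal{S})$ $\mu$-a.s.\ for every $A\in\alpha$, which reduces to the fact that the atoms of $\alpha$ are already controlled by $\beta^-\vee\mathcal{S}$, so the residual tail information in $T^{-n}\gamma^-$ contributes nothing in the limit.

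For (ii), the identity $H_\mu(\alpha|\alpha^-\vee\mathcal{S})=h_\mu(T,\alpha|\mathcal{S})$ is the relative form of Rokhlin's information formula: telescope $H_\mu\bigl(\bigvee_{i=0}^{n-1}T^{-i}\alpha\bigm|\mathcal{S}\bigr)$ using $T$-invariance of $\mathcal{S}$ into a sum of terms of the form $H_\mu\bigl(\alpha\bigm|\bigvee_{j=1}^{n-1-k}T^{-j}\alpha\vee\mathcal{S}\bigr)$, apply Theorem \ref{M} to pass each summand to $H_\mu(\alpha|\alpha^-\vee\mathcal{S})$, then Cesàro average. The chain rule $h_\mu(T,\alpha\vee\beta|\mathcal{S})=h_\mu(T,\beta|\mathcal{S})+h_\mu(T,\alpha|\beta^T\vee\mathcal{S})$ is obtained by iterating the finite chain rule $H(\eta_1\vee\eta_2|\mathcal{G})=H(\eta_2|\mathcal{G})+H(\eta_1|\eta_2\vee\mathcal{G})$ inside the defining limit for $h_\mu$, again invoking $T$-invariance of $\beta^T\vee\mathcal{S}$. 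The equality $H_\mu(\alpha|\alpha^-\vee\mathcal{P}_\mu(\mathcal{S}))=H_\mu(\alpha|\alpha^-\vee\mathcal{S})$ then reduces, via the first formula, to $h_\mu(T,\alpha|\mathcal{P}_\mu(\mathcal{S}))=h_\mu(T,\alpha|\mathcal{S})$, which follows from the chain rule together with the observation that every finite $\mathcal{P}_\mu(\mathcal{S})$-measurable partition $\zeta$ satisfies $h_\mu(T,\zeta|\mathcal{S})=0$ atom-by-atom by the very definition of $\mathcal{P}_\mu(\mathcal{S})$.

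For (iii), fix $A\in\mathcal{P}_\mu(\mathcal{S})$ and set $\alpha=\{A,A^c\}$. By definition $h_\mu(T,\alpha|\mathcal{S})=0$, so (ii) yields $H_\mu(\alpha|\alpha^-\vee\mathcal{S})=0$, i.e.\ $A\in\alpha^-\vee\mathcal{S}\pmod\mu$. Since $\xi$ is a measurable generating partition with $\xi\supset\mathcal{S}\pmod\mu$, I plan to approximate $A$ in $L^1(\mu)$ by sets measurable with respect to $\bigvee_{k=-N}^{N}T^{-k}\xi$ and then use part (i), applied to a refinement of $\alpha$ compatible with $\xi^T$, to promote the relation $A\in\alpha^-\vee\mathcal{S}$ to $A\in T^{-m}\xi^-\pmod\mu$ for every $m\geq 0$; this gives $A\in\bigcap_{m\geq 0}T^{-m}\xi^-\pmod\mu$, which is what is required. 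The main obstacle will be exactly this promotion step: making the single membership $A\in\alpha^-\vee\mathcal{S}$ uniform in $m$ along the orbit requires both the generating property of $\xi$ (to approximate $A$ arbitrarily well) and the continuity of conditional entropy from (i) (to guarantee the approximation survives the shifts $T^{-m}$).
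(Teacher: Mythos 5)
The paper itself contains no proof of this lemma: it is recalled verbatim from \cite{ZHANG} (Lemma 3.1, Theorem 3.3, Lemmas 3.5 and 3.6), so your attempt has to be measured against those arguments. Your part (ii) is essentially the standard telescoping/martingale/Ces\`aro argument and is acceptable as a sketch; parts (i) and (iii), however, each contain a genuine gap.

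In part (i) you correctly obtain $H_\mu(\alpha|\mathcal{F}_n)\to H_\mu(\alpha|\mathcal{F}_\infty)$ with $\mathcal{F}_\infty=\bigcap_n(\beta^-\vee T^{-n}\gamma^-\vee\mathcal{S})$, but the identification $\mathbb{E}(1_A|\mathcal{F}_\infty)=\mathbb{E}(1_A|\beta^-\vee\mathcal{S})$ is asserted rather than proved, and the reason you give --- ``the atoms of $\alpha$ are already controlled by $\beta^-\vee\mathcal{S}$'' --- is false: $\alpha\preceq\beta$ gives $\sigma(\alpha)\subseteq\sigma(\beta)$, not $\sigma(\alpha)\subseteq\beta^-\vee\mathcal{S}$ (take $\alpha=\beta$ a generator of a Bernoulli shift and $\mathcal{S}$ trivial; then $H_\mu(\alpha|\beta^-)=h_\mu(T)>0$). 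Moreover, a decreasing intersection of $\sigma$-algebras does not commute with a join, so $\mathcal{F}_\infty$ may be strictly larger than $\beta^-\vee\mathcal{S}$; the equality $H_\mu(\alpha|\mathcal{F}_\infty)=H_\mu(\alpha|\beta^-\vee\mathcal{S})$ is exactly the content of the lemma (the inequality $\leq$ is free by monotonicity, since $\mathcal{F}_n\supseteq\beta^-\vee\mathcal{S}$). In \cite{ZHANG} the reverse inequality is obtained by expanding $H_\mu\big(\bigvee_{i=0}^{n-1}T^{-i}\alpha\,\big|\,\cdot\big)$ with the chain rule, using $\alpha\preceq\beta$ to dominate each summand by a shifted copy of $H_\mu(\alpha|\beta^-\vee T^{-k}\gamma^-\vee\mathcal{S})$, and averaging; none of that combinatorial step is present in your argument, so the crucial inequality is simply missing.

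In part (iii) you explicitly leave the ``promotion step'' --- from $A\in\alpha^-\vee\mathcal{S}\pmod\mu$ to $A\in T^{-m}\xi^-\pmod\mu$ for every $m$ --- as a plan and name it the main obstacle. That step \emph{is} the theorem: it is the relative half of the Rokhlin--Sinai characterization, and it is precisely where part (i) enters in the source, by conditioning $\bigvee_{i=0}^{k}T^{-i}\alpha$ on $T^{-(k+m)}\xi^-\vee\mathcal{S}$, applying the chain rule with finite subfamilies of $\xi$ refining $\alpha$, and using part (i) to pass to the limit. (A smaller point: in both (ii) and (iii) the claim that $h_\mu(T,\zeta|\mathcal{S})=0$ for every finite $\mathcal{P}_\mu(\mathcal{S})$-measurable $\zeta$ is not literally ``by definition''; it needs subadditivity plus an approximation argument, though this part is routine.) As written, neither (i) nor (iii) is established.
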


Following ideas in \cite[Lemma 3.7]{ZHANG}, we prove the corresponding result for our setting. This is a key lemma to prove Theorem \ref{infinity}.
\begin{lem}\label{partition}
Let $X$ be a Borel subset of a Polish space $\tilde X$ and $(X,\mathcal{X},\mu, T)$ be an invertible MDS  and  $\mathcal{S}$ be a $T$-invariant sub-$\sigma$-algebra of $\mathcal{X}$. Then $(X, \mathcal{X}, T, \mu)$ admits a measurable generating partition $\alpha$ 
of $(X,\mathcal{X})$ with following properties:
\begin{enumerate}[(a)]
\item $\alpha\supset\mathcal{S}\pmod\mu$ and $\mathcal{P}_\mu(\mathcal{S})=\bigcap_{n\in\mathbb{N}_0}T^{-n}\alpha^-\pmod\mu$, 
\item  the set
$$\{x\in  X: \text{any pair of points belonging to $\alpha^{-}(x)$ is asymptotic along $\mathbb{N}$\footnotemark[1]}\}$$
is $\mu$-full measure.
\footnotetext[1]{A  pair $(x,y)\in X\times X$ is called asymptotic along  $\ba=\{a_n\}_{n\in\mathbb{N}}$ if $\lim_{n\to+\infty}d(T^{a_n}x,T^{a_n}y)=0$, where $d$ is the compatible complete metric on $\tilde{X}$,}
\end{enumerate}
\end{lem}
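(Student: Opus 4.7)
The plan is to construct $\alpha$ by refining a countable measurable partition that generates $\mathcal{S}$ with finite partitions built from an exhaustion of $X$ by compact sets of summable defect. Since $(X,\mathcal{X},\mu)$ is a standard probability space, fix a countable measurable partition $\xi$ with $\sigma(\xi)=\mathcal{S}\pmod\mu$. For each integer $k\geq 1$, apply \Cref{PC} to obtain a compact set $K_k\subset X$ with $\mu(X\setminus K_k)<1/k^2$; since $(K_k,d)$ is compact, cover it by finitely many balls of radius $1/(2k)$ and extract a finite Borel partition $\beta_k$ of $K_k$ whose atoms each have diameter at most $1/k$. Set $\alpha_k:=\beta_k\cup\{X\setminus K_k\}$, a finite Borel partition of $X$ with a single ``bad'' atom, and define $\alpha:=\xi\vee\bigvee_{k\geq 1}\alpha_k$, a measurable partition.

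For property (a), the inclusion $\alpha\supset\mathcal{S}\pmod\mu$ is immediate from $\xi\preceq\alpha$. To see that $\alpha$ is a measurable generating partition, note that Borel--Cantelli applied to the summable sequence $\{\mu(X\setminus K_k)\}_k$ implies that $\mu$-a.s.\ $x$ belongs to $K_k$ for all sufficiently large $k$, whence the shrinking diameters of $\beta_k$ collapse $\alpha(x)$ to the singleton $\{x\}$; in particular $\alpha^T=\mathcal{X}\pmod\mu$. Part (iii) of \Cref{H-1} then furnishes $\bigcap_{n\in\mathbb{N}_0}T^{-n}\alpha^-\supset\mathcal{P}_\mu(\mathcal{S})\pmod\mu$; the reverse inclusion is the standard fact that the remote past of a generating partition refining $\mathcal{S}$ carries zero relative entropy, and can be derived from parts (i)--(ii) of the same lemma by verifying $h_\mu(T,\eta\mid\bigcap_n T^{-n}\alpha^-)=h_\mu(T,\eta\mid\mathcal{S})$ for every finite partition $\eta$.

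For (b), let $X_1:=\{x\in X:T^k x\in K_k\text{ for all but finitely many }k\}$. By $T$-invariance of $\mu$ together with the summability $\sum_k\mu(X\setminus K_k)<\sum_k 1/k^2<\infty$, Borel--Cantelli gives $\mu(X_1)=1$. Fix $x\in X_1$ and let $k_0$ be such that $T^k x\in K_k$ for every $k\geq k_0$. For any $y_1,y_2\in\alpha^-(x)$ and every $k\geq 1$, the defining property of $\alpha^-$ forces $T^k y_j\in\alpha(T^k x)\subset\alpha_k(T^k x)$ for $j=1,2$; when $k\geq k_0$ this atom coincides with $\beta_k(T^k x)\subset K_k$ and has diameter at most $1/k$, so $d(T^k y_1,T^k y_2)\leq 1/k\to 0$. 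Hence any pair in $\alpha^-(x)$ is asymptotic along $\mathbb{N}$.

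The main obstacle is the non-compactness of $X$: each $\alpha_k$ contains a single ``bad'' atom $X\setminus K_k$ of uncontrolled diameter, and Poincar\'e recurrence would ordinarily drive orbits back into such atoms infinitely often. The decisive device is to choose $\mu(X\setminus K_k)$ summable in $k$ rather than, say, constant, so that Borel--Cantelli---in place of a Birkhoff-type density argument---forces $T^k x$ into the well-controlled region $K_k$ for all but finitely many $k$ on a set of full $\mu$-measure.
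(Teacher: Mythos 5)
Your Borel--Cantelli device and the verification of (b) are fine as far as they go, but the construction breaks property (a), and the step you describe as ``the standard fact that the remote past of a generating partition refining $\mathcal{S}$ carries zero relative entropy'' is exactly where the proof fails. By your own Borel--Cantelli argument, $\mu$-a.e.\ atom of $\alpha$ is a singleton. Since $\sigma(\alpha)$ is countably generated and its atoms are points a.e., the conditional measures of $\mu$ given $\sigma(\alpha)$ are point masses, so $\sigma(\alpha)=\mathcal{X}\pmod\mu$; and since $T$ is invertible, $T^{-1}\alpha$ also has singleton atoms a.e., whence already $\sigma(T^{-1}\alpha)=\mathcal{X}\pmod\mu$. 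Consequently $T^{-n}\alpha^-=\mathcal{X}\pmod\mu$ for every $n$, and $\bigcap_{n\in\mathbb{N}_0}T^{-n}\alpha^-=\mathcal{X}\pmod\mu$, which coincides with $\mathcal{P}_\mu(\mathcal{S})$ only when $h_\mu(T|\mathcal{S})=0$ --- the opposite of the regime in which the lemma is applied. For the same reason $\alpha^-(x)=\{x\}$ a.e., so your (b) is vacuous and the lemma loses the content needed later: in Theorem \ref{infinity} the disintegrations over $T^{-n}\alpha^-$ must decrease to the disintegration over the relative Pinsker algebra, and the atoms $\alpha^-(x)$ must be rich enough to produce nontrivial asymptotic pairs. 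The Rohlin--Sinai type identity you invoke (remote past of a generator equals the Pinsker algebra) is a theorem for generators of finite conditional entropy; your $\alpha$ has infinite entropy, and no such identity is available --- indeed your example shows it is false in general.

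The missing idea is the one the paper's proof takes from Zhang's construction: the fine partitions $\mathcal{U}_n=\xi_n\cup\{X\setminus X_n\}$ are not joined at time zero but are pulled back by inductively chosen lags, $\mathcal{V}_n=\mathcal{V}_{n-1}\vee T^{-t_{n-1}}\mathcal{U}_{n-1}$, where $t_{n}$ is chosen via Lemma \ref{H-1}(i) so that the increments $H_\mu(\mathcal{V}_m|\mathcal{V}_{n-1}^-\vee\mathcal{S})-H_\mu(\mathcal{V}_m|\mathcal{V}_{n}^-\vee\mathcal{S})$ are summably small. This keeps $\beta=\bigvee_n\mathcal{V}_n$ coarse at time zero, which is what makes the tail estimate $\bigcap_{n\in\mathbb{N}_0}(T^{-n}\beta^-\vee\mathcal{S})\subset\mathcal{P}_\mu(\mathcal{S})\pmod\mu$ provable, while $\alpha=\beta\vee\gamma$ is still generating over the full orbit and the atoms of $\alpha^-$ are still asymptotic (the small $\xi_i$-atoms are seen along the shifted times $m+t_i$, with the exceptional set controlled by the summable measures $2^{-i}$ rather than by your diagonal Borel--Cantelli). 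Without some such time-spacing and entropy bookkeeping, refining by all the $\alpha_k$ at time zero cannot yield both (a) and a nontrivial (b) simultaneously.
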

\begin{proof}
  By Lemma \ref{PC}, we can find a  sequence of compact subsets  $\{X_n\}_{n\in\mathbb{N}}$ of $X$ with $\mu(X_n)\geq 1-\frac{1}{2^n}$ and $X_n\subset X_{n+1}$ for any $n\in\mathbb{N}$. 
  Therefore, we can choose a  finite measurable partition $\xi_n$  of $(X_n,\mathcal{X}_n)$   for each $n\in\mathbb{N}$ where $\mathcal{X}_n=\{A\cap X_n:  A\in\mathcal{X}\}$,  with the following properties:
 \begin{enumerate}[(I)]
 \item  $\xi_n\preceq\xi_{n+1}$ for each $n\in\mathbb{N}$;
 \item\label{22-03-05-06} the maximal diameter of $\xi_{n}$ goes to zero as $n\rightarrow +\infty$, namely, $\lim_{n\to+\infty}\max\{\text{diam}(A): A\in\xi_n\}=0$. 
 \end{enumerate}
 For each $n\in\mathbb{N}$, denoting $\mathcal{U}_{n}=\xi_{n}\cup\{X\setminus X_n\}$ which is  a  finite measurable partition of $(X,\mathcal{X})$, then $\mathcal{U}_{n+1}\succeq \mathcal{U}_n$ and $\bigvee_{n\in\mathbb{N}} \mathcal{U}_{n}=\mathcal{X}\pmod\mu$.  
 
 Define inductively  $\mathcal{V}_n=\mathcal{V}_{n-1}\vee T^{-t_{n-1}}\mathcal{U}_{n-1}$ for $n\geq 2$, where $\mathcal{V}_1=\mathcal{U}_1$ and $t_n\in\mathbb{N}$ are to be chosen by using \eqref{22-03-05-02}  of Lemma \ref{H-1} such that
$$H_\mu(\mathcal{V}_m|\mathcal{V}_{n-1}^-\vee\mathcal{S})-H_\mu(\mathcal{V}_m|\mathcal{V}_{n}^-\vee\mathcal{S})<\frac{1}{m}\frac{1}{2^{n-m}},\qquad m=1,2,\cdots,n-1.$$
Fixing $m\in\mathbb{N}$, for any $t>m$, one has that 
\begin{align}
\notag&H_\mu(\mathcal{V}_m|\mathcal{V}_{m}^-\vee\mathcal{S})-H_\mu(\mathcal{V}_m|\mathcal{V}_{t}^-\vee\mathcal{S})
\\\label{fine}=&\sum_{i=m}^{t-1} \left(H_\mu(\mathcal{V}_m|\mathcal{V}_{i}^-\vee\mathcal{S})-H_\mu(\mathcal{V}_m|\mathcal{V}_{i+1}^-\vee\mathcal{S})\re)\leq\frac{1}{m}.
 \end{align}
Clearly, $\beta=\bigvee_{n\in\mathbb{N}}\mathcal{V}_n$ is a measurable generating partition of $(X,\mathcal{X})$. Due to \eqref{fine}, one has that
\begin{equation}\label{lim}
\lim_{m\rightarrow+\infty} \lk(H_\mu(\mathcal{V}_m|\mathcal{V}_{m}^-\vee\mathcal{S})-H_\mu(\mathcal{V}_m|\beta^-\vee\mathcal{S})\re)= 0.
\end{equation}
\begin{claim}
\label{22-03-26-01}
$\bigcap_{n\in\mathbb{N}_0}(T^{-n}\beta^-\vee\mathcal{S})\subset\mathcal{P}_\mu(\mathcal{S})\pmod\mu$.
\end{claim}
\begin{proof}
We only need to prove that for any a finite measurable partition $\xi$ of $(X,\mathcal{X})$ with $\sigma(\xi)\subset\bigcap_{n\in\mathbb{N}_0}(T^{-n}\beta^-\vee\mathcal{S})$,    one has that $\sigma(\xi)\subset \mathcal{P}_\mu(\mathcal{S})\pmod\mu$. By \eqref{22-03-05-03} of Lemma \ref{H-1} and $\xi^{T}\subset \beta^-\vee\mathcal{S}$, one has
\begin{align*}
H_{\mu}(\xi|\xi^{-}\vee\mathcal{S})=& H_{\mu}(\mathcal{V}_m\vee\xi|\mathcal{V}_m^-\vee\xi^{-}\vee\mathcal{S})-H_{\mu}(\mathcal{V}_m|\mathcal{V}_m^-\vee\xi^{T}\vee\mathcal{S})\\
\leq&H_{\mu}(\xi|\mathcal{V}_m^-\vee\mathcal{S})+H_{\mu}(\mathcal{V}_m|\mathcal{V}_m^-\vee\mathcal{S})-H_{\mu}(\mathcal{V}_m| \beta^-\vee\mathcal{S}).
\end{align*}
As $m\rightarrow+ \infty$, by (\ref{lim}) we have
$$h_{\mu}(T,\xi|\mathcal{S})=H_{\mu}(\xi|\xi^-\vee\mathcal{S})\leq H_\mu(\xi|\beta^-\vee\mathcal{S})=0,$$
 which implies the claim.
\end{proof}
Take a measurable partition $\gamma$ of $(X, \mathcal{X})$ such that $\sigma(\gamma)=\mathcal{S}\pmod\mu$. Then $\alpha=\beta\vee\gamma$ is the required measurable partition.   Indeed,  $\alpha$ is a measurable generating partition with $\sigma(\alpha)\supseteq \mathcal S\pmod\mu$. By  \eqref{22-03-05-04} of Lemma \ref{H-1},  we have $\mathcal{P}_{\mu}(\mathcal S)\subset\bigcap_{n\in\mathbb{N}_0}T^{-n}\alpha^-\pmod\mu$. On the other hand, by  Claim \ref{22-03-26-01}, one has that
$$\bigcap_{n\in\mathbb{N}_0}T^{-n}\alpha^-=\bigcap_{n\in\mathbb{N}_0}(T^{-n}\beta^-\vee T^{-n}\gamma^{-})\subset\bigcap_{n\in\mathbb{N}_0}(T^{-n}\beta^-\vee\mathcal{S})\subset \mathcal{P}_\mu(\mathcal{S})\pmod\mu.$$
Therefore, $\mathcal{P}_{\mu}(\mathcal S)=\bigcap_{n\in\mathbb{N}_0}T^{-n}\alpha^-\pmod\mu$. Moreover, we have the following claim. 
\begin{claim}\label{2}
If  $x\notin\bigcup_{m\in\mathbb{N}}\bigcap_{j\in\mathbb{N}}\bigcup_{i\geq j}T^{-m-t_k}(X\setminus X_i)$,     then for any pair of    points  belonging to $\alpha^{-}(x)$  is asymptotic along $\mathbb{N}$. 
\end{claim}
\begin{proof}
Since  
$$\bigvee_{m\in\mathbb{N}}T^{-m}\alpha\succeq\bigvee_{m\in\mathbb{N}}\bigvee_{j\in\mathbb{N}}T^{-m}\mathcal{V}_j\succeq \bigvee_{m\in\mathbb{N}}\bigvee_{j\in\mathbb{N}}T^{-m-t_{j}}\mathcal{U}_{j},$$
 there exist $A_n\in\mathcal{U}_n$ for each $n\in\mathbb{N}$ such that $x\in \alpha^{-}(x)\subset\bigcap_{m\in\mathbb{N}}\bigcap_{j\in\mathbb{N}}T^{-m-t_j}A_j$. By assumption that $x\notin\bigcup_{m\in\mathbb{N}}\bigcap_{j\in\mathbb{N}}\bigcup_{i\geq j}T^{-m-t_i}(X\setminus X_i)$,  there  exists a strictly monotone increasing  positive integer sequence $\{k_n\}_{n=1}^\infty$ such that  $A_{k_n}\in\xi_{k_n}$ for each $n\in\mathbb{N}$.   Due to \eqref{22-03-05-06},  Claim \ref{2} holds.
\end{proof}
 Finally, Combining Claim \ref{2} and the fact that
 \begin{align*}
  \mu\left(\bigcup_{m\in\mathbb{N}}\bigcap_{j\in\mathbb{N}}\bigcup_{i\geq j}T^{-m-t_i}(X\setminus X_i)\right)&\leq \sum_{m\in\mathbb{N}} \mu\left(\bigcap_{j\in\mathbb{N}}\bigcup_{i\geq j}T^{-m-t_i}X_i^c\right)
  \\&=\sum_{m\in\mathbb{N}} \lim_{j\to+\infty}\mu\left(\bigcup_{i\geq j}T^{-m-t_i}X_i^c\right)
  \\&\leq \sum_{m\in\mathbb{N}} \lim_{j\to+\infty}\sum_{i\geq j}\frac{1}{2^i}=0.
  \end{align*}
We finish the proof of Lemma \ref{partition}.
  \end{proof}

Following result  is a corollary of  \cite[Corollary 5.21]{EW} and \cite[Theorem 3.17]{C}. For completeness, we present a  direct proof here.
\begin{lem}
\label{22-03-05-08}
Let $(X,\mathcal{X},\mu)$ be  a Polish probability space, $\{\mathcal{S}_n\}_{n\in\mathbb{N}}$ be a decreasing sub-$\sigma$-algebra of $\mathcal{X}$ and $\mathcal{S}=\bigcap_{n\in\mathbb{N}}\mathcal{S}_n$. Denote $\mu=\int_X\mu_{n,x}d\mu(x)$ and $\mu=\int_X\mu_{\infty,x}d\mu(x)$ as the disintegrations  relative to $\mathcal{S}_n$ and $\mathcal{S}$, respectively. Then for $\mu$-a.s. $x\in X$, the following inequality 
\begin{align}
\liminf_{n\to+\infty}\mu_{n,x}(U)\geq\mu_{\infty,x}(U)
\end{align}
 holds for any open subset $U$ of $X$.
\end{lem}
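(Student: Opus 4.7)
The plan is to combine the Martingale theorem (Theorem \ref{M}) with the disintegration identity \eqref{meas3} applied to a countable family of continuous test functions, and then pass from these tests to an arbitrary open set by monotone approximation. First, fix a complete compatible metric $d$ on $X$ and a countable base $\{V_m\}_{m\in\mathbb{N}}$ for its topology, which exists because $X$ is Polish and hence second countable. For each pair $(m,k)\in\mathbb{N}^2$ put
\[\psi_{m,k}(y)=\min\{1,\,k\cdot d(y, X\setminus V_m)\},\]
so that $\psi_{m,k}$ is continuous, $[0,1]$-valued, and $\psi_{m,k}\uparrow \mathbf{1}_{V_m}$ as $k\to+\infty$. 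Let $\mathcal{F}_0$ denote the countable family consisting of all finite maxima drawn from $\{\psi_{m,k}\}_{m,k\in\mathbb{N}}$.

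For each $f\in\mathcal{F}_0$, Theorem \ref{M} applied to the decreasing filtration $\{\mathcal{S}_n\}$ yields $\mathbb{E}(f|\mathcal{S}_n)\to\mathbb{E}(f|\mathcal{S})$ $\mu$-a.s., and the disintegration identity \eqref{meas3} rewrites this as $\int f\,d\mu_{n,x}\to\int f\,d\mu_{\infty,x}$ on a full measure set depending on $f$. Taking the intersection of these countably many full measure sets (over all $f\in\mathcal{F}_0$ and, for the disintegration identities, over all $n\in\mathbb{N}\cup\{\infty\}$) produces a single $\mu$-full measure set $X_0\subset X$ on which $\int f\,d\mu_{n,x}\to\int f\,d\mu_{\infty,x}$ holds simultaneously for every $f\in\mathcal{F}_0$. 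To conclude, fix $x\in X_0$ and an open set $U\subset X$. Since $\{V_m\}$ is a base for the topology, the set $J_U:=\{(m,k)\in\mathbb{N}^2:V_m\subset U\}$ is countable and $\mathbf{1}_U=\sup_{(m,k)\in J_U}\psi_{m,k}$ pointwise on $X$. Enumerating $J_U=\{(m_j,k_j)\}_{j\in\mathbb{N}}$ and setting $g_J=\max_{j\leq J}\psi_{m_j,k_j}\in\mathcal{F}_0$ gives $g_J\leq \mathbf{1}_U$ and $g_J\uparrow\mathbf{1}_U$. The choice of $X_0$ therefore yields
\[\liminf_{n\to+\infty}\mu_{n,x}(U)\geq\lim_{n\to+\infty}\int g_J\,d\mu_{n,x}=\int g_J\,d\mu_{\infty,x}\]
for each $J\in\mathbb{N}$, and monotone convergence on the right as $J\to+\infty$ delivers $\liminf_{n\to+\infty}\mu_{n,x}(U)\geq\mu_{\infty,x}(U)$, as required.

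The main obstacle is coordinating the null sets: the Martingale theorem produces an exceptional set that depends on the test function, so a naive application over all open sets of $X$ would require excluding an uncountable union of null sets. The Polish structure of $X$ (in particular second countability together with metrizability) is exactly what allows every open set to be realized as a pointwise monotone limit drawn from the single countable family $\mathcal{F}_0$, thereby concentrating all exceptional behaviour into one $\mu$-full measure set $X_0$ that is independent of $U$.
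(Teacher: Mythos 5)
Your proof is correct and follows essentially the same strategy as the paper: apply the Martingale theorem together with the disintegration identity to a countable family of test functions indexed by a countable base, collect the countably many null sets into a single $\mu$-full set, and then recover an arbitrary open set by inner monotone approximation from that family. The only (harmless) difference is that you route through continuous Urysohn-type functions $\psi_{m,k}$ and their finite maxima, whereas the paper applies the Martingale theorem directly to indicators of finite unions of base elements and then uses monotonicity of the limit measure.
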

\begin{proof}
 Let $\{A_n\}_{n\in\mathbb{N}}$ be a countable topological basic of $X$.  By  Theorem \ref{M}, there exists $\mu$-full measure subset $X_1$ such that for any $x\in X_1$, we have 
\begin{align}
\lim_{n\to+\infty}\mu_{n,x}\left(\bigcup_{i\in I}A_i\right)=\mu_{\infty,x}\left(\bigcup_{i\in I}A_i\right)
\end{align}
for any finite subset $I\subset\mathbb{N}$. For any $x\in X_1$ and open subset $U:=\bigcup_{i\in\mathbb{N}}A_{k_i}$, one has 
\begin{align*}
\liminf_{n\to+\infty}\mu_{n,x}(U)&=\liminf_{n\to+\infty}\mu_{n,x}\left(\bigcup_{i\in\mathbb{N}}A_{k_i}\right)
\\&\geq\liminf_{m\to+\infty}\liminf_{n\to+\infty}\mu_{n,x}\left(\bigcup_{i=1}^{m}A_{k_i}\right)
\\&=\liminf_{m\to+\infty}\mu_{\infty,x}\left(\bigcup_{i=1}^{m}A_{k_i}\right)
\\&=\mu_{\infty,x}(U).
\end{align*}
This finishes the proof of Lemma \ref{22-03-05-08}.
\end{proof}
If $X$ is a compact metric space,   the following result   appeared in \cite[Lemma 2.1]{HLY}. By the analogous argument as \cite[Lemma 2.1]{HLY}, it still holds for our setting. We omit the proof here.
\begin{lem}\label{22-06-25-01}
Let $(X,\mathcal{X},\mu)$ a Polish probability space and $\mathcal{S}_2\subset \mathcal{S}_1$ be  two sub-$\sigma$-algebras of $\mathcal{X}$. Denote 
$\mu=\int_X\mu_{i,x}d\mu(x)$  as  the disintegration of $\mu$ relative to  $\mathcal{S}_i$ for $i=1,2$, respectively. Then $\supp(\mu_{1,x})\subset \supp(\mu_{2,x})$ for $\mu$-a.s. $x\in X$. 
\end{lem}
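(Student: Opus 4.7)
The plan is to exploit that $X$ is Polish (hence second countable) so it is enough to compare the two disintegrations on a countable family of open sets, and to use the tower property corresponding to $\mathcal{S}_2\subset\mathcal{S}_1$ to push a negligibility statement from $\mu_{2,x}$ to $\mu_{1,x}$. Fix a countable base $\{U_n\}_{n\in\mathbb{N}}$ for the topology of $X$. For each $n$ the function $x\mapsto \mu_{2,x}(U_n)=\mathbb{E}(\mathbf{1}_{U_n}\mid\mathcal{S}_2)(x)$ is (a version of) an $\mathcal{S}_2$-measurable function, hence the set
\[
N_n:=\{x\in X:\mu_{2,x}(U_n)=0\}
\]
lies in $\mathcal{S}_2$ up to a $\mu$-null set. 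Since $\mathcal{S}_2\subset\mathcal{S}_1$, the set $N_n$ is also $\mathcal{S}_1$-measurable up to null sets.

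Next I would compute $\int_{N_n}\mu_{1,x}(U_n)\,d\mu(x)$ in two ways. On the one hand, using the characterization of the disintegration via conditional expectation and $\mathbf{1}_{N_n}\in\mathcal{S}_1$,
\[
\int_{N_n}\mu_{1,x}(U_n)\,d\mu(x)=\int \mathbf{1}_{N_n}\mathbb{E}(\mathbf{1}_{U_n}\mid\mathcal{S}_1)\,d\mu=\mu(N_n\cap U_n).
\]
On the other hand, using instead $\mathbf{1}_{N_n}\in\mathcal{S}_2$,
\[
\mu(N_n\cap U_n)=\int \mathbf{1}_{N_n}\mathbb{E}(\mathbf{1}_{U_n}\mid\mathcal{S}_2)\,d\mu=\int_{N_n}\mu_{2,x}(U_n)\,d\mu(x)=0,
\]
by the very definition of $N_n$. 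Combining the two identities gives $\int_{N_n}\mu_{1,x}(U_n)\,d\mu(x)=0$, so for $\mu$-a.e.\ $x\in N_n$ one has $\mu_{1,x}(U_n)=0$.

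Taking the intersection over the countable family $\{U_n\}_{n\in\mathbb{N}}$, there is a $\mu$-full-measure set $X_0$ such that for every $x\in X_0$ and every $n\in\mathbb{N}$, the implication $\mu_{2,x}(U_n)=0\Rightarrow\mu_{1,x}(U_n)=0$ holds. I would then finish as follows: fix $x\in X_0$ and $y\notin \supp(\mu_{2,x})$; choose an open neighborhood $V$ of $y$ with $\mu_{2,x}(V)=0$, and pick a basic open set $U_n$ with $y\in U_n\subset V$; then $\mu_{2,x}(U_n)=0$, hence $\mu_{1,x}(U_n)=0$, so $y\notin \supp(\mu_{1,x})$. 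This gives $\supp(\mu_{1,x})\subset\supp(\mu_{2,x})$ on $X_0$.

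The argument has essentially no hard step: the only thing to watch is that $X$ is Polish (and not merely separable metric) so that a countable base exists and the supports of the conditional measures are well-defined (the $\mathcal{S}_1$-measurability of $x\mapsto\mu_{1,x}(U_n)$ and the $\mathcal{S}_2$-measurability of $x\mapsto\mu_{2,x}(U_n)$ are part of the disintegration statement recalled in Section~\ref{22-07-06-01}). The main conceptual point is the double reading of $\mu(N_n\cap U_n)$ via both conditional expectations, which is precisely where the hypothesis $\mathcal{S}_2\subset\mathcal{S}_1$ is used.
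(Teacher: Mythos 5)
Your proof is correct, and it is essentially the argument the paper has in mind: the paper omits the proof, stating that the compact-case argument of \cite[Lemma 2.1]{HLY} carries over to the Polish setting, and your double reading of $\mu(N_n\cap U_n)$ through $\mathbb{E}(\cdot|\mathcal{S}_1)$ and $\mathbb{E}(\cdot|\mathcal{S}_2)$ on a countable base (replacing compactness by second countability) is precisely that adaptation. No gaps to report.
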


With help of lemmas above, we give the proof of  Theorem \ref{infinity} as follows.
\begin{proof}[Proof of Theorem \ref{infinity}]
 Recall that $X$ is a Borel subset of a Polish space $\tilde{X}$, $\pi : (X,\mathcal{X},\mu, T)\rightarrow(Z,\mathcal{Z},\eta, R)$ is a factor map between two MDSs on standard probability spaces, $\pi_1 : (X,\mathcal{X},\mu, T)\rightarrow (Y,\mathcal{Y},\nu, S)$ is the Pinsker factor  map  with respect to $\pi^{-1}\mathcal{Z}$ and $\mu=\int_Y\mu_yd\nu(y)$ be the disintegration of $\mu$ relative to the factor $(Y,\mathcal{Y},\nu, S)$. 

Fix an infinite positive integer sequence $\ba=\{a_n\}_{n\in\mathbb{N}}$.	
By Lemma \ref{partition}, there exists a  measurable partition $\alpha$ such that 
$$\mathcal{P}_{\mu}(\pi^{-1}\mathcal{Z})=\bigcap_{n\in\mathbb{N}_0}T^{-n}\alpha^-\pmod\mu$$ and  for $\mu$-a.s. $x\in X$, any pair of points belonging to  $\alpha^-(x)$  is asymptotic along $\mathbb{N}$. Hence $(T^{-n}\alpha^-)(x)\subset W_{\ba}^s(x,T)$ for each $n\in \mathbb{N}_0$ and $\mu$-a.s. $x\in X$. For any $n\in\mathbb{N}_0$,  let $\mu=\int_X\mu_{n,x}d\mu(x)$ be the disintegration of $\mu$ relative to $T^{-n}\alpha^-$. Following from the definition of the measure disintegration,  for any $n\in\mathbb{N}_0$ and $\mu$-a.s. $x\in X$, one has that  
\begin{equation}\label{4}
\mu_{n,x}(W_{\ba}^s(x,T))=1.
\end{equation}
Note that
$$\alpha^-\supset T^{-1}\alpha^-\supset T^{-2}\alpha^-\supset\cdots \text{ and } \bigcap_{n\in\mathbb{N}_0}T^{-n}\alpha^-=\mathcal{P}_{\mu}(\pi^{-1}\mathcal{Z})\pmod\mu.$$ 
Applying   Lemma \ref{22-03-05-08} and  Lemma \ref{22-06-25-01} on $\{T^{-n}\alpha^-\}_{n\in\mathbb{N}_0}$ and $\mathcal{P}_{\mu}(\pi^{-1}\mathcal{Z})$ for the Polish probability space $(\tilde{X},\tilde{\mathcal{X}},\mu)$ where $\tilde{\mathcal{X}}$ is the Borel-$\sigma$ algebra of $\tilde{X}$,  there exists a $\mu$-full measure subset  $X'\subset X$  such that for any $x\in X'$, any closed subset $F$ of $\tilde{X}$ and any $m\in\mathbb{N}$,  one has that
 \begin{equation}\label{3}
\limsup_{n\to+\infty}\mu_{n,x}(F)\leq\mu_{\pi_1(x)}(F),
\end{equation}
and 
\begin{equation}\label{supp}
\supp(\mu_{m,x})\subseteq \supp(\mu_{m+1,x})\subseteq\cdots\subseteq \supp(\mu_{\pi_1(x)}).
\end{equation}
  By \eqref{4} and \eqref{supp}, one has that for $x\in X'$ and any $n\in\mathbb{N}_0$,
$$\mu_{n,x}(\overline{W_{\ba}^s(x,T)\cap \supp(\mu_{\pi_1(x)})})\geq\mu_{n,x}(W_{\ba}^s(x,T)\cap \supp(\mu_{\pi_1(x)}))=1,$$ 
which implies that
$$\mu_{\pi_1(x)}(\overline{W_{\ba}^s(x,T)\cap \supp(\mu_{\pi_1(x)})})\overset{\eqref{3}}\geq\limsup_{n\to+\infty}\mu_{n,x}(\overline{W_{\ba}^s(x,T)\cap \supp(\mu_{\pi_1(x)})})=1.$$ Therefore,  $\supp(\mu_{\pi_1(x)})\subseteq \overline{W_{\ba}^s(x,T)\cap \supp(\mu_{\pi_1(x)})}$.

On the other hand, as  for any $x\in X'$,  it is clear that $\supp(\mu_{\pi_1(x)})\supseteq \overline{W_{\ba}^s(x,T)\cap\supp(\mu_{\pi_1(x)})}$. This completes the proof of Theorem \ref{infinity}.
\end{proof}

\subsection{Separating pairs}
In this subsection,   borrowing the  ideas in \cite{liu2022pinsker}, we  prove Theorem \ref{22-03-06-05}. In the proof of  \cite[Theorem 1.4]{liu2022pinsker}, the similar result has been  established for non-relative   factor case in compact systems. 
\begin{thm}\label{22-03-06-05}
	Let $X$ be a Borel subset of a Polish space $\tilde{X}$ and $\pi:(X,\mathcal{X},\mu,T)\rightarrow (Z,\mathcal{Z},\eta, R)$  be a factor map between two invertible ergodic MDSs on standard probability spaces.  Denote  $\pi_1: (X,\mathcal{X},\mu,T)\rightarrow (Y,\mathcal{Y},\nu, S)$ as the relative Pinsker factor map with respect to $\pi^{-1}\mathcal{Z}$, $\mu=\int_Y\mu_yd\nu(y)$ as the disintegration of $\mu$ relative to  the factor $ (Y,\mathcal{Y},\nu, S)$.  If $h_{\mu}(T|\pi)>0$, then for  any infinite  positive integer sequence $\ba=\{a_i\}_{i\in\mathbb{N}}$ $($i.e. $\lim_{i\to+\infty}a_i=+\infty)$,  there exists a $\nu$-full measure set $Y_0$ such that for any $y\in Y_0$, there exists  a  positive constant $\delta_y$ such that
	\begin{align*}
	\mu_y\times\mu_y(\mathcal{W}_{\ba}(X,\delta_y))=1,
	\end{align*}
	where 
	\begin{align*}
	\label{22-05-16-01}
	\mathcal{W}_{\ba}(X,\delta_y)=\Big\{(x_1, x_2)\in X\times X: \limsup_{n\to\infty}\frac{1}{n}\sum_{i=1}^nd(T^{a_i}x_1, T^{a_i}x_2)>\delta_y\Big\},
	\end{align*}
	$d$ is a compatible  complete metric on $\tilde X$.
\end{thm}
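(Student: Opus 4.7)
My plan is to move to the product system $(X\times X,\lambda,T\times T)$ with $\lambda:=\mu\times_Y\mu$ and to leverage the relative Pinsker structure provided by Proposition \ref{22-03-06-03}. First I would replace $d$ by the bounded equivalent $\bar{d}:=\min(d,1)$; this is harmless since $\mathcal{W}_{\ba}(X,\delta_y)$ is defined by a $\limsup$-inequality, and $\bar{d}\leq 1$ lets me invoke reverse Fatou freely. Lemma \ref{key-lem} applied with $\mathcal{S}=\pi^{-1}\mathcal{Z}$ and the hypothesis $h_\mu(T|\pi)>0$ gives that $\mu_y$ is non-atomic for $\nu$-a.e.\ $y$; consequently the diagonal $\{(x,x):x\in X\}$ has $\mu_y\times\mu_y$-measure zero and
\begin{align*}
g(y):=\int \bar{d}(x_1,x_2)\, d(\mu_y\times\mu_y)(x_1,x_2)
\end{align*}
is strictly positive on a $\nu$-full measure subset of $Y$.

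By Proposition \ref{22-03-06-03}, the factor $\pi_1\circ\mathrm{Proj}_1:(X\times X,\lambda,T\times T)\to(Y,\nu,S)$ is a relative $K$-extension (relative to $(Z,\eta,R)$). I plan to use its quantitative consequence, covariance decay of the centred observable $\tilde{f}:=\bar{d}-g\circ\pi_1\circ\mathrm{Proj}_1$ (whose fibrewise expectation vanishes): $\int\tilde{f}\cdot\tilde{f}\circ(T\times T)^n\, d\lambda\to 0$ as $|n|\to\infty$. Setting $F_N:=\tfrac{1}{N}\sum_{i=1}^N\tilde{f}\circ(T\times T)^{a_i}$, the variance bound
\begin{align*}
\int F_N^2\, d\lambda=\tfrac{1}{N^2}\sum_{i,j=1}^N\int\tilde{f}\cdot\tilde{f}\circ(T\times T)^{a_j-a_i}\, d\lambda,
\end{align*}
combined with a pigeonhole on the pairs with $|a_j-a_i|\leq M$ (valid after passing to a strictly increasing subsequence of $\ba$, possible since $a_i\to\infty$), yields $\int F_N^2\, d\lambda\to 0$. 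Fubini plus a diagonal extraction then shows that for $\nu$-a.e.\ $y$ there is $N_k(y)\to\infty$ along which $F_{N_k(y)}\to 0$ $\mu_y\times\mu_y$-a.s., which unpacks to
\begin{align*}
\limsup_{N\to\infty}\tfrac{1}{N}\sum_{i=1}^N\bar{d}(T^{a_i}x_1,T^{a_i}x_2)\geq\limsup_{k\to\infty}\tfrac{1}{N_k(y)}\sum_{i=1}^{N_k(y)}g(S^{a_i}y)
\end{align*}
for $\nu$- and $(\mu_y\times\mu_y)$-a.e.\ $(x_1,x_2)$.

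To close I must choose $\delta_y>0$ with the right-hand side exceeding $2\delta_y$ $\nu$-a.e. Reverse Fatou on $(Y,\nu,S)$ immediately gives
\begin{align*}
\int_Y\limsup_{N\to\infty}\tfrac{1}{N}\sum_{i=1}^N g(S^{a_i}y)\, d\nu(y)\geq\int_Y g\, d\nu>0,
\end{align*}
so the bound holds on a set of positive $\nu$-measure. The main obstacle I expect is upgrading this to a $\nu$-full measure set, because Cesàro averages along an arbitrary sequence need not converge pointwise and the natural candidate functions are not $S$-invariant. I would address it by observing that the left-hand $\limsup$ in the inequality above, viewed as a function on $X\times X$, is measurable with respect to the $(T\times T)$-tail $\sigma$-algebra, since Cesàro $\limsup$ is insensitive to any finite number of initial terms and $a_i\to\infty$; by the relative $K$ property it therefore coincides $\lambda$-a.s.\ with $H\circ\pi_1\circ\mathrm{Proj}_1$ for some $H\in L^\infty(Y,\nu)$ with $H\geq 0$ and $\int H\, d\nu>0$. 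Using the $S$-invariance of $\sup_{k\geq 0}H\circ S^k$ together with ergodicity of $(Y,\nu,S)$ and the identity that $H\circ S^k$ represents the analogous $H$ for the shifted sequence $\{a_i+k\}$, I would rule out $\{H=0\}$ having positive $\nu$-measure; setting $\delta_y:=H(y)/3$ and translating $\bar{d}$ back to $d$ then finishes the proof.
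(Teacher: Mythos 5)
Your overall frame---passing to the product $(X\times X,\lambda)$ with $\lambda=\mu\times_Y\mu$, using Lemma \ref{key-lem} to make the fibrewise expected distance positive, and exploiting the relative Pinsker structure of Proposition \ref{22-03-06-03}---is the same as the paper's, but the engine you substitute for the paper's key input breaks down exactly on the generality of $\ba$ that the theorem asserts. The second-moment identity $\int F_N^2\,d\lambda=\frac{1}{N^2}\sum_{i,j=1}^N\int\tilde f\cdot\tilde f\circ(T\times T)^{a_j-a_i}\,d\lambda$ tends to $0$ only if, for each fixed $M$, the proportion of pairs $(i,j)\in\{1,\dots,N\}^2$ with $|a_i-a_j|\le M$ tends to $0$. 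That holds for strictly increasing integer sequences (at most $(2M+1)N$ such pairs), but the hypothesis is only $\lim_i a_i=+\infty$: values may repeat with unbounded multiplicity (for instance the value $10^k$ occurring $4^k$ consecutive times), and then a positive proportion of pairs satisfies $a_i=a_j$ for every large $N$, so $\int F_N^2\,d\lambda$ does not tend to $0$ along any subsequence of times. Your proposed fix---replacing $\ba$ by a strictly increasing subsequence---is not available: the conclusion of Theorem \ref{22-03-06-05} concerns the Ces\`aro averages along $\ba$ itself, and a lower bound for the $\limsup$ of averages along a subsequence of $\ba$ gives no lower bound along $\ba$, since the selected indices may have density zero among $\{1,\dots,N\}$. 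The paper avoids this entirely: it extracts a weak-$*$ limit $P(f)$ of the average functions $\mathbf{A}(\ba_N,f)$ (a subsequence of the times $N$, not of $\ba$), and uses the fact that the relative Pinsker algebra is characteristic along an arbitrary infinite sequence (Lemma \ref{lem1}, Proposition \ref{21-03-06-02}) together with Proposition \ref{22-03-06-03}, Lemma \ref{21-03-06-01} and Lemma \ref{22-03-26-02} to conclude that the pointwise $\limsup$ dominates the fibrewise constant $\int P(f)\,d(\mu_y\times\mu_y)>0$.

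Two further steps are unsupported. First, the covariance decay $\int\tilde f\cdot\tilde f\circ(T\times T)^n\,d\lambda\to0$ is not a stated consequence of Proposition \ref{22-03-06-03}: it amounts to ``relative c.p.e.\ implies relative mixing'' for the extension $(X\times X,\lambda)\to(Y,\nu)$ (which also needs the idempotence of the relative Pinsker construction, since the proposition computes the Pinsker algebra relative to $Z$, not to $Y$); this is a nontrivial theorem you would have to prove or cite in the present non-compact, purely measurable setting, and even if granted it does not repair the counting problem above. Second, the upgrade from a positive-measure set to a $\nu$-full set is the weakest point: insensitivity of the $\limsup$ to finitely many initial terms of $\ba$ gives measurability with respect to a ``tail along $\ba$'', which is not the dynamical tail or Pinsker algebra, so relative K does not hand you a $Y$-measurable version $H$; moreover your inequality only controls $\limsup_k\frac{1}{N_k(y)}\sum_{i\le N_k(y)}g(S^{a_i}y)$ along a $y$-dependent subsequence, and there is no pointwise ergodic theorem along an arbitrary sequence that would bound these averages below a.e., nor is the identification of $H\circ S^k$ with the $H$ of the shifted sequence justified. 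The paper obtains full measure much more prosaically: Claim \ref{22-05-13-01} is proved for a compact $X_1\subset X$ of large measure, Lemma \ref{factor} gives $\nu(Y_1)\ge\lambda(X_1\times X_1)$, and an exhaustion of $X$ by compacta $X_n$ with $\mu(X_n)\ge 1-1/n$ together with $Y'=\bigcap_i\bigcup_{n\ge i}Y_n$ yields the $\nu$-full set. In short, to make your route work you would need an $L^2$ statement valid for arbitrary sequences with repetitions, which is precisely what Proposition \ref{21-03-06-02} supplies in the paper.
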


To prove Theorem \ref{22-03-06-05}, we recall the notion of the characteristic $\sigma$-algebra firstly, and then prove that relative Pinsker algebra is the characteristic $\sigma$-algebra along any infinite positive integer sequence  for MDSs, namely Proposition \ref{21-03-06-02}. 
\begin{defn}
		Let  $(X,\mathcal{X},\mu,T)$ be a MDS. A sub-$\sigma$-algebra $\mathcal{S}$ of $\mathcal{X}$ is a characteristic $\sigma$-algebra  for the positive integer sequence $\ba=\{a_n\}_{n\in\mathbb{N}}$ if for any $f\in\mathcal{L}^2(X,\mathcal{X},\mu)$,
		\begin{align}
		\lim_{N\to+\infty}\|\textbf{A}(\ba_N, f)-\mathbf{A}(\ba_N,  \mathbb{E}(f|\mathcal{S}))\|_{2}=0
		\end{align}
		where $\textbf{A}(\ba_N, f)=\frac{1}{N}\sum_{i=1}^{N}f\circ T^{a_i}$ and $\|f\|_2:=\big(\int_X|f|^2d\mu\big)^{1/2}$.
	\end{defn}

In \cite{liu2022pinsker}, authors proved that Pinsker algebra is the characteristic $\sigma$-algebra along any infinite sequence in countable discrete amenable groups in compact metric spaces. By the proof, we can see that the result also holds for the non-compact spaces. We restate a convenient version of this theorem as follows.
	\begin{lem}
	\label{lem1}
	Let  $(X,\mathcal{X},\mu,T)$  be an invertible MDS on standard probability space. If $\ba=\{a_n\}_{n\in\mathbb{N}}$ is an infinite positive integer sequence,  then the Pinsker algebra of $(X,\mathcal{X},\mu,T)$ is a characteristic $\sigma$-algebra for $\ba$.
\end{lem}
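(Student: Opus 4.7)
The plan is to verify the defining condition for a characteristic $\sigma$-algebra by reducing to the orthogonal complement of $\mathcal{L}^2(X,\mathcal{P}_\mu(T),\mu)$. Writing $f=\mathbb{E}(f\mid\mathcal{P}_\mu(T))+g$, where $g\perp\mathcal{L}^2(X,\mathcal{P}_\mu(T),\mu)$, linearity of $\mathbf{A}(\ba_N,\cdot)$ gives
$$\mathbf{A}(\ba_N,f)-\mathbf{A}(\ba_N,\mathbb{E}(f\mid\mathcal{P}_\mu(T)))=\mathbf{A}(\ba_N,g),$$
so the statement reduces to showing that $\|\mathbf{A}(\ba_N,g)\|_2\to 0$ as $N\to\infty$ for every $g\in\mathcal{L}^2(X,\mathcal{X},\mu)$ satisfying $\mathbb{E}(g\mid\mathcal{P}_\mu(T))=0$.

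The key tool will be the spectral theorem for the Koopman operator $U_T h:=h\circ T$ on $\mathcal{L}^2(X,\mathcal{X},\mu)$: to each $h$ there corresponds a finite positive spectral measure $\sigma_h$ on $[0,1)$ with $\langle U_T^n h, h\rangle=\int_0^1 e^{2\pi i n\theta}\,d\sigma_h(\theta)$, whence
$$\|\mathbf{A}(\ba_N,g)\|_2^2=\int_0^1\Big|\frac{1}{N}\sum_{i=1}^{N}e^{2\pi i a_i\theta}\Big|^2\,d\sigma_g(\theta).$$
I would then invoke the Rokhlin--Sinai theorem, which produces an exhaustive sub-$\sigma$-algebra $\mathcal{A}$ with $T^{-1}\mathcal{A}\subset\mathcal{A}$, $\bigvee_{n\geq 0}T^{n}\mathcal{A}=\mathcal{X}$ and $\bigcap_{n\geq 0}T^{-n}\mathcal{A}=\mathcal{P}_\mu(T)\pmod{\mu}$. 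This exhibits $U_T$ as having countable Lebesgue spectrum on $\mathcal{L}^2(X,\mathcal{X},\mu)\ominus\mathcal{L}^2(X,\mathcal{P}_\mu(T),\mu)$, and hence the spectral measure $\sigma_g$ of any $g$ annihilated by $\mathbb{E}(\cdot\mid\mathcal{P}_\mu(T))$ is absolutely continuous with respect to Lebesgue measure on $[0,1)$.

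To conclude I would apply Weyl's classical equidistribution theorem: for any sequence $\{a_i\}$ of pairwise distinct integers, $\frac{1}{N}\sum_{i=1}^{N}e^{2\pi i a_i\theta}\to 0$ for Lebesgue-a.e.\ $\theta$. Since $a_i\to+\infty$, each integer value occurs only finitely often in $\ba$, so after absorbing an $O(1/N)$ remainder one may assume distinctness. Absolute continuity of $\sigma_g$ then transfers Lebesgue-a.e.\ convergence to $\sigma_g$-a.e.\ convergence, and the dominated convergence theorem, combined with the uniform bound $\bigl|\frac{1}{N}\sum_{i=1}^{N}e^{2\pi i a_i\theta}\bigr|\leq 1$, yields $\|\mathbf{A}(\ba_N,g)\|_2^2\to 0$.

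The main potential obstacle is the spectral identification of the complement of the Pinsker subspace as a subspace of countable Lebesgue spectrum, i.e.\ the implication $\mathbb{E}(g\mid\mathcal{P}_\mu(T))=0\Rightarrow\sigma_g\ll\mathrm{Leb}$. This is classical Rokhlin--Sinai for any invertible MDS on a standard probability space, so non-compactness of $X$ causes no issue. As an alternative route — which is the one envisioned by the paper — one may inspect the proof in \cite{liu2022pinsker}: its steps are purely Hilbert-space and entropy-theoretic and never use compactness of the phase space, so the amenable-group (in particular $\mathbb{Z}$-action) argument there carries over to the present setting verbatim.
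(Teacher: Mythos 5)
Your route is genuinely different from the paper's. The paper offers no argument for this lemma at all: it simply quotes \cite{liu2022pinsker} and records the observation that the proof there is purely Hilbert-space and entropy-theoretic and never uses compactness of the phase space (your closing remark is exactly the paper's justification). Your spectral proof --- split $f=\mathbb{E}(f|\mathcal{P}_\mu(T))+g$, write $\|\mathbf{A}(\ba_N,g)\|_2^2=\int_0^1\big|\frac1N\sum_{i=1}^N e^{2\pi i a_i\theta}\big|^2\,d\sigma_g(\theta)$, use Rokhlin--Sinai to see that $U_T$ has countable Lebesgue spectrum on $\mathcal{L}^2(X,\mathcal{X},\mu)\ominus\mathcal{L}^2(X,\mathcal{P}_\mu(T),\mu)$, hence $\sigma_g\ll\mathrm{Leb}$, then apply Weyl's a.e.\ equidistribution theorem and dominated convergence --- is self-contained, works exactly in the stated generality (invertible MDS on a standard probability space; non-compactness plays no role), and for pairwise distinct $a_i$ it is complete modulo these classical inputs. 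What it buys is an actual proof rather than a citation; a small streamlining is available: absolute continuity of $\sigma_g$ already gives $\langle U_T^ng,g\rangle\to0$ by Riemann--Lebesgue, and expanding $\|\mathbf{A}(\ba_N,g)\|_2^2=\frac1{N^2}\sum_{i,j}\langle U_T^{a_i-a_j}g,g\rangle$ together with the fact that for distinct terms each $i$ has at most $2M+1$ indices $j$ with $|a_i-a_j|\le M$ avoids Weyl's theorem altogether.

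The one genuine flaw is your treatment of repeated terms. From $a_i\to+\infty$ you only get that each value occurs finitely often; the multiplicities may be unbounded, and then deleting repetitions is not an $O(1/N)$ perturbation of $\mathbf{A}(\ba_N,\cdot)$. Worse, for such sequences the statement itself fails: in a Bernoulli system (trivial Pinsker algebra) take $g$ with $\int g\,d\mu=0$ and $\|g\|_2=1$, choose $n_1<n_2<\cdots$ so sparse that $|\langle U_T^{n_j-n_l}g,g\rangle|$ is as small as you like for $j\neq l$, and let $\ba$ consist of $n_1$ repeated $2$ times, $n_2$ repeated $4$ times, \dots, $n_k$ repeated $2^k$ times, \dots; then $\lim_i a_i=+\infty$, yet at the end of the $k$-th block, with $N_k=2^{k+1}-2$, one gets $\|\mathbf{A}(\ba_{N_k},g)\|_2^2\ge \frac{1}{N_k^2}\sum_{j\le k}4^j-o(1)$, which stays bounded away from $0$, so $\mathcal{P}_\mu(T)$ is not characteristic for this $\ba$ even though $\mathbb{E}(g|\mathcal{P}_\mu(T))=0$. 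Hence the lemma is only correct under the reading --- standard in this literature and clearly the one intended here and in \cite{liu2022pinsker} --- that the terms of $\ba$ are pairwise distinct (for instance strictly increasing). Under that reading your sentence about absorbing repetitions should simply be deleted, and the rest of your argument stands.
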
	
The corresponding result of Lemma \ref{lem1} also holds for relative Pinsker $\sigma$-algebra. Specifically,
	\begin{prop}
		\label{21-03-06-02}
		Let  $\pi:(X,\mathcal{X},\mu,T)\rightarrow (Z,\mathcal{Z},\eta, R)$  be an invertible factor map between two MDSs on standard probability space. If $\ba=\{a_n\}_{n\in\mathbb{N}}$ is an infinite positive integer sequence,  then $\mathcal{P}_{\mu}(\pi^{-1}\mathcal{Z})$ is a characteristic $\sigma$-algebra for $\ba$.
	\end{prop}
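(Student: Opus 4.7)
The plan is to reduce Proposition \ref{21-03-06-02} directly to Lemma \ref{lem1} by means of an elementary inclusion between the absolute and the relative Pinsker $\sigma$-algebras. Since conditional entropy is monotone in the conditioning $\sigma$-algebra, i.e.\ $H_\mu(\beta|\mathcal{S})\leq H_\mu(\beta)$ for any finite measurable partition $\beta$, one immediately obtains $h_\mu(T,\{A,A^c\}|\pi^{-1}\mathcal{Z})\leq h_\mu(T,\{A,A^c\})$ for every $A\in\mathcal{X}$. Consequently, any set of zero absolute entropy has zero relative entropy, giving the key inclusion
\[
\mathcal{P}_\mu(T) \subseteq \mathcal{P}_\mu(\pi^{-1}\mathcal{Z}) \pmod{\mu}.
\]

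Given $f\in\mathcal{L}^2(X,\mathcal{X},\mu)$, I would decompose $f=f_1+g$ with $f_1=\mathbb{E}(f|\mathcal{P}_\mu(\pi^{-1}\mathcal{Z}))$ and $g=f-f_1$. The tower property for conditional expectations, combined with the inclusion above, then yields
\[
\mathbb{E}(g|\mathcal{P}_\mu(T))
= \mathbb{E}(f|\mathcal{P}_\mu(T)) - \mathbb{E}\!\big(\mathbb{E}(f|\mathcal{P}_\mu(\pi^{-1}\mathcal{Z}))\,\big|\,\mathcal{P}_\mu(T)\big)
= \mathbb{E}(f|\mathcal{P}_\mu(T)) - \mathbb{E}(f|\mathcal{P}_\mu(T)) = 0.
\]
Applying Lemma \ref{lem1} to $g$ in the invertible MDS $(X,\mathcal{X},\mu,T)$ therefore gives
\[
\lim_{N\to+\infty}\|\mathbf{A}(\ba_N, g)\|_2
= \lim_{N\to+\infty}\|\mathbf{A}(\ba_N, g) - \mathbf{A}(\ba_N, \mathbb{E}(g|\mathcal{P}_\mu(T)))\|_2 = 0.
\]

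Finally, by linearity of $h\mapsto\mathbf{A}(\ba_N,h)$, one has $\mathbf{A}(\ba_N,f)-\mathbf{A}(\ba_N,f_1) = \mathbf{A}(\ba_N,g)$, whose $L^2$-norm tends to zero by the previous display. This is exactly the statement that $\mathcal{P}_\mu(\pi^{-1}\mathcal{Z})$ is a characteristic $\sigma$-algebra for $\ba$. There is no serious obstacle in this plan: the whole argument is a single Hilbert-space orthogonality reduction, and the only new content beyond Lemma \ref{lem1} is the elementary monotonicity of conditional entropy used to establish the $\sigma$-algebra inclusion. Lemma \ref{lem1} carries no ergodicity hypothesis, so it applies here verbatim, and all the conditional expectations involved are automatically in $\mathcal{L}^2$ because $f\in\mathcal{L}^2(X,\mathcal{X},\mu)$.
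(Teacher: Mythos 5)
Your proof is correct and follows essentially the same route as the paper: both rest on the inclusion $\mathcal{P}_\mu(T)\subseteq\mathcal{P}_\mu(\pi^{-1}\mathcal{Z})$ and reduce the statement to Lemma \ref{lem1}. The only cosmetic difference is that you apply Lemma \ref{lem1} to the residual $g=f-\mathbb{E}\big(f|\mathcal{P}_\mu(\pi^{-1}\mathcal{Z})\big)$, whose conditional expectation on $\mathcal{P}_\mu(T)$ vanishes by the tower property, whereas the paper applies it to $f$ itself and reaches the same conclusion through an orthogonality (Pythagoras) estimate comparing the two averaged approximations.
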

	\begin{proof}
	Let $\mathcal{P}_{\mu}(T)$ be the Pinsker $\sigma$-algebra of $(X,\mathcal{X},\mu,T)$. Then 
		\[\mathcal{P}_{\mu}(T)\subset\mathcal{P}_{\mu}(\pi^{-1}\mathcal{Z}),\]
	which implies that for any $f,g\in\mathcal{L}^2(X,\mathcal{X},\mu)$,
$$\left\langle f-\mathbb{E}\big(f|\mathcal{P}_{\mu}(\pi^{-1}\mathcal{Z})\big), \mathbb{E}\big(g|\mathcal{P}_{\mu}(\pi^{-1}\mathcal{Z})\big)-\mathbb{E}\big(g|\mathcal{P}_{\mu}(T)\big)\right\rangle=0.$$
Hence, for any $N\in\mathbb{N}$, 
\begin{align*}
&\Big(\|\textbf{A}(\ba_N, f)-\textbf{A}\big(\ba_N,  \mathbb{E}(f|\mathcal{P}_{\mu}(T))\big)\|_{2}\Big)^2
\\=&\Big(\|\textbf{A}(\ba_N, f)-\textbf{A}\big(\ba_N,  \mathbb{E}(f|\mathcal{P}_{\mu}(\pi^{-1}\mathcal{Z}))\big)\|_{2}\Big)^2
\\\quad&+\Big(\|\textbf{A}(\ba_N,  \mathbb{E}(f|\mathcal{P}_{\mu}(\pi^{-1}\mathcal{Z})))-\textbf{A}\big(\ba_N, \mathbb{E}(f|\mathcal{P}_{\mu}(T))\big)\|_{2}\Big)^2.
\end{align*}
That is, \[\|\textbf{A}(\ba_N, f)-\textbf{A}\big(\ba_N,  \mathbb{E}(f|\mathcal{P}_{\mu}(\pi^{-1}\mathcal{Z}))\big)\|_{2}\le
\|\textbf{A}(\ba_N, f)-\textbf{A}\big(\ba_N, \mathbb{E}(f|\mathcal{P}_{\mu}(T))\big)\|_{2}.\]
By Lemma \ref{lem1}, one has that\[\lim_{N\to+\infty}\|\textbf{A}(\ba_N, f)-\textbf{A}\big(\ba_N,  \mathbb{E}(f|\mathcal{P}_{\mu}(\pi^{-1}\mathcal{Z}))\big)\|_{2}=0.\]
We finish the proof of Proposition \ref{21-03-06-02}.
	\end{proof}

To prove Theorem \ref{22-03-06-05}, we recall the notion of the weak convergence and some results about it.
Let $(X,\mathcal{X},\mu)$ be a probability space. A sequence $\{h_n\}_{n\in\mathbb{N}}$ in $\mathcal{L}^{2}(X,\mathcal{X},\mu)$ converges weakly to $h\in\mathcal{L}^{2}(X,\mathcal{X},\mu)$ (denoted by $h_n\overset{w}\to h$),
if $\lim_{n\to+\infty}\int  h_nfd\mu=\int hf d\mu$ for all $f \in\mathcal{L}^{2}(X,\mathcal{X},\mu)$.  The following results  are proved in \cite[Lemma 4.1 and Lemma 4.2]{liu2022pinsker}.  
\begin{lem}
\label{21-03-06-01}
Let  $(X,\mathcal{X},\mu,T)$  be a MDS and $\ba=\{a_n\}_{n\in\mathbb{N}}$ be  an infinite  positive integer sequence.  Given  a $T$-invariant sub-$\sigma$-algebra $\mathcal{S}$ of $\mathcal{X}$ and  $f\in\mathcal{L}^2(X,\mathcal{X},\mu)$, assume that there exists $ P(f)\in\mathcal{L}^2(X,\mathcal{X},\mu)$ such that $\textbf{A}(\ba_n, f)\overset{w}\rightarrow P(f)$. Then, 
\begin{enumerate}[(i)]
\item\label{21-03-06-01-01} if $\lim_{n\to+\infty}\| \textbf{A}(\ba_n, f)- \textbf{A}(\ba_n, \mathbb{E}(f|\mathcal{S}))\|_2=0,$  one has that
\begin{align*}
\textbf{A}(\ba_n, \mathbb{E}(f|\mathcal{S}))\overset{w}\rightarrow P(f),
\end{align*}
 as $n\to\infty$. Moreover, $ P(f)\in\mathcal{L}^2(X,\mathcal{S},\mu)$.
\item\label{21-03-06-01-02} if $f$ is real-valued, then $\limsup_{n\to+\infty}\textbf{A}(\ba_n, f)(x)\geq P(f)(x)$ for $\mu$-a.s. $x\in X$.
\end{enumerate}
\end{lem}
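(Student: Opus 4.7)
The plan is to treat the two parts separately, with part (i) being a direct consequence of the hypothesis and part (ii) requiring a Banach--Saks plus Ces\`aro--Stolz argument in the Hilbert space $\mathcal{L}^2(X,\mathcal{X},\mu)$.

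For part (i), I would first observe that the norm-convergence hypothesis propagates to weak convergence: for any test function $g\in\mathcal{L}^2(X,\mathcal{X},\mu)$, Cauchy--Schwarz gives
\[
\Big|\int\big(\textbf{A}(\ba_n, \mathbb{E}(f|\mathcal{S}))-\textbf{A}(\ba_n, f)\big)\,g\,d\mu\Big|\le \|\textbf{A}(\ba_n, \mathbb{E}(f|\mathcal{S}))-\textbf{A}(\ba_n, f)\|_{2}\|g\|_{2}\to 0,
\]
so combining with $\textbf{A}(\ba_n,f)\overset{w}\to P(f)$ yields $\textbf{A}(\ba_n,\mathbb{E}(f|\mathcal{S}))\overset{w}\to P(f)$. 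To see $P(f)\in\mathcal{L}^2(X,\mathcal{S},\mu)$, note that since $\mathcal{S}$ is $T$-invariant, $\mathbb{E}(f|\mathcal{S})\circ T^{a_i}$ is $\mathcal{S}$-measurable for every $i$; hence each $\textbf{A}(\ba_n,\mathbb{E}(f|\mathcal{S}))$ lies in the closed linear subspace $\mathcal{L}^2(X,\mathcal{S},\mu)\subset\mathcal{L}^2(X,\mathcal{X},\mu)$, which is therefore weakly closed, forcing its weak limit $P(f)$ into it as well.

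For part (ii), I first note that the sequence $\{\textbf{A}(\ba_n, f)\}_{n\in\mathbb{N}}$ is uniformly bounded in $\mathcal{L}^2$, since the triangle inequality and the measure-preservation of $T$ give $\|\textbf{A}(\ba_n,f)\|_2\le \|f\|_2$. Together with the weak convergence $\textbf{A}(\ba_n,f)\overset{w}\to P(f)$, the Banach--Saks theorem (applicable in the Hilbert space $\mathcal{L}^2$) produces a subsequence $\{n_k\}_{k\in\mathbb{N}}$ such that the Ces\`aro averages
\[
g_K:=\frac{1}{K}\sum_{k=1}^{K}\textbf{A}(\ba_{n_k}, f)
\]
converge to $P(f)$ in $\mathcal{L}^2$-norm. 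Passing to a further subsequence (still labeled by $k$), I may additionally arrange $g_K(x)\to P(f)(x)$ for $\mu$-a.s.\ $x\in X$. The conclusion then follows from Ces\`aro--Stolz applied pointwise: for such $x$, since $f$ is real-valued,
\[
P(f)(x)=\lim_{K\to+\infty}\frac{1}{K}\sum_{k=1}^{K}\textbf{A}(\ba_{n_k}, f)(x)\le \limsup_{k\to+\infty}\textbf{A}(\ba_{n_k}, f)(x)\le \limsup_{n\to+\infty}\textbf{A}(\ba_n, f)(x).
\]

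The main obstacle is essentially a bookkeeping one: one must carefully keep track of three different modes of convergence (weak in $\mathcal{L}^2$, strong in $\mathcal{L}^2$, and a.s.\ pointwise) and extract subsequences in the right order. The serious analytic input is only the Banach--Saks theorem, after which everything reduces to routine facts about Ces\`aro averages of real sequences; no delicate estimates on the dynamics themselves are needed, since $T$-invariance of $\mathcal{S}$ and $\mathcal{L}^2$-boundedness of the averages are all that is used.
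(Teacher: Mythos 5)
Your proof is correct. Note that the paper does not actually prove this lemma itself --- it is quoted verbatim from \cite[Lemmas 4.1 and 4.2]{liu2022pinsker} --- but your argument (Cauchy--Schwarz plus weak closedness of the subspace $\mathcal{L}^2(X,\mathcal{S},\mu)$ for part (i); $\mathcal{L}^2$-boundedness, Banach--Saks, an a.e.\ convergent sub-subsequence of the Ces\`aro means, and the elementary inequality $\limsup_K \frac{1}{K}\sum_{k\le K}c_k\le\limsup_k c_k$ for part (ii)) is exactly the standard route taken in that reference and in the earlier literature on mean Li--Yorke chaos, so there is nothing to flag beyond the minor relabeling of the sub-subsequence index.
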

\begin{lem}\label{22-03-26-02}
Let  $(X,\mathcal{X},\mu,T)$  be a MDS and $\ba=\{a_n\}_{n\in\mathbb{N}}$ be  an infinite positive integer sequence.  Given   $g\in\mathcal{L}^2(X,\mathcal{X},\mu)$ with $g(x)>0$ for $\mu$-a.s. $x\in X$ and $A\in\mathcal{X}$ with $\mu(A)>0$,  assume that  $f=g1_A$ where $1_A$ is the character function of $A$,  and  $\textbf{A}(\ba_n, f)\overset{w}\rightarrow  P(f)\in\mathcal{L}^2(X,\mathcal{X},\mu)$. 
Then $P(f)(x)>0$ for $\mu$-a.s. $x\in A$. 
\end{lem}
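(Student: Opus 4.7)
The strategy is a contradiction argument that exploits the nonnegativity of $f = g\cdot 1_A$ and the characterization of weak convergence through test functions in $L^2$.

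First I verify that $P(f)\geq 0$ $\mu$-a.e. Since $f\geq 0$, every Cesàro average $\mathbf{A}(\ba_n,f)$ is nonnegative. By Mazur's lemma, some sequence of finite convex combinations of $\{\mathbf{A}(\ba_n,f)\}_n$ converges to $P(f)$ in $L^2$-norm, and passing to a $\mu$-a.e.\ convergent subsequence preserves nonnegativity, giving $P(f)\geq 0$ $\mu$-a.e.

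Next, suppose for contradiction that $E:=\{x\in A:P(f)(x)=0\}$ has $\mu(E)>0$. Testing the weak convergence $\mathbf{A}(\ba_n,f)\overset{w}\to P(f)$ against the test function $1_E\in\mathcal{L}^2(X,\mathcal{X},\mu)$ yields
\begin{align*}
0=\int_E P(f)\,d\mu=\lim_{n\to\infty}\frac{1}{n}\sum_{i=1}^n\int_{E\cap T^{-a_i}A}g\circ T^{a_i}\,d\mu.
\end{align*}
Since $g>0$ $\mu$-a.e., the sets $A_k:=A\cap\{g>1/k\}$ increase (up to a $\mu$-null set) to $A$, so I choose $k_0$ with $\mu(E\cap A_{k_0})>0$ and set $F:=E\cap A_{k_0}$. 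The pointwise bound $g\circ T^{a_i}\geq \tfrac{1}{k_0}$ on $T^{-a_i}A_{k_0}$ combined with the inclusion $F\subseteq E$ forces
\begin{align*}
\frac{1}{n}\sum_{i=1}^n\mu(F\cap T^{-a_i}A_{k_0})\leq k_0\cdot \frac{1}{n}\sum_{i=1}^n\int_F (g\,1_A)\circ T^{a_i}\,d\mu \xrightarrow[n\to\infty]{} 0.
\end{align*}

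The last stage is to extract a contradiction from this vanishing. The plan here is to apply the weak convergence a second time, now with a test function constructed from $F$ itself (for instance $1_F$, or $P(f)/(P(f)+\varepsilon)$ localized near $F$), and to combine it with the mass identity $\int P(f)\,d\mu=\lim\int \mathbf{A}(\ba_n,f)\,d\mu=\int_A g\,d\mu>0$ inherited from $T$-invariance. The goal is to force a positive lower bound on the averaged returns $\tfrac{1}{n}\sum_i\mu(F\cap T^{-a_i}A_{k_0})$, contradicting the vanishing above. Pointwise information from part (ii) of \fullref{21-03-06-01}, which gives $\limsup_n \mathbf{A}(\ba_n,f)\geq P(f)$ $\mu$-a.e., should be coupled with Fatou's lemma on $F$ to transform the vanishing $L^2$-information into pointwise control over orbit returns.

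I expect the main obstacle to be precisely this final step: weak convergence of a single averaged sequence alone does not directly supply the recurrence of $F$ along $\ba$ that one needs, and the argument must leverage the strict positivity of $g$ on $A$ together with the $L^2$-mass preservation in a nontrivial way in order to prevent $P(f)$ from placing all its mass outside $A$.
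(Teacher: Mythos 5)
Your argument is careful and correct as far as it goes, but it stops exactly where the lemma's content begins: the whole difficulty is the final step you label a ``plan,'' namely showing that $\frac{1}{n}\sum_{i=1}^{n}\mu\big(F\cap T^{-a_i}A_{k_0}\big)$ cannot tend to $0$. This is not a technicality that a second application of weak convergence will supply, and your own diagnosis is exactly right: weak convergence of $\mathbf{A}(\ba_n,f)$ gives no recurrence of $F$ into $A$ along $\ba$, and in the stated generality no such recurrence holds. Take $X=\{0,1\}$ with $\mu$ uniform, $T$ the swap, $A=\{0\}$, $g\equiv 1$, $a_i=2i-1$; then $f\circ T^{a_i}=1_{\{1\}}$ for every $i$, so $\mathbf{A}(\ba_n,f)=1_{\{1\}}$ for all $n$ and $P(f)=1_{\{1\}}$ vanishes identically on $A$, while your averaged return quantity is identically $0$ with no contradiction. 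So the missing step cannot be filled as the lemma is stated (no hypothesis ties the sequence $\ba$ to the set $A$); the proposal is genuinely incomplete.

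For comparison, the paper's own proof is a one-shot direct estimate rather than your two-stage reduction: it sets $B=\{x\in A:P(f)(x)\leq 0\}$, picks $\epsilon>0$ with $\mu(C_\epsilon)<\mu(B)$ where $C_\epsilon=\{x\in A:f(x)\leq\epsilon\}$, and asserts $\int_X(f\circ T^n)1_B\,d\mu\geq\int_{T^{-n}B\setminus C_\epsilon}f\,d\mu\geq\epsilon\big(\mu(B)-\mu(C_\epsilon)\big)>0$, whence $\int_BP(f)\,d\mu>0$, contradicting $P(f)\leq 0$ on $B$. The second inequality uses $f>\epsilon$ off $C_\epsilon$, which is false outside $A$ where $f=g1_A$ vanishes; it thus tacitly assumes that $T^{-n}B$ returns to $A$ up to the small set $C_\epsilon$ --- precisely the recurrence you identified as the obstruction (and which fails in the example above). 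So your attempt and the paper's proof founder on the same point, and yours at least makes the missing ingredient explicit. What \emph{is} true without further hypotheses is the weaker statement that $P(f)>0$ on a set of positive measure, since $\int_XP(f)\,d\mu=\lim_n\int_X\mathbf{A}(\ba_n,f)\,d\mu=\int_Ag\,d\mu>0$ by $T$-invariance of $\mu$; any honest repair of the lemma (and of its use in Theorem \ref{22-03-06-05}) has to route through something of this form or add a hypothesis guaranteeing the recurrence of $A$ along $\ba$.
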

\begin{proof}
Assuming that above lemma doesn't hold,  then  $\mu(B)>0$ where $B:=\{x\in A : P(f)(x)\leq 0\}$.  Since $f(x)>0$ for $\mu$-a.s. $x\in A$,  there exists $\epsilon>0$ such that $\mu(C_{\epsilon})<\mu(B)$, where $C_{\epsilon}=\{x\in A: f(x)\leq\epsilon\}$. 
For any $n\in\mathbb{N}$, 
\begin{align*}
\int_X(f\circ T^{n}) 1_Bd\mu=\int_Xf 1_{T^{-n}B}d\mu\geq \int_{ T^{-n}B\setminus C_{\epsilon}}fd\mu\geq\epsilon\big(\mu(B)-\mu(C_{\epsilon})\big),
\end{align*}
which implies that 
\begin{align*}
\int_XP(f) 1_Bd\mu=\lim_{n\to+\infty}\int_X \textbf{A}(\ba_n, f)1_Bd\mu\geq\epsilon\big(\mu(B)-\mu(C_{\epsilon})\big)>0
\end{align*}
which contradicts the fact that $\int_XP(f)1_Bd\mu\leq0$.
\end{proof}
\begin{lem}\label{factor}
Assume that $\pi:(X,\mathcal{X},\mu,T)\rightarrow (Y,\mathcal{Y},\nu,S)$  is a factor map between two 
MDSs on standard probability spaces and $A\in\mathcal{X}$ with $\mu(A)>0$.  Denoting $\mu=\int_Y \mu_yd\nu(y)$ as the disintegration of $\mu$ relative to the factor $(Y,\mathcal{Y},\nu,S)$, then there is a subset $B\in\mathcal{Y}$ with $\nu(B)\geq \mu(A)$  such that any $y\in B$, $\mu_y(\pi^{-1}(y)\cap A)>0$.
\end{lem}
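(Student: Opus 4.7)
The plan is to take $B$ to be exactly the set of fibers on which $A$ carries positive conditional mass, and then read off the inequality $\nu(B) \geq \mu(A)$ directly from the disintegration formula.

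Concretely, I would first define
\[
B := \{y \in Y : \mu_y(\pi^{-1}(y) \cap A) > 0\}.
\]
Since $\mu_y(\pi^{-1}(y)) = 1$ for $\nu$-a.s. $y$, we can equivalently write (up to a $\nu$-null set) $B = \{y \in Y : \mu_y(A) > 0\}$. Measurability of $B$ comes from the second defining property of the disintegration recalled in \Cref{22-07-06-01}: for the indicator $f = 1_A \in \mathcal{L}^1(X,\mathcal{X},\mu)$, the map $y \mapsto \int_X 1_A \, d\mu_y = \mu_y(A)$ belongs to $\mathcal{L}^1(Y,\mathcal{Y},\nu)$, so $B = \{y : \mu_y(A) > 0\} \in \mathcal{Y}$.

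Next I would estimate $\nu(B)$ from below using the disintegration identity applied to $1_A$:
\[
\mu(A) = \int_Y \mu_y(A) \, d\nu(y) = \int_B \mu_y(A) \, d\nu(y),
\]
where the last equality uses that $\mu_y(A) = 0$ for $y \notin B$. Since $\mu_y$ is a probability measure we have $\mu_y(A) \leq 1$, hence
\[
\mu(A) \leq \int_B 1 \, d\nu(y) = \nu(B),
\]
which is the desired inequality. Finally, by construction every $y \in B$ satisfies $\mu_y(\pi^{-1}(y) \cap A) > 0$, so $B$ has all the required properties.

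There is no real obstacle here; the only points to be mildly careful about are (i) invoking the correct version of the disintegration so that $y \mapsto \mu_y(A)$ is genuinely measurable (which is why the standard probability space hypothesis matters), and (ii) tacitly discarding the $\nu$-null set of $y$ on which $\mu_y(\pi^{-1}(y)) \neq 1$ when rewriting $\mu_y(\pi^{-1}(y) \cap A)$ as $\mu_y(A)$. Both are standard.
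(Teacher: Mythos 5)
Your proposal is correct and is essentially the paper's own argument: take $B=\{y:\mu_y(A)>0\}$ (measurable since $y\mapsto\mu_y(A)$ is measurable by the disintegration), and bound $\nu(B)\geq\int_B\mu_y(A)\,d\nu(y)=\int_Y\mu_y(A)\,d\nu(y)=\mu(A)$. The only cosmetic difference is that you start from $\{y:\mu_y(\pi^{-1}(y)\cap A)>0\}$ and identify it with this set up to a $\nu$-null set, a point the paper glosses over in the same way.
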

\begin{proof}
Let $B=\{y\in Y: \mu_y(A)>0\}$. Then $B\in\mathcal{Y}$  and 
$$\nu(B)=\int_B 1d\nu(y)\geq\int_B \mu_y(A)d\nu(y)=\int_Y \mu_y(A)d\nu(y)=\mu(A).$$
By the definition of measure disintegration,  it is clear that for any $y\in B$, $\mu_y(\pi^{-1}(y)\cap A)>0$. 
\end{proof}

After all preparations, we begin to  prove Theorem \ref{22-03-06-05}.
\begin{proof}[Proof of Theorem \ref{22-03-06-05}]
Recall that  $X$ is  a Borel subset of a Polish space $\tilde{X}$, $\pi:(X,\mathcal{X},\mu,T)\rightarrow (Z,\mathcal{Z},\eta, R)$  is a factor map between two 
MDSs on standard probability spaces,  $\pi_1: (X,\mathcal{X},\mu,T)\rightarrow (Y,\mathcal{Y},\nu, S)$ is the relative Pinsker factor map with respect to $\pi^{-1}\mathcal{Z}$ and  $d$ is a compatible complete metric on $\tilde{X}$. Let $\mu=\int_Y\mu_yd\nu(y)$ be the disintegration of $\mu$ relative to $(Y,\mathcal{Y},\nu, S)$.  
\begin{claim}
\label{22-05-13-01}
Given a compact subset $X_1$ of $X$ with $\mu(X_1)>0$ and an infinite positive integer sequence $\ba=\{a_i\}_{i\in\mathbb{N}}$,  there exists a measurable set $Y_1$ with $\nu(Y_1)\geq\mu\times_{Y}\mu(X_1\times X_1)$ such that for any $y\in Y_1$, there exists $\delta_y(1)>0$ such that
	\begin{align*}
	\mu_y\times\mu_y\big(\mathcal{W}_{\ba}(X_1,\delta_y(1))\big)=\mu_y\times \mu_y(X_1\times X_1)>0,
	\end{align*}
	where 
		\begin{align*}
		\mathcal{W}_{\ba}(X_1,\delta_y(1))=\Big\{(x_1, x_2)\in X_1\times X_1: \limsup_{n\to+\infty}\frac{1}{n}\sum_{i=1}^nf_1(T^{a_i}x_1, T^{a_i}x_2)>\delta_y(1)\Big\},
		\end{align*}
		and  $f_1=1_{X_1\times X_1}d$.
\end{claim}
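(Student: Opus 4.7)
The plan is to lift the problem to the fibred product $(X\times X,\mathcal{X}\times\mathcal{X},\lambda,T\times T)$ with $\lambda:=\mu\times_Y\mu$, and combine the relative--Pinsker identification of \Cref{22-03-06-03} with the characteristic-$\sigma$-algebra property of \Cref{21-03-06-02}. Since $\pi\circ\mathrm{Proj}_1$ is a factor map from this MDS onto $(Z,\mathcal{Z},\eta,R)$, \Cref{22-03-06-03} yields
\begin{align*}
\mathcal{S}:=\mathcal{P}_{\lambda}\big((\pi\circ\mathrm{Proj}_1)^{-1}\mathcal{Z}\big)=(\pi_1\circ\mathrm{Proj}_1)^{-1}\mathcal{Y}\pmod{\lambda},
\end{align*}
and \Cref{21-03-06-02} then guarantees that $\mathcal{S}$ is a characteristic $\sigma$-algebra for $\ba$ on the fibred product.

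Because $X_1$ is compact, $f_1=1_{X_1\times X_1}d$ is bounded and so belongs to $\mathcal{L}^2(\lambda)$. The characteristic property delivers $\|\mathbf{A}(\ba_N,f_1)-\mathbf{A}(\ba_N,\mathbb{E}(f_1|\mathcal{S}))\|_2\to 0$. The averages are bounded in $\mathcal{L}^2(\lambda)$, so by Banach--Alaoglu there is a subsequence $\{N_k\}$ with $\mathbf{A}(\ba_{N_k},f_1)\to P(f_1)$ weakly. \Cref{21-03-06-01}(i) then forces $P(f_1)$ to be $\mathcal{S}$-measurable, so we may write $P(f_1)(x_1,x_2)=\hat P(\pi_1(x_1))$ for a measurable $\hat P:Y\to\mathbb{R}$.

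Since $h_\mu(T|\pi)>0$, \Cref{key-lem} ensures $\mu_y$ is non-atomic for $\nu$-a.s.\ $y$; hence the diagonal of $X\times X$ is $\lambda$-null and $f_1>0$ $\lambda$-a.s.\ on $X_1\times X_1$. Writing $f_1=\tilde g\cdot 1_{X_1\times X_1}$ with $\tilde g:=f_1+1_{(X\times X)\setminus(X_1\times X_1)}\in\mathcal{L}^2(\lambda)$ strictly positive $\lambda$-a.s., \Cref{22-03-26-02} forces $P(f_1)>0$ $\lambda$-a.s.\ on $X_1\times X_1$. Disintegrating $\lambda=\int_Y(\mu_y\times\mu_y)\,d\nu(y)$ shows $\hat P(y)>0$ for $\nu$-a.s.\ $y$ with $\mu_y(X_1)>0$. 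Set
\begin{align*}
Y_1:=\{y\in Y:\mu_y(X_1)>0,\ \hat P(y)>0,\ \mu_y\text{ is non-atomic}\}.
\end{align*}
The last two conditions remove only $\nu$-null sets, and $\int_Y\mu_y(X_1)^2\,d\nu(y)\leq\nu(\{y:\mu_y(X_1)>0\})$ since $\mu_y(X_1)^2\leq\mu_y(X_1)\leq 1$, giving $\nu(Y_1)\geq\mu\times_Y\mu(X_1\times X_1)$. Take $\delta_y(1):=\hat P(y)/2>0$ for $y\in Y_1$. By \Cref{21-03-06-01}(ii),
\begin{align*}
\limsup_{N\to\infty}\mathbf{A}(\ba_N,f_1)(x_1,x_2)\geq\limsup_{k\to\infty}\mathbf{A}(\ba_{N_k},f_1)(x_1,x_2)\geq\hat P(y)>\delta_y(1)
\end{align*}
for $\lambda$-a.s.\ $(x_1,x_2)\in X_1\times X_1$ with $\pi_1(x_1)=y\in Y_1$, and disintegration transfers this to $\mu_y\times\mu_y(\mathcal{W}_\ba(X_1,\delta_y(1)))=\mu_y\times\mu_y(X_1\times X_1)=\mu_y(X_1)^2>0$.

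The main obstacle is orchestrating the various tools so the fibred product genuinely falls within the scope of \Cref{22-03-06-03} and \Cref{21-03-06-02}, and then extracting from the subsequential weak limit $P(f_1)$ both $\mathcal{S}$-measurability (to descend it to $Y$) and strict positivity (via non-atomicity from \Cref{key-lem}, fed into \Cref{22-03-26-02}). A minor subtlety is that $\mathbf{A}(\ba_N,f_1)$ need not converge weakly along the full sequence, but the one-sided bound supplied by \Cref{21-03-06-01}(ii) on the subsequence already gives the lower bound on the full $\limsup_N$ that the claim requires.
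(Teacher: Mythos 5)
Your argument is correct and takes essentially the same route as the paper's proof: pass to the fibred product $(X\times X,\mathcal{X}\times\mathcal{X},\mu\times_Y\mu,T\times T)$, combine \Cref{21-03-06-02} and \Cref{22-03-06-03} with a Banach--Alaoglu subsequential weak limit $P(f_1)$, obtain strict positivity from \Cref{key-lem} and \Cref{22-03-26-02}, and conclude via the one-sided bound of \Cref{21-03-06-01}. The only departures are cosmetic: you descend $P(f_1)$ to a function $\hat P$ on $Y$ and verify $\nu(Y_1)\geq\mu\times_Y\mu(X_1\times X_1)$ by a direct Chebyshev-type estimate, where the paper invokes \Cref{factor} on a full-measure subset $B$ of $X_1\times X_1$.
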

\begin{proof}
 Let $\lambda:=\mu\times_{Y}\mu$ (see  Section \ref{22-07-06-01}) and $\Delta_{X}=\{(x,x): x\in X \}$.  By Lemma \ref{key-lem} and Fubini's theorem,  we have
\begin{align*}
\lambda(\Delta_{X})=\int_Y\int_{X}\mu_y(x)d\mu_y(x)d\nu(y)=0,
\end{align*}
which implies that $d(x_1,x_2)>0$ for $\lambda$-a.s. $(x_1, x_2)\in X\times X$  and hence  $f_1(x_1,x_2)>0$ for any $\lambda$-a.s. $(x_1,x_2)\in X_1\times X_1$. 

 Denoting $\textbf{A}(\textbf{a}_N, f_1):=\frac{1}{N}\sum_{i=1}^Nf_1\circ(T\times T)^{a_i}$ for any $N\in\mathbb{N}$, it is clear that $\{\textbf{A}(\textbf{a}_N, f_1)\}_{N\in\mathbb{N}}$ is a sequence of bounded functions in $\mathcal{L}^2(X\times X, \mathcal{X}\times\mathcal{X},\lambda)$. By using Alaoglu's theorem,  there exists a monotone increasing positive integer sequence $\{ a_{k_N}\}_{N\in\mathbb{N}}$ and $P(f_1)\in \mathcal{L}^2(X\times X, \mathcal{X}\times\mathcal{X},\lambda)$ such that 
\begin{align}\label{8}
\textbf{A}(\mathbf{a}_{k_N}, f_1)\overset{w}\rightarrow P(f_1)\text{ as } N\to\infty.
\end{align}
Applying  Proposition \ref{21-03-06-02} to the  $\pi\circ\mathrm{Proj}_1: (X\times X,\mathcal{X}\times\mathcal{X},\lambda, T\times T)\to (Z,\mathcal{Z},\eta, R)$ where $\mathrm{Proj}_1: X\times X\to X$ is the projection to the first coordinate, we have 
$$\lim_{N\to+\infty}\|\textbf{A}(\ba_N, f_1)-\textbf{A}\big(\ba_N, \mathbb{E}(f_1|\mathcal{P}_{\lambda}((\pi\circ\mathrm{Proj}_1)^{-1}\mathcal{Z}))\big)\|_2=0.$$  Due to \eqref{8} and  Lemma \ref{21-03-06-01}, one has 
\begin{align*}
\textbf{A}\big(\mathbf{a}_{k_N}, \mathbb{E}(f_1|\mathcal{P}_{\lambda}((\pi_1\circ\mathrm{Proj}_1)^{-1}\mathcal{Z}))\big)\overset{w}\rightarrow P(f_1)\in \mathcal{L}^2(X\times X, \mathcal{P}_{\lambda}((\pi_1\circ\mathrm{Proj}_1)^{-1}\mathcal{Z}),\lambda),
\end{align*} 
as $N\to\infty$.  By using Proposition \ref{22-03-06-03},  one has that 
 \begin{align*}
 P(f_1)(x_1,x_2)&=\mathbb{E}\big(P(f_1)|\mathcal{P}_{\lambda}((\pi_1\circ\mathrm{Proj}_1)^{-1}\mathcal{Z})\big)(x_1, x_2)
 \\&=\mathbb{E}(P(f_1)|(\pi_1\circ\mathrm{Proj}_1)^{-1}\mathcal{Y})(x_1, x_2)=\int_{X\times X}P(f_1)d(\mu_y\times\mu_y)
 \end{align*}
 for $\lambda$-a.s. $(x_1, x_2)\in X\times X$ where $y=\pi_1\circ\mathrm{Proj}_1((x_1, x_2))$. 
Combining Lemma \ref{21-03-06-01}  and Lemma \ref{22-03-26-02},  there exists a measurable subset $B$ of $X_1\times X_1$ with $\lambda(B)=\lambda(X_1\times X_1)$ such that for any $(x_1, x_2)\in B$, we have 
$$\limsup_{N\to+\infty}\textbf{A}(\ba_N, f_1)(x_1,x_2)\geq P(f_1)(x_1, x_2)=\int_{X\times X}P(f_1)d(\mu_y\times\mu_y):=2\delta_y(1)>0,$$  where $y=\pi_1\circ\mathrm{Proj}_1((x_1, x_2))$.  

According to Lemma  \ref{factor}, there exists $Y'_1\in\mathcal{Y}$ with $\nu(Y'_1)\geq\lambda(B)=\lambda(X_1\times X_1)$ such that for any $y\in Y'_1$, 
$$\mu_y\times\mu_y\big((\pi_1\circ\mathrm{Proj}_1)^{-1}(y)\cap B\big)>0.$$
 As $\lambda((X_1\times X_1)\setminus B)=0$,  there exists $\nu$-full measure $Y''_2$, such that for any $y\in Y''_2$ 
 $$\mu_y\times\mu_y((X_1\times X_1)\setminus B)=0.$$  
Let $Y_1=Y'_1\cap Y''_2.$ Then $\nu(Y_1)\geq\lambda(X_1\times X_1)$. Since $(\pi_1\circ\mathrm{Proj}_1)^{-1}(y)\cap B \subset \mathcal{W}_{\ba}(X_1, \delta_y(1))$  for  any $y\in Y_1$,   it follows that for each $y\in Y_1$,
\begin{align*}
\mu_y\times \mu_y(\mathcal{W}_{\ba}(X_1,\delta_y(1)))&\geq\mu_y\times\mu_y\big((\pi_1\circ\mathrm{Proj}_1)^{-1}(y)\cap B\big)
\\&=\mu_y\times\mu_y(B)=\mu_y\times\mu_y(X_1\times X_1).
\end{align*}
This finishes the proof of Claim \ref{22-05-13-01}.
\end{proof}
According to Lemma \ref{PC},  there exists a  sequence of compact subsets $\{X_n\}_{n\in\mathbb{N}}$  of $X$  with $\mu(X_n)\geq 1-1/n$ and $X_n\subset X_{n+1}$ for each $n\in\mathbb{N}$. Applying  Claim \ref{22-05-13-01} on each $X_n$,  we can choose a subsequence  a monotone increasing positive integer sequence $\{ a_{k_N}\}_{N\in\mathbb{N}}$ such that for any $n\in\mathbb{N}$, 
$$\textbf{A}(\mathbf{a}_{k_N}, f_n)\overset{w}\rightarrow P(f_n)\text{ as } N\to\infty.$$
Furthermore, one has  that $\delta_y(n)\geq \delta_y(m)$  if $y\in Y_n\cap Y_m$ and $n\geq m$. Indeed,  we only need to prove that $P(f_n)\geq P(f_m)$ for any $n\geq m$.  For any $g\in \mathcal{L}^2(X,\mathcal{X},\mu)$ and $g\geq 0$, 
\begin{align*}
\langle P(f_n)-P(f_m), g\rangle&=\lim_{N\to+\infty}\langle\textbf{A}(\mathbf{a}_{k_N}, f_n)-\textbf{A}(\mathbf{a}_{k_N}, f_m), g\rangle\geq 0,
\end{align*}
since $f_n\ge f_m$.
Let $Y'=\bigcap_{i\in\mathbb{N}}\bigcup_{n\geq i}Y_n$.  Then, by Claim \ref{22-05-13-01} and the fact above, one has $\nu(Y')=1$. For each $y\in Y'$, assume that $y\in\bigcap _{n\in\mathbb{N}}Y_{l_n}$. Then, one has  
\begin{align*}
\mu_y\times\mu_y(\mathcal{W}_{\ba}(X,\delta_y))&=\liminf_{n\to+\infty}\mu_y\times\mu_y(\mathcal{W}_{\ba}(X_{l_n},\delta_y))
\\&\geq \liminf_{n\to+\infty}\mu_y\times\mu_y\Big(\mathcal{W}_{\ba}\big(X_{l_n},\delta_y(l_n)\big)\Big)
\\&=\liminf_{n\to+\infty}\mu_y\times\mu_y(X_{l_n}\times X_{l_n})=1,
\end{align*}
where $\delta_y:=\min\{\delta_y(n) : y\in Y_n\}$.  Therefore, Theorem \ref{22-03-06-05} holds.
 \end{proof}

\section{Proof of  the main Theorems}\label{22-03-08-02}
In this section, we  prove  Theorem \ref{main1} and Theorem \ref{main2} by  using Theorem \ref{infinity} and Theorem \ref{22-03-06-05}.  Recall a result (see \cite[Theorem 1]{M20220712}) which is crucial   to find  Cantor subsets in a perfect complete metric space, firstly.

\begin{lem} \label{Myc} 
Let $Y$ be a perfect complete metric space and $C$ be a dense
$G_\delta$ subset of $Y\times Y$. Then there exists a dense Mycielski subset $S\subseteq Y$ such that
$S\times S\subseteq C\cup \Delta_Y$, where $\Delta_Y=\{ (y,y):y\in Y\}$. 
\end{lem}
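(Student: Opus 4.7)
The plan is to prove this Mycielski-type lemma via a Cantor scheme construction, combined with a countable exhaustion to achieve density.

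First I would write $C = \bigcap_{n \in \mathbb{N}} U_n$, where each $U_n$ is open and dense in $Y \times Y$; this is possible because $C$ is a dense $G_\delta$. Fix a countable dense subset $\{y_k\}_{k \in \mathbb{N}}$ of $Y$. The strategy is: for each $k$, build a Cantor set $K_k \subseteq Y$ containing (or close to) $y_k$ such that $K_k \times K_k \subseteq C \cup \Delta_Y$; then take $S = \bigcup_k K_k$. Since a countable union of Cantor sets is a Mycielski set by definition, and the construction forces $y_k$ to lie in the closure of $K_k$, the set $S$ will be dense. What remains is to exhibit the single Cantor set with the required product property.

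Next I would carry out the Cantor scheme. Indexing by finite binary words $s \in 2^{<\omega}$, I will pick non-empty open balls $B_s \subseteq Y$ with $\mathrm{diam}(B_s) < 2^{-|s|}$ such that: (i) $\overline{B_{s0}} \cup \overline{B_{s1}} \subseteq B_s$, (ii) $\overline{B_{s0}} \cap \overline{B_{s1}} = \emptyset$, and (iii) for every $n \geq 1$ and every pair of distinct words $s \neq t$ in $2^n$, one has $\overline{B_s} \times \overline{B_t} \subseteq U_n$. Once this is done, for each $\sigma \in 2^\omega$ the intersection $\bigcap_n \overline{B_{\sigma \restriction n}}$ is a single point $x_\sigma$ by completeness and diameter control; the set $K = \{x_\sigma : \sigma \in 2^\omega\}$ is a Cantor set because distinct branches are separated by (ii), and for $\sigma \neq \tau$ the pair $(x_\sigma, x_\tau)$ lies in every $U_n$ by (iii), hence in $C$. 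The diagonal case is absorbed by $\Delta_Y$, giving $K \times K \subseteq C \cup \Delta_Y$.

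The inductive step, which I expect to be the main obstacle, goes as follows. Suppose we have chosen $\{B_s : |s| \leq n\}$ satisfying (i)--(iii) up to level $n$. Inside each $B_s$ with $|s| = n$, use perfection of $Y$ to find two disjoint non-empty open balls $B_{s0}^{(0)}, B_{s1}^{(0)}$ with closures disjoint and contained in $B_s$, of diameter less than $2^{-(n+1)}$. Now I need to shrink these $2^{n+1}$ balls so that every cross-product $\overline{B_{s}} \times \overline{B_{t}}$ for distinct $s, t \in 2^{n+1}$ lies in $U_{n+1}$. Enumerate the finitely many such ordered pairs $(s_j, t_j)$, $j = 1, \dots, N$. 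Process them one at a time: at step $j$, pick $(p, q) \in B_{s_j}^{(j-1)} \times B_{t_j}^{(j-1)} \cap U_{n+1}$ (possible because $U_{n+1}$ is dense and the product is open, and distinct words give disjoint balls so the product meets $U_{n+1}$), then choose small enough closed balls around $p$ and $q$ sitting inside $U_{n+1}$ by openness, and replace $B_{s_j}^{(j-1)}, B_{t_j}^{(j-1)}$ by these smaller balls while leaving the others untouched. Shrinking only preserves earlier inclusions, so after $N$ steps all cross-products sit in $U_{n+1}$ and the diameter bound is kept. This finite sequential refinement is exactly where perfection, openness of $U_{n+1}$, and density of $U_{n+1}$ all enter together, and is the delicate combinatorial bookkeeping of the proof.

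Finally, to get density I would build the Cantor sets in parallel: at the $k$-th outer stage, run the construction above starting from an initial ball $B_\emptyset$ of small radius centered at $y_k$, producing a Cantor set $K_k$ with $K_k \subseteq \overline{B(y_k, 1/k)}$ and $K_k \times K_k \subseteq C \cup \Delta_Y$. The union $S = \bigcup_{k} K_k$ meets every neighborhood of every $y_k$, hence is dense; it is a Mycielski set by construction; and $S \times S \subseteq C \cup \Delta_Y$ holds provided one checks cross-stage pairs, which can be arranged by also requiring at stage $k$ that $K_k \times K_j \subseteq C$ for $j < k$ via the same refinement trick applied to the finitely many already-constructed Cantor sets. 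This completes the proposed proof.
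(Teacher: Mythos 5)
The paper does not prove this lemma at all: it is quoted as Theorem~1 of Mycielski \cite{M20220712}, so there is no internal proof to compare against, and your Cantor-scheme strategy is indeed the standard route to it. However, your write-up has one genuine gap plus a small slip. The slip first: with condition (iii) asking only $\overline{B_s}\times\overline{B_t}\subseteq U_n$ for distinct $s,t\in 2^n$, you cannot conclude that $(x_\sigma,x_\tau)$ ``lies in every $U_n$'': if $\sigma$ and $\tau$ first split at level $m$, then for $n<m$ the points $x_\sigma,x_\tau$ lie in the \emph{same} closed ball of level $n$ and (iii) says nothing. This is repaired by replacing $U_n$ with $U_1\cap\dots\cap U_n$ (finite intersections of dense open sets are dense open), i.e.\ assuming the $U_n$ nested; also note that your choice of a countable dense set $\{y_k\}$ silently uses separability, which is harmless here since the lemma is only applied to the compact sets $E_y$.

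The genuine gap is the cross-stage step in your last paragraph. You propose to build $K_k$ \emph{after} $K_1,\dots,K_{k-1}$ are finished and to force $K_k\times K_j\subseteq C$ for $j<k$ ``via the same refinement trick''. That trick needs both factors to be shrinkable open balls; once $K_j$ is a fixed compact set it breaks down: the set $\{x\in Y:\{x\}\times K_j\subseteq U_n\}$ is open (by compactness) but need not be dense, and can even be disjoint from the region where you must place $K_k$ to get density. Concretely, take $Y=[0,1]$, let the complement of $U_1$ be the nowhere dense set $[1/4,3/4]\times\{0\}$, and suppose the first stage produced $K_1\subseteq[0,1/4)\cup(3/4,1]$ with $0\in K_1$ (perfectly legal, since $K_1\times K_1\subseteq C\cup\Delta_Y$ is not violated). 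Then no ball $B$ near $y_k=1/2$ satisfies $\overline{B}\times K_1\subseteq U_1$, so no Cantor set near $1/2$ with $K_k\times K_1\subseteq C$ exists, and your scheme cannot produce a dense $S$. The theorem is still true, of course; the fix is the standard one: run all the Cantor schemes \emph{simultaneously}, seeding a new initial ball centred near $y_k$ at stage $k$ and, at each stage $n$, refining \emph{all} currently active balls (across all schemes) at once so that every product of two distinct closed balls lies in $U_1\cap\dots\cap U_n$. With that reorganization, together with the nestedness correction above, your argument becomes a complete proof along the lines of Mycielski's original one.
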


\subsection{Proof of Theorem \ref{main1}}\label{22-04-22-03}
In this subsection, we assume that  $(X,\phi)$ is an injective continuous random dynamical system over an invertible ergodic Polish system $(\Omega,\mathcal{F},\mathbb{P},\theta)$. In this case, for any $\phi$-invariant random compact set $K$ and for any $\mu\in\mathcal{M}_{\mathbb{P}}^{K}(\Omega\times X)$,  $(K,\mathcal{K},\mu,\Phi)$ is an invertible MDS on a standard probability space  (for example, see \cite[Theorem 2.8]{G}), where $\mathcal{K}$ is the $\sigma$-algebra generated by the Borel subset of $K$.
\begin{proof}[Proof of Theorem \ref{main1}]
 Assume that
$ h_{top}(\phi,K)>0$. By Proposition \ref{VP} there exists $\mu\in \mathcal{E}_{\mathbb{P}}^K(\Omega\times X)$  such
that $h_\mu(\phi,K)>0$. Since $\Phi$ is injective, 
$(K,\mathcal{K},\mu,\Phi)$ is an  invertible ergodic MDS on a standard probability space and $\pi_\Omega: (K,\mathcal{K},\mu,\Phi)\rightarrow (\Omega, \mathcal{F},\mathbb{P},\theta)$ is a factor map between two MDSs on standard probability spaces.  We divide the remainder of the proof into two steps.

\medskip \noindent{\bf{Step 1.}}  In this step, we obtain a $\nu$-full subset of $Y$, which  has some good properties to help us complete proof of Theorem \ref{main1}.

Let $\mathcal{P}_{\mu}(\pi_{\Omega}^{-1}\mathcal{F})$ be the relative Pinsker $\sigma$-algebra of
$\big(K, \mathcal{K},\mu,\Phi\big)$ with respect to $\pi_{\Omega}^{-1}\mathcal{F}$.  By \cite[Theorem 6.5 and Lemma 5.2]{EW}, there exists an invertible  Polish system  $(Y,\mathcal{Y},\nu,S)$ and  two factor maps
$$\pi_1: \big(K, \mathcal{K},\mu,\Phi\big)\rightarrow (Y,\mathcal{Y},\nu,S),\
\pi_2:(Y,\mathcal{Y},\nu,S)\rightarrow  (\Omega,\mathcal{F},\mathbb{P},\theta),$$ between invertible MDSs on standard probability space such that  $\pi_2\circ \pi_1=\pi_{\Omega}$  and
$\pi_1^{-1}(\mathcal{Y})=\mathcal{P}_{\mu}(\pi_{\Omega}^{-1}\mathcal{F})\pmod\mu$.   That is, $\pi_1: \big(K, \mathcal{K},\mu,\Phi\big)\to (Y,\mathcal{Y},\nu,S)$ is  the relative  Pinsker factor map
with respect   to $\pi_{\Omega}^{-1}\mathcal{F}$.

 Denote $ d_{\Omega}$ as the  complete metric on $\Omega$.  Define  the metric on $\Omega\times X$  as
 $$\rho(\boldsymbol{k}_1, \boldsymbol{k}_2)=\max\{d_{\Omega}(\omega_1,\omega_2), d(x_1, x_2)\}$$
   where $\boldsymbol{k}_i=(\omega_i, x_i)\in \Omega\times X$ for $i=1,2$. Then $(\Omega\times X, \rho)$ is a complete separable  metric space.

Given $\delta>0$ and $\boldsymbol{k}_0\in K$, put
\begin{equation*}\label{wm}
\mathcal{W}_{\ba}(K,\delta)=\{(\boldsymbol{k}_1, \boldsymbol{k}_2)\in K\times K: \limsup_{N\rightarrow +\infty}
 \frac{1}{N}\sum_{i=1}^{N} \rho\lk((\Phi\times\Phi)^{a_{i}}(\boldsymbol{k}_1, \boldsymbol{k}_2)\re)>\delta\},
\end{equation*} 
 and 
$$W_{\ba}^s(\boldsymbol{k}_0,\Phi)=\{\boldsymbol{k}\in K: \lim_{n\to+\infty}\rho(\Phi^{a_n}\boldsymbol{k},\Phi^{a_n}\boldsymbol{k}_0)=0\}.$$

 Let  $\mu=\int_Y \mu_y d \nu(y)$ be the disintegration relative to the  factor
$(Y,\mathcal{Y},\nu,S)$.      Claim that there is a $\nu$-full measure  set  $\hat{Y}$   such that for  $y\in \hat{Y}$ 
with following properties:
\begin{enumerate}[(P1)]
\item\label{22-07-07-01} Denoting $\omega=\pi_2(y)$,  then $\phi(n,\omega):X\rightarrow X $ is  continuous and $K(\theta^n\omega)$ is a compact subset of $X$  for each  $n\in \mathbb{N}$.
 \item\label{22-04-22-01-04} $\mu_y(\pi_2(y)\times K_y)=1$ and $\mu_y$ is non-atomic where $K_y:=\{x\in X: (\pi_2(y), x)\in K\}$.
 \item\label{22-04-22-01-05} There exists $\delta_y>0$ such that $\mu_y\times \mu_y(\mathcal{W}_{\ba}(K,\delta_y))=1.$
 \item\label{22-07-07-02}    For any $\boldsymbol{k}\in\pi_1^{-1}(y)$, one has that  $\overline{W^{s}_{\ba}(\boldsymbol{k}, \Phi)\cap \supp(\mu_{y})}=\supp(\mu_y).$
\end{enumerate}
(\hyperref[22-07-07-01]{P1}) is the basic assumption.   (\hyperref[22-04-22-01-04]{P2}) is due to $\mu(K)=1$ and Lemma \ref{key-lem}. Applying Theorem \ref{infinity} and  Theorem \ref{22-03-06-05} to the factor map  $\pi_{\Omega}: \big(K, \mathcal{K},\mu,\Phi\big)\to (\Omega,  \mathcal{F},\mathbb{P},\theta)$, respectively,   (\hyperref[22-04-22-01-05]{P3}) and (\hyperref[22-07-07-02]{P4}) hold.

\medskip \noindent{\bf{Step 2.}} 
In this step, we finish the proof of Theorem \ref{main1} by  showing the following lemma.
\begin{lem}
\label{22-03-06-08}
For any $y\in \hat{Y}$, there exists a Mycielski subset $S_y$ of  $K_y$ and  $\epsilon_0>0$ such that for any  two distinct points $x_1, x_2\in S_y$ satisfies  \eqref{22-03-09-01} and \eqref{22-03-09-02}. 
 \end{lem}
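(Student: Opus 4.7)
The plan is to apply the Mycielski lemma (Lemma \ref{Myc}) to the perfect complete metric space $M_y:=\supp(\mu_y)\subset K_y$, using as our ``good'' $G_\delta$ set the intersection of a mean-separation set and a mean-asymptotic set. Fix $y\in\hat Y$, set $\omega=\pi_2(y)$, view $\mu_y$ as a non-atomic Borel probability on the compact set $K_y$ via (\hyperref[22-07-07-01]{P1})--(\hyperref[22-04-22-01-04]{P2}), and note that since $\mu_y$ is non-atomic the closed set $M_y$ is perfect (and, being closed in the compact $K_y$, complete). Take $\epsilon_0:=\delta_y$ from (\hyperref[22-04-22-01-05]{P3}) and define
\[
\mathcal{A}:=\Bigl\{(x_1,x_2)\in K_y\times K_y:\limsup_{N\to+\infty}\tfrac{1}{N}\sum_{i=1}^{N}d(\phi(a_i,\omega)x_1,\phi(a_i,\omega)x_2)>\epsilon_0\Bigr\},
\]
\[
\mathcal{B}:=\Bigl\{(x_1,x_2)\in K_y\times K_y:\liminf_{N\to+\infty}\tfrac{1}{N}\sum_{i=1}^{N}d(\phi(a_i,\omega)x_1,\phi(a_i,\omega)x_2)=0\Bigr\}.
\]
Because each partial average $(x_1,x_2)\mapsto\frac{1}{N}\sum_{i=1}^N d(\phi(a_i,\omega)x_1,\phi(a_i,\omega)x_2)$ is continuous (by (\hyperref[22-07-07-01]{P1})), the standard identities
$\{\limsup>\epsilon_0\}=\bigcap_n\bigcup_{N\geq n}\{\text{avg}_N>\epsilon_0\}$ and $\{\liminf=0\}=\bigcap_{k}\bigcap_n\bigcup_{N\geq n}\{\text{avg}_N<1/k\}$ show that both $\mathcal{A}$ and $\mathcal{B}$ are $G_\delta$ subsets of $K_y\times K_y$.

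Next I will verify that each of $\mathcal A$ and $\mathcal B$ is dense in $M_y\times M_y$. For $\mathcal A$: since pairs inside $\{\omega\}\times K_y$ satisfy $\rho=d$, property (\hyperref[22-04-22-01-05]{P3}) reads $\mu_y\times\mu_y(\mathcal A)=1$; any non-empty open subset of $M_y\times M_y$ has positive $\mu_y\times\mu_y$-measure (by definition of support), so it must meet $\mathcal A$. For $\mathcal B$: given a basic open set $U\times V$ with $U,V$ open in $M_y$, pick any $x_1\in U$ and apply (\hyperref[22-07-07-02]{P4}) to the point $\boldsymbol k=(\omega,x_1)\in\pi_1^{-1}(y)$; this yields a point $\boldsymbol k'=(\omega,x_2)\in W_{\ba}^s(\boldsymbol k,\Phi)\cap\supp(\mu_y)$ with $x_2\in V$. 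Then $d(\phi(a_n,\omega)x_1,\phi(a_n,\omega)x_2)\to 0$ along $\ba$, and Cesàro averaging a null sequence gives $\liminf=0$, so $(x_1,x_2)\in\mathcal B\cap(U\times V)$.

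With density of both sets established, the Baire category theorem in the complete metric space $M_y\times M_y$ shows that $C:=\mathcal A\cap\mathcal B$ is a dense $G_\delta$ of $M_y\times M_y$. Applying Lemma \ref{Myc} to $Y:=M_y$ (perfect complete) and this $C$ produces a dense Mycielski subset $S_y\subseteq M_y\subseteq K_y$ with $S_y\times S_y\subseteq C\cup\Delta_{M_y}$, so any two distinct $x_1,x_2\in S_y$ lie in $\mathcal A\cap\mathcal B$, which is exactly \eqref{22-03-09-01} and \eqref{22-03-09-02} with $\epsilon(\omega)=\epsilon_0=\delta_y$.

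The only delicate point I anticipate is the density of $\mathcal B$ in the \emph{product} $M_y\times M_y$: we only know from (\hyperref[22-07-07-02]{P4}) that stable sets are dense in $M_y$, not a priori that asymptotic pairs are dense in $M_y\times M_y$. The resolution is exactly the one-point-at-a-time argument above --- freeze the first coordinate and use (\hyperref[22-07-07-02]{P4}) to move the second one --- which works because the stable-set density holds for \emph{every} $\boldsymbol k\in\pi_1^{-1}(y)$, not just almost every. Once that observation is made, the rest is a routine Baire-plus-Mycielski packaging.
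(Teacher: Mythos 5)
Your strategy is the same as the paper's: take the support of $\mu_y$ (the paper's $E_y$) as a perfect compact set, show that the mean-proximal pairs and the mean-$\delta_y$-separated pairs are each dense $G_\delta$ subsets of the square of this set (using (P3) for separation and (P4) for proximality), intersect by Baire, and apply Lemma \ref{Myc}. Two steps, however, are not correct as written. The more substantive one is in your density argument for $\mathcal B$: you pick an \emph{arbitrary} $x_1\in U\subset M_y$ and assert $(\omega,x_1)\in\pi_1^{-1}(y)$. This is unjustified: $\pi_1$ is only a measurable factor map, the fiber $\pi_1^{-1}(y)$ is not closed, and $\supp(\mu_y)$ need not be contained in it; all that is known is $\mu_y(\pi_1^{-1}(y))=1$, which makes the fiber dense in the support but not equal to it. The repair is easy and is the point you should have made explicit: since $U$ meets the support, $\mu_y(\{\omega\}\times U)>0$, so you can choose $x_1\in U$ with $(\omega,x_1)\in\pi_1^{-1}(y)$, and for density you only need one pair in each open box $U\times V$. (The paper instead takes a single $\boldsymbol{k}\in\pi_1^{-1}(y)$, notes $W_{\ba}^s(\boldsymbol{k},\Phi)\times W_{\ba}^s(\boldsymbol{k},\Phi)$ consists of asymptotic pairs, and uses that the closure of $\big(W_{\ba}^s(\boldsymbol{k},\Phi)\cap\supp(\mu_y)\big)\times\big(W_{\ba}^s(\boldsymbol{k},\Phi)\cap\supp(\mu_y)\big)$ is all of $\supp(\mu_y)\times\supp(\mu_y)$.)

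The second slip is the identity $\{\limsup>\epsilon_0\}=\bigcap_n\bigcup_{N\geq n}\{\mathrm{avg}_N>\epsilon_0\}$, which is false: the right-hand side is the set where $\mathrm{avg}_N>\epsilon_0$ for infinitely many $N$, which only yields $\limsup\geq\epsilon_0$, and the strict-limsup set $\mathcal A$ is a countable union of $G_\delta$ sets, not obviously $G_\delta$ itself. If you run the Baire--Mycielski argument with the displayed $G_\delta$ set, distinct points of $S_y$ only satisfy $\limsup\geq\delta_y$, so your concluding claim that \eqref{22-03-09-02} holds with $\epsilon(\omega)=\delta_y$ is not warranted. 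Either define the $G_\delta$ set as the paper does, namely $\bigcap_k\bigcap_l\bigcup_{N\geq l}\{\mathrm{avg}_N>\delta_y-\tfrac1k\}$, or simply output $\epsilon_0:=\delta_y/2$ at the end (as the paper does); the lemma only requires some positive constant. With these two repairs your proof coincides with the paper's.
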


\begin{proof}
Given $y\in \hat Y$, let  
\begin{align*} 
E_y:=\{ x\in  K_y: \mu_y(
\pi_2(y) \times U)>0 \text{ for any open neighborhood } U \text{ of } x\}.  
\end{align*}

By (\hyperref[22-04-22-01-04]{P2}),  we have that  $E_y$  is a perfect  compact subsets and 
\begin{align}
\label{22-03-06-07}
\pi_2(y)\times E_y=\supp(\mu_y).
\end{align}
   Given  $\epsilon>0$, let $\omega(y):=\pi_2(y)\in \Omega$,  
\begin{align*}
\mathbf{P}(y)=\bigcap_{k=1}^{\infty} \bigcap_{l=1}^{\infty}\bigcup_{N=l}^{\infty}&\left\{(x_1,x_2)\in E_y\times E_y:  \frac{1}{N}\sum_{j=1}^N d\big(\phi(a_{j}, \omega(y))x_1, \phi(a_{j}, \omega(y))x_2\big)<\frac{1}{k}\right\},
\end{align*}
and 
\begin{align*}
\mathbf{D}_{\epsilon}(y)=\bigcap_{k=1}^{\infty}\bigcap_{l=1}^{\infty}\bigcup_{N=l}^{\infty}&\left\{(x_1,x_2)\in E_y\times E_y: \frac{1}{N}\sum_{j=1}^Nd\big(\phi(a_{j}, \omega(y))x_1, \phi(a_{j}, \omega(y))x_2\big)>\epsilon-\frac{1}{k}\right\}.
\end{align*}
We complete the proof of this lemma by the following two claims.
\begin{claim}
Recall that $\delta_y$ is defined as  (\hyperref[22-04-22-01-05]{P3}).  Then $\mathbf{D}_{\delta_y}(y)$ is a dense   subset of $(E_y\times E_y, d\times d)$.
\end{claim}
\begin{proof}
 For any non-empty open subsets $U_1$, $U_2$ of  $X$
with $(U_1\times U_2)\cap (E_y\times E_y)\neq \varnothing$, one has 
\begin{align*} &\mu_y\times
\mu_y\left(\Big(\big(\pi_2(y)\times (U_1\cap E_y)\big)\times \big(\pi_2(y)\times (U_2\cap E_y)\big)\Big)\cap \mathcal{W}_{\ba}(K,\delta_y)\right)
\\\overset{\rm(\hyperref[22-04-22-01-05]{P3})}=&\mu_y\times\mu_y\Big(\big(\pi_2(y)\times (U_1\cap E_y)\big)\times \big(\pi_2(y)\times (U_2\cap E_y)\big)\Big)
\\\overset{\eqref{22-03-06-07}}=&\mu_y\big(\pi_2(y)\times (U_1\cap E_y)\big)\cdot\mu_y\big(\pi_2(y)\times (U_2\cap E_y)\big)>0,
\end{align*}
which implies that  
$$\Big(\big(\pi_2(y)\times (U_1\cap E_y)\big)\times \big(\pi_2(y)\times (U_2\cap E_y)\big)\Big)\cap \mathcal{W}_{\ba}(K,\delta_y)\neq\varnothing.$$
Hence there exist $x_j\in U_j\cap E_y$ such that $(\boldsymbol{k}_1,\boldsymbol{k}_2)\in \mathcal{W}_{\ba}(K,\delta_y)$ where $\boldsymbol{k}_j=(\omega(y),x_j)$ for $j=1,2$. It follows that
\begin{align*}
&\limsup_{N\to+\infty}\frac{1}{N}\sum_{j=1}^N d\big(\phi(a_{j}, \omega(y))x_1, \phi(a_{j}, \omega(y))x_2\big)=\limsup_{N\to+\infty}\frac{1}{N}\sum_{j=1}^{N}\rho(\Phi^{a_j}(\boldsymbol{k}_1), \Phi^{a_j}(\boldsymbol{k}_2))>\delta_y.
\end{align*}
  Therefore, $\mathbf{D}_{\delta_y}(y)$ is dense in $ E_y \times E_y$. 
\end{proof}
\begin{claim}
$\mathbf{P}(y)$ is a   dense subset of $(E_y\times E_y,d\times d)$.
\end{claim}
\begin{proof}
By (\hyperref[22-07-07-02]{P4}), for any $\boldsymbol{k}\in \pi_1^{-1}(y)$ one has that
\begin{equation}\label{34}
\overline{W_{\ba}^s(\boldsymbol{k},\Phi)\cap \supp(\mu_y)}=\supp(\mu_y), 
\end{equation}
 Denoting 
\begin{align*}
\text{Asy}_{\ba}(K, \Phi):=\{(\boldsymbol{k}_1, \boldsymbol{k}_2)\in K\times K: \lim_{n\to\infty}\rho(\Phi^{a_n}(\boldsymbol{k}_1), \Phi^{a_n}(\boldsymbol{k}_2))=0\},
\end{align*} then $W_{\ba}^s(\boldsymbol{k}, \Phi)\times W_{\ba}^s(\boldsymbol{k}, \Phi)\subset \text{Asy}_{\ba}(K, \Phi)$. Hence
\begin{align*}
\text{Asy}_{\ba}(K, \Phi)\cap\big(\supp(\mu_y)\times\supp(\mu_y)\big)\supset &\big(W_{\ba}^s(\boldsymbol{k}, \Phi)\times W_{\ba}^s(\boldsymbol{k},\Phi)\big)\cap \big(\supp(\mu_y)\times  \supp(\mu_y)\big)
\\\supset&\big(W_{\ba}^s(\boldsymbol{k},\Phi)\cap \supp(\mu_y)\big)\times \big(W_{\ba}^s(\boldsymbol{k}, \Phi)\cap \supp(\mu_y)\big).
\end{align*}
This combined with \eqref{34} implies that 
\begin{align}
\label{22-07-07-05}
\overline{\text{Asy}_{\ba}(K,\Phi)\cap \big(\supp(\mu_y)\times\supp(\mu_y)\big)}=\supp(\mu_y)\times\supp(\mu_y).
\end{align}
Denoting $\pi_{X}: \Omega\times  X\to X$  as the projection, it is clear that 
$$\pi_X\times\pi_X\Big(\text{Asy}_{\ba}(K,\Phi)\cap\big((\pi_2(y)\times E_y)\times (\pi_2(y)\times E_y)\big)\Big)\subset \mathbf{P}(y).$$  It follows that 
\begin{align*}
 \overline{\mathbf{P}(y)}&\supset\pi_{X}\times \pi_{X}\Big(\big((\pi_2(y)\times E_y)\times(\pi_2(y)\times E_y)\big)\cap\overline{\text{Asy}_{\ba}(K,\Phi)}\Big)
 \\&\overset{\eqref{22-03-06-07}}=\pi_{X}\times \pi_{X}\Big(\big(\supp(\mu_y)\times\supp(\mu_y)\big)\cap\overline{\text{Asy}_{\ba}(K,\Phi) }\Big)
 \\&\supset\pi_{X}\times \pi_{X}\Big(\overline{\big(\supp(\mu_y)\times\supp(\mu_y)\big)\cap\text{Asy}_{\ba}(K,\Phi) }\Big)
 \\&\overset{\eqref{22-07-07-05}}= \pi_{X}\times \pi_{X}\big(\supp(\mu_y)\times\supp(\mu_y)\big)=E_y\times E_y.
 \end{align*}
  This shows that $\mathbf{P}(y)$ is a dense  subset of $E_{y}\times E_y$. 
 \end{proof}
 Denoting $\mathbf{C}(y)=\mathbf{P}(y) \cap \mathbf{D}_{\delta_y}(y)$, it is clear that   $\mathbf{C}(y)$ is a dense  $G_\delta$ subset of $E_{y}\times E_y$.  According to Lemma \ref{Myc}, there  exists a dense Mycielski subset $S_y\subseteq E_y$
such that 
$$S_y\times S_y\subseteq \mathbf{C}(y)\cup \Delta_{E_y},$$ 
where $\Delta_{E_y}=\{ (x, x): x\in E_y\}$. Obviously, $S_y\subseteq E_y\subseteq K_y$  and if $(x_1, x_2)$ is a pair of distinct points in  $S_y$, then $(x_1, x_2)\in \mathbf{C}(y)$ which implies that  
\begin{align*}
&\liminf_{N\to+\infty}\frac{1}{N}\sum_{i=1}^{N} d\big(\phi(a_i, \omega)x_1, \phi(a_i, \omega)x_2\big)=0,
\\&\limsup_{N\to+\infty}\frac{1}{N}\sum_{i=1}^{N} d\big(\phi(a_i, \omega)x_1, \phi(a_i, \omega)x_2\big)>\epsilon_0:=\delta_y/2.
\end{align*}
This finishes the proof of  Lemma \ref{22-03-06-08}.
\end{proof}

Since $\nu(\hat{Y})=1$, there exists $\hat{\Omega}\in\mathcal{F}$ with $\mathbb{P}(\hat{\Omega})=1$ such that for any $\omega\in\hat{\Omega}$, one has that 
$\pi_2^{-1}(\omega)\cap\hat{Y}\neq\varnothing$.  Taking some $y\in \pi_2^{-1}(\omega)\cap \hat{Y}$,  then $S(\omega):=S_y$ is the desired subsets.
\end{proof}

\subsection{Proof of Theorem \ref{main2}}\label{22-03-08-03}
In this subsection, we use the natural extension of the measure-preserving systems to deal with that $(X,\phi)$ is not injective. The reason why this method is not applicable to a general infinite positive integer sequence $\ba$  is that  the sequence $\ba$ will  arise a deviation for \eqref{22-03-09-02}. Note that the symbols in this subsection are independent of the symbols in Section \ref{22-04-22-03}.

\begin{proof}[Proof of Theorem \ref{main2}]
 Assume that
$ h_{top}(\phi,K)>0$. By Proposition \ref{VP} there exists $\mu\in \mathcal{E}_{\mathbb{P}}^K(\Omega\times X)$  such
that $h_\mu(\phi,K)>0$.  $\pi_\Omega:  (K,\mathcal{K},\mu,\Phi)\rightarrow (\Omega, \mathcal{F},\mathbb{P},\theta)$ is a factor map between two MDS on standard probability spaces. 


 Let
$\Pi_{K}:\big(\bar{K},\bar{\mathcal{K}},\bar{\mu}, \bar{\Phi}\big)\rightarrow \big(K,\mathcal{K},\mu,\Phi\big)$ be
the  natural extension of $\big(K,\mathcal{K},\mu,\Phi\big)$ and  $\Pi_{\Omega}: (\bar{\Omega},
\bar{\mathcal{F}},\bar{\mathbb{P}},\bar{\theta})\rightarrow (\Omega, \mathcal{F},\mathbb{P},\theta)$ be the natural extension of $(\Omega, \mathcal{F},\mathbb{P},\theta)$. Define $\bar{\pi}:\bar{K}\rightarrow \bar{\Omega}$ by
$\bar{\pi}((\omega_{i},x_{i})_{i\in \mathbb{Z}})=(\omega_{i})_{i\in \mathbb{Z}}$ for $(\omega_{i},x_{i})_{i\in
\mathbb{Z}}\in\bar{K}$.   Then 
$$\bar{\pi}:\big(\bar{K},\bar{\mathcal{K}},\bar{\mu}, \bar{\Phi}\big)\rightarrow
(\bar{\Omega},\bar{\mathcal{F}},\bar{\mathbb{P}},\bar{\theta})$$ is a factor map between two MDSs on standard probability space, where $\bar{K}$ and $\overline{\Omega}$ are  Borel subsets of the Polish spaces $(\Omega\times X)^\mathbb{Z}$ and $\Omega^{\mathbb{Z}}$, respectively, and $\pi_{\Omega}\circ \Pi_{K}=\Pi_\Omega \circ \bar{\pi}$. 


We divide the remainder of the proof into three steps.

\medskip \noindent{\bf{Step  1.}} In this step, we're going to introduce some notations for our proof.

 By Lemma \ref{GAR} and  Lemma \ref{na=zero}, we have
\begin{align*}
 h_{\bar{\mu}}(\bar{\Phi}|\bar{\pi})&=h_{\bar{\mu}}(\bar{\Phi}|\bar{\pi})+h_{\bar{\mathbb{P}}}(\bar{\theta}|\Pi_\Omega)
\\ &=h_{\bar{\mu}}(\bar{\Phi}|\Pi_\Omega\circ\bar{\pi})=h_{\bar{\mu}}(\bar{\Phi}|\pi_{\Omega}\circ \Pi_{K})
\\ &=h_{\bar{\mu}}(\bar{\Phi}|\Pi_{K})+h_\mu(\Phi|\pi_{\Omega})=h_\mu(\Phi|\pi_{\Omega})=h_\mu(\phi,K)>0.
\end{align*}
Let $\mathcal{P}_{\bar{\mu}}(\bar{\pi}^{-1}\bar{\mathcal{F}})$ be the relative Pinsker $\sigma$-algebra of
$\big(\bar{K},\bar{\mathcal{K}},\bar{\mu}, \bar{\Phi}\big)$ with respect   to $(\bar{\Omega},\bar{\mathcal{F}},\bar{\mathbb{P}},\bar{\theta})$. Therefore, there exists a MDS  $(Y,\mathcal{Y},\nu,S)$ on standard probability space and  two factor maps
$$\pi_1:\big(\bar{K},\overline{\mathcal{K}},\bar{\mu}, \bar{\Phi}\big)\rightarrow (Y,\mathcal{Y},\nu,S),\
\pi_2:(Y,\mathcal{Y},\nu,S)\rightarrow  (\bar{\Omega},\bar{\mathcal{F}},\bar{\mathbb{P}},\bar{\theta})$$ between two MDSs on standard probability space such that  $\pi_2\circ \pi_1=\bar{\pi}$  and
$\pi_1^{-1}(\mathcal{Y})=\mathcal{P}_{\bar{\mu}}(\bar{\pi}^{-1}\bar{\mathcal{F}})\pmod{\bar{\mu}}$. That is, $\pi_1:\big(\bar{K},\bar{\mathcal{K}},\bar{\mu}, \bar{\Phi}\big)\to(Y,\mathcal{Y},\nu,S)$ 
is the relative Pinsker factor map with respect  to $\bar{\pi}^{-1}\bar{\mathcal{F}}$.

 Define metrics $\rho$, $\rho_1$ and $\rho_2$ on $ X^{\mathbb{Z}}$, $\Omega^{\z}$ and $(\Omega\times X)^{\z}$, respectively as follows:
\begin{align*}
&\rho(\vec{x}_1, \vec{x}_2)=\sum_{i\in  \mathbb{Z}} \frac{1}{2^{|i|}}
\frac{d(x_{i}^1,x_{i}^2)}{1+d(x_{i}^1,x_{i}^2)},\quad \rho_1( \vec{\omega}_1,  \vec{\omega}_2)=\sum_{i\in  \mathbb{Z}} \frac{1}{2^{|i|}}
\frac{d_{\Omega}(\omega_{i}^1,\omega_{i}^2)}{1+d_{\Omega}(\omega_{i}^1,\omega_{i}^2)} 
\\&\rho_2(\vec{\boldsymbol{k}}_1, \vec{\boldsymbol{k}}_2)=\max\{\rho(\vec{x}_1, \vec{x}_2),\rho_1(\vec{\omega}_1,  \vec{\omega}_2)\},
\end{align*}
where  $\vec{x}_j=(x^j_{i})_{i\in \mathbb{Z}}\in X^{\mathbb{\z}}$, $\vec{\omega}_j=(\omega^j_{i})_{i\in  \mathbb{Z}}\in\Omega^{\z}$, $\vec{\boldsymbol{k}}_j=(\omega^j_{i},x^j_{i})_{i\in
\mathbb{Z}}\in (\Omega\times X)^{\z}$ for $j=1,2$,  and $d$, $d_{\Omega}$ are the compatible complete metrics on $X$, $\Omega$, respectively.
For simplicity,  we sometimes identify $(\Omega\times  X)^{\z}$ with
$\Omega^{\z}\times X^{\z}$ by
$$((\omega_{i},x_{i}))_{i\in \mathbb{Z}}\in (\Omega\times
X)^{\z}\simeq ((\omega_{i})_{i\in \mathbb{Z}}, (x_{i})_{i\in \mathbb{Z}})\in \Omega^{\z}\times
X^{\z}.$$ 

\medskip \noindent{\bf{Step 2.}} In this step, we obtain a  similar result as Lemma \ref{22-03-06-08} for the natural extension system.

 Given $\delta>0$ and $\vec{\boldsymbol{k}}_0\in\bar{K}$, put
\begin{equation*}
\mathcal{W}(\bar{K}, \delta)=\{(\vec{\boldsymbol{k}}_1,\vec{\boldsymbol{k}}_2)\in \bar{K}\times \bar{K}: \limsup_{N\rightarrow+\infty}
 \frac{1}{N}\sum_{i=1}^{N} \rho_2((\bar{\Phi}\times\bar{\Phi})^{i}(\vec{\boldsymbol{k}}_1,\vec{\boldsymbol{k}}_2))>\delta\},
\end{equation*} 
and 
$$W^s(\vec{\boldsymbol{k}}_0,\bar{\Phi})=\{\vec{\boldsymbol{k}}\in\bar{K}: \lim_{n\to+\infty}\rho_2(\bar{\Phi}^{n}\vec{\boldsymbol{k}},\bar{\Phi}^{n}\vec{\boldsymbol{k}}_0)=0\}.$$
Let  $\bar{\mu}=\int_Y \bar{\mu}_y d \nu(y)$ be the disintegration of $\bar{\mu}$ relative to the  factor
$(Y,\mathcal{Y},\nu,S)$.   By  Lemma \ref{key-lem}, Theorem \ref{22-03-06-05} and Theorem \ref{infinity}, there is a $\nu$-full measure  set  $\hat{Y}$  with $\nu(\hat{Y})=1$ such that for  $y\in \hat{Y}$ 
with following properties:
\begin{enumerate}[(P1)]
\item Denoting $\vec{\omega}(y)=(\omega_i(y))_{i\in\z}:=\pi_2(y)$,  then $\phi(n,\omega_i):X\rightarrow X $ is  continuous and $K(\omega_i(y))$ is compact subset of $X$  for each  $n\in \mathbb{N}$ and $i\in\z$.
 \item\label{22-04-22-01-04} $\bar{\mu}_y(\pi_2(y)\times\bar{K}_y)=1$ and $\bar{\mu}_y$ is non-atomic where $\bar{K}_y:=\{\vec{x}\in \Pi_{i\in\mathbb{Z}}K(\omega_i(y)): \phi(1, \omega_i)x_i=x_{i+1}\text{ for any } i\in\mathbb{Z}\}$.
 \item There exists $\delta_y>0$ such that  $\mu_y\times \mu_y(\mathcal{W}(\bar{K},\delta_y))=1.$
 \item   For any $\vec{\boldsymbol{k}}\in\bar{\pi}_1^{-1}(y)$, one has that  $\overline{W^{s}(\vec{\boldsymbol{k}},\bar{\Phi})\cap \supp(\bar{\mu}_{y})}=\supp(\bar{\mu}_ y).$

\end{enumerate}
With the similar argument as Lemma \ref{22-03-06-08}, we have following lemma.
\begin{lem}\label{22-04-22-08}
For any $y\in\hat{Y}$, $\bar{K}_{y}$ is a compact subset of
$(X^{\z},\rho)$ and there exists a Mycielski subset $\bar{S}_y$ of  $\bar{K}_y$ and  $\epsilon_0>0$ such that for any distinct two points $\vec{x}_1, \vec{x}_2\in \bar{S}_y$ satisfies:
  \begin{align}\label{22-07-07-07}
&\liminf_{N\to+\infty}\frac{1}{N}\sum_{i=1}^{N} \rho_2\big(\bar{\Phi}^{i}(\vec{\omega}(y), \vec{x}_1), \bar{\Phi}^{i}(\vec{\omega}(y),\vec{x}_2)\big)=0,
\\&\label{22-07-07-08}\limsup_{N\to+\infty}\frac{1}{N}\sum_{i=1}^{N} \rho_2\big(\bar{\Phi}^{i}(\vec{\omega}(y),\vec{x}_1), \bar{\Phi}^{i}(\vec{\omega}(y), \vec{x}_2)\big)>\epsilon_0,
\end{align}
where  $\vec{\omega}(y)=(\omega_i(y))_{i\in\z}:=\pi_2(y)$.
 \end{lem}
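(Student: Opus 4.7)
The plan is to transcribe the proof of \Cref{22-03-06-08} almost verbatim, replacing $(K,\Phi,\rho)$ by the natural extension $(\bar{K},\bar{\Phi},\rho_2)$ and properties (P1)--(P4) of that proof by their counterparts listed just before the present lemma. First I would dispose of the compactness claim: by (P1) each $K(\omega_i(y))$ is a compact subset of $X$, so Tychonoff's theorem gives that $\prod_{i\in\mathbb{Z}}K(\omega_i(y))$ is compact in the product topology, which coincides with the topology induced by $\rho$; the compatibility relations $\phi(1,\omega_i(y))x_i=x_{i+1}$ define a closed subset of this product by continuity of each $\phi(1,\omega_i(y))$ (also from (P1)), so $\bar{K}_y$ is a closed, hence compact, subset of $(X^{\mathbb{Z}},\rho)$, exactly the setting in which \Cref{Myc} operates.

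Next I would introduce the set of full-support points
\[
\bar{E}_y:=\bigl\{\vec{x}\in\bar{K}_y:\bar{\mu}_y(\pi_2(y)\times \bar{V})>0\text{ for every open neighbourhood }\bar{V}\text{ of }\vec{x}\text{ in }X^{\mathbb{Z}}\bigr\},
\]
which, by the non-atomicity asserted in (P2), is a perfect compact subset of $\bar{K}_y$ satisfying $\supp(\bar{\mu}_y)=\pi_2(y)\times\bar{E}_y$. Together with the two $G_{\delta}$ sets
\begin{align*}
\mathbf{P}(y)&=\bigcap_{k\geq 1}\bigcap_{l\geq 1}\bigcup_{N\geq l}\Big\{(\vec{x}_1,\vec{x}_2)\in\bar{E}_y\times\bar{E}_y:\tfrac{1}{N}\sum_{i=1}^{N}\rho_2\bigl(\bar{\Phi}^i(\vec{\omega}(y),\vec{x}_1),\bar{\Phi}^i(\vec{\omega}(y),\vec{x}_2)\bigr)<\tfrac{1}{k}\Big\},\\
\mathbf{D}_{\delta_y}(y)&=\bigcap_{k\geq 1}\bigcap_{l\geq 1}\bigcup_{N\geq l}\Big\{(\vec{x}_1,\vec{x}_2)\in\bar{E}_y\times\bar{E}_y:\tfrac{1}{N}\sum_{i=1}^{N}\rho_2\bigl(\bar{\Phi}^i(\vec{\omega}(y),\vec{x}_1),\bar{\Phi}^i(\vec{\omega}(y),\vec{x}_2)\bigr)>\delta_y-\tfrac{1}{k}\Big\},
\end{align*}
this is the entire apparatus.

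The two density claims are the heart of the matter. For $\mathbf{D}_{\delta_y}(y)$, given non-empty open $\bar{U}_1,\bar{U}_2\subseteq X^{\mathbb{Z}}$ meeting $\bar{E}_y$, (P3) combined with $\supp(\bar\mu_y)=\pi_2(y)\times\bar{E}_y$ gives
\[
(\bar{\mu}_y\times\bar{\mu}_y)\Big(\bigl((\pi_2(y)\times(\bar{U}_1\cap\bar{E}_y))\times(\pi_2(y)\times(\bar{U}_2\cap\bar{E}_y))\bigr)\cap\mathcal{W}(\bar{K},\delta_y)\Big)>0,
\]
so some pair $(\vec{x}_1,\vec{x}_2)\in(\bar{U}_1\cap\bar{E}_y)\times(\bar{U}_2\cap\bar{E}_y)$ realizes a $\rho_2$-Ces\`aro average strictly larger than $\delta_y$. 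For $\mathbf{P}(y)$, set $\mathrm{Asy}(\bar{K},\bar{\Phi}):=\{(\vec{\boldsymbol{k}}_1,\vec{\boldsymbol{k}}_2)\in\bar{K}\times\bar{K}:\lim_{n\to+\infty}\rho_2(\bar{\Phi}^n\vec{\boldsymbol{k}}_1,\bar{\Phi}^n\vec{\boldsymbol{k}}_2)=0\}$ and note $W^s(\vec{\boldsymbol{k}},\bar{\Phi})\times W^s(\vec{\boldsymbol{k}},\bar{\Phi})\subseteq\mathrm{Asy}(\bar{K},\bar{\Phi})$; applying (P4) twice then yields $\overline{\mathrm{Asy}(\bar{K},\bar{\Phi})\cap(\supp(\bar{\mu}_y)\times\supp(\bar{\mu}_y))}=\supp(\bar{\mu}_y)\times\supp(\bar{\mu}_y)$, and projecting to the $X^{\mathbb{Z}}$-coordinates via the homeomorphism $\pi_2(y)\times\bar{E}_y\to\bar{E}_y$ delivers density of $\mathbf{P}(y)$ in $\bar{E}_y\times\bar{E}_y$.

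Finally, $\mathbf{C}(y):=\mathbf{P}(y)\cap\mathbf{D}_{\delta_y}(y)$ is a dense $G_{\delta}$ subset of the perfect complete metric space $(\bar{E}_y\times\bar{E}_y,\rho\times\rho)$, so \Cref{Myc} produces a dense Mycielski subset $\bar{S}_y\subseteq\bar{E}_y$ with $\bar{S}_y\times\bar{S}_y\subseteq\mathbf{C}(y)\cup\Delta_{\bar{E}_y}$; the choice $\epsilon_0:=\delta_y/2$ then gives \eqref{22-07-07-07}--\eqref{22-07-07-08} for every pair of distinct points in $\bar{S}_y$. The only step I expect to require genuine care is the compactness of $\bar{K}_y$ inside $(X^{\mathbb{Z}},\rho)$, because without it \Cref{Myc} is not available; once this point is settled, the remainder is a faithful translation of the argument used for \Cref{22-03-06-08}, the substantive dynamical input being packaged entirely into \Cref{infinity} and \Cref{22-03-06-05}.
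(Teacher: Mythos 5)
Your proposal is correct and matches the paper's intent exactly: the paper proves Lemma \ref{22-04-22-08} simply by invoking ``the similar argument as Lemma \ref{22-03-06-08}'', which is precisely the transcription you carry out, with (P1)--(P4) of Step 2 replacing their fiber counterparts. Your additional Tychonoff-plus-closedness argument for the compactness of $\bar{K}_y$ (needed so that Lemma \ref{Myc} applies) is a detail the paper leaves implicit, and it is handled correctly.
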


\medskip \noindent{\bf Step  3.} In this step, we finish the proof of Theorem \ref{main2}.

Recall that $$\pi_2:(Y,\mathcal{Y},\nu,S)\rightarrow (\overline{\Omega},\bar{\mathcal{F}}, \bar{\mathbb{P}},\bar\theta) \text{ and }\Pi_\Omega:(\overline{\Omega},\bar{\mathcal{F}}, \bar{\mathbb{P}},\bar\theta) \rightarrow(\Omega,\mathcal{F}, \mathbb{P},\theta),$$  where $\pi_2$ is a factor map between two MDSs and $\Pi_\Omega$ is the projection. Since $\nu(\hat{Y})=1$, by Lemma \ref{factor}, there exists  $\hat{\Omega}\in\mathcal{F}$ with $\mathbb{P}(\hat{\Omega})=1$ and $(\Pi_\Omega\circ
\pi_2)^{-1}(\omega)\cap \hat{Y}\neq \varnothing$ for  each $\omega\in \hat{\Omega}$.

From now,  fix $\omega\in \hat{\Omega}$ and  $y\in \hat{Y}$ such that $\Pi_\Omega\circ \pi_2(y)=\omega$. Then we have  $\omega=\omega_{0}(y)$. By Lemma \ref{22-04-22-08},  there exist $\epsilon_0>0$ and a Mycielski subset $\bar{S}_y$ of  $\bar{K}_y$ such that for each pair $(\vec{x}_1, \vec{x}_2)$ of distinct points in  $\bar{S}_y$, satisfies  \eqref{22-07-07-07} and \eqref{22-07-07-08}.  Let $\eta:\bar{K}_y\rightarrow K_{\omega}$ be the natural projection of coordinate with $\eta(\vec{x})=x_{0}$ for $\vec{x}=(x_{i})_{i\in \mathbb{Z}}\in  X^{\z}$. Put $S_\omega=\eta(\bar{S}_y)$. Then
$S_\omega\subseteq K_{\omega}$. 

In the following we show  that $S_\omega$ is a Mycielski chaotic set for $(\omega,\phi)$. Firstly we claim that map $\eta: \bar{S}_y\rightarrow S_\omega$ is injective. If this is not true,
then there  exist two distinct points $\vec{x}_1,\vec{x}_2$ in $\bar{S}_y$ such
that $\eta(\vec{x}_1)=\eta(\vec{x}_2)$,  i.e.  $x_{0}^1=x_{0}^2$. Since $\vec{x}_1, \vec{x}_2\in \bar{K}_y$, we  have
\begin{align}\label{eq-n}
x_{i}^1=\phi(i, {\omega_{0}(y)})x_0^1=\phi(i, {\omega_{0}(y)})x_0^2=x_{i}^2 \end{align}
 for each $i\in \mathbb{N}_0$.  Thus
\begin{equation*}
\begin{split}
&\lim_{n\rightarrow+\infty} \rho((\phi(n,{\omega_{i}(y)})x_{i}^1)_{i\in \mathbb{Z}},
(\phi(n, {\omega_{i}(y)})x_{i}^2)_{i\in \mathbb{Z}})=\lim_{n\rightarrow+\infty} \rho((x_{i+n}^1)_{i\in\mathbb{Z}},(x_{i+n}^2)_{i\in\mathbb{Z}})\overset{\eqref{eq-n}}=0
\end{split}
\end{equation*}  
which contradicts \eqref{22-07-07-07}. Hence $\eta: \bar{S}_y\rightarrow S_\omega$ is injective. Since $\bar{S}_y$ is a Mycielski set, $\bar{S}_y=\bigcup_{j\in \mathbb{N}}C_j$  where each $C_j$ is a Cantor set. Since
$\eta:(C_j,\rho)\rightarrow (\eta(C_j),d)$ is a one to one surjective continuous map and $C_j$ is a compact
subset of $(X^{\z},\rho)$, it follows that $\eta:C_j\rightarrow \eta(C_j)$ is a homeomorphism. Thus $\eta(C_j)$ is a
Cantor set. Hence $S_\omega=\bigcup_{j\in \mathbb{N}} \eta(C_j)$ is a Mycielski set of $K_\omega$.

Given a pair $(x_1, x_2)$ of distinct points in  $S_{\omega}$,    one has that 
\begin{align*}
&\liminf_{N\to+\infty}\frac{1}{N}\sum_{j=1}^{N}d(\phi(j,\omega)x_1, \phi(j,\omega)x_2)
\\\leq&(1+d(x_1, x_2))\liminf_{N\to+\infty}\frac{1}{N}\sum_{j=1}^{N}\rho((\phi(j,\omega_{i}(y))x_{i}^1)_{i\in\mathbb{Z}}, (\phi(j,\omega_{i}(y))x_{i}^2)_{i\in\mathbb{Z}})=0,
\end{align*}
where $(x_i^k)_{i\in\mathbb{Z}}=\eta^{-1}(x_k)$ for $k=1,2$. This implies that \eqref{22-03-09-01} holds. On the other hand, recall that $\epsilon_0$ is the constant in Lemma \ref{22-04-22-08}.  Take $L\in\mathbb{N}$ such that  $\frac{1}{2^{L}}<\epsilon_0/6$. By 
$$\limsup_{N\to+\infty}\frac{1}{N}\sum_{k=1}^{N}\rho((\phi(k,\omega_{i}(y))x_{i}^1)_{i\in\mathbb{Z}}, (\phi(k,\omega_{i}(y))x_{i}^2)_{i\in\mathbb{Z}})\geq\epsilon_0,$$
there exists a sequence of positive integers $\{N_j\}_{j\in\mathbb{N}}$ with $L<N_1<N_2<...$ such that 
$$\frac{1}{N_j}\sum_{k=1}^{N_j}\rho((\phi(k,\omega_{i}(y))x_{i}^1)_{i\in\mathbb{Z}}, (\phi(k,\omega_{i}(y))x_{i}^2)_{i\in\mathbb{Z}})\geq\epsilon_0/2.$$
 Note that  for each $k\in\mathbb{N}$, 
\begin{align*}
(\phi(k, \omega_{i}(y))x_{i}^1)_{i\in\mathbb{Z}}=(x^1_{k+i})_{i\in\mathbb{Z}}\quad\text{and}\quad(\phi(k, \omega_{i}(y))x_{i}^2)_{i\in\mathbb{Z}}=(x^2_{k+i})_{i\in\mathbb{Z}}.
\end{align*}
It follows that 
\begin{align*}
&\frac{1}{N_j}\sum_{k=1}^{N_j}\sum_{|i|\leq L}\frac{d(x^1_{i+k}, x_{i+k}^2)}{2^i(1+d(x^1_{i+k}, x_{i+k}^2))}\geq\frac{\epsilon_0}{2}-\frac{1}{N_j}\sum_{k=1}^{N_j}\sum_{|i|\geq L+1}\frac{d(x^1_{i+k}, x_{i+k}^2)}{2^i(1+d(x^1_{i+k}, x_{i+k}^2))}\geq\frac{\epsilon_0}{2}-\frac{\epsilon_0}{3}=\frac{\epsilon_0}{6}.
\end{align*}
For each $j\in\mathbb{N}$, there exists $i_j\in [-L,L]$ such that 
$$\sum_{k=1}^{N_j}\frac{d(x^1_{i_j+k},x^1_{i_j+k})}{1+d(x^1_{i_j+k},x^1_{i_j+k})}=\max_{|i|\leq L}\left\{\sum_{k=1}^{N_j}\frac{d(x^1_{i+k},x^1_{i+k})}{1+d(x^1_{i+k},x^1_{i+k})}\right\}.$$
Therefore,
\begin{align*}
\notag\frac{1}{N_j}\sum_{k=1}^{N_j}d(x^1_{i_j+k}, x^2_{i_j+k})&\geq\frac{1}{N_j}\sum_{k=1}^{N_j}\frac{d(x^1_{i_j+k}, x^2_{i_j+k})}{1+d(x^1_{i_j+k}, x^2_{i_j+k})}
\\&\geq\frac{1}{3}\frac{1}{N_j}\sum_{k=1}^{N_j}\sum_{|i|\leq L}\frac{d(x^1_{i+k}, x_{i+k}^2)}{2^i(1+d(x^1_{i+k}, x_{i+k}^2))}\geq \frac{\epsilon_0}{18},
\end{align*}
which  implies that  
\begin{align*}
\limsup_{N\to\infty}\sum_{i=1}^{N}d(\phi(i,\omega)x_1, \phi(i,\omega)x_2)&\geq\limsup_{j\to\infty}\frac{1}{N_j+L}\sum_{i=1}^{N_j+L}d\big(\phi(i,\omega)x_1, \phi(i,\omega)x_2\big)
\\&\geq\limsup_{j\to\infty}\frac{N_j}{N_j+L}\frac{\epsilon_0}{18}>\frac{\epsilon_0}{36}.
\end{align*}
This finishes the proof of Theorem \ref{main2}.
\end{proof}

\noindent{\bf Acknowledgments.}
The authors would like to thanks   Professor W. Huang for kindly suggestions  and Associate Professor L. Xu  for providing a brief proof of Proposition \ref{21-03-06-02}.  C. Liu and J. Zhang are partially supported by NNSF of China (12090012).  F. Tan is supported by NSF of China (11871228).


\end{document}